\newtheorem{theorem}{Theorem}[section]
\newtheorem{lemma}[theorem]{Lemma}
\newtheorem{proposition}[theorem]{Proposition}
\newtheorem{corollary}[theorem]{Corollary}
\newtheorem{remark}[theorem]{Remark}
\newtheorem{definition}[theorem]{Definition}
\newtheorem{question}[theorem]{Question}
\DeclareMathOperator{\loc}{loc}
\DeclareMathOperator{\dist}{dist}
\DeclareMathOperator{\diam}{diam}
\DeclareMathOperator{\Ric}{Ric}
\DeclareMathOperator{\Id}{Id}
\DeclareMathOperator{\dvoll}{dvol}
\DeclareMathOperator{\spec}{spec}
\DeclareMathOperator{\im}{im}
\DeclareMathOperator{\Hess}{Hess}
\DeclareMathOperator{\grad}{grad}
\DeclareMathOperator{\rank}{rank}
\newcommand{\R}{\mathbb R}
\newcommand{\N}{\mathbb N}
\newcommand{\CB}{\mathcal{B}}
\newcommand{\CC}{\mathcal{C}}
\newcommand{\CG}{\mathcal{G}}
\newcommand {\RL}{\mathrm L}
\newcommand{\RW}{\mathrm W}
\newcommand{\RC}{\mathrm C}
\renewcommand{\le}{\leqslant}
\renewcommand{\ge}{\geqslant}
\renewcommand{\phi}{\varphi}
\renewcommand{\S}{\mathbb S}
\newcommand{\dvol}{\,\dvoll}
\newcommand{\bo}\boldsymbol{}
\newcommand{\class}{\mathscr{S}}
\title[Canonical foliation of bubblesheets]{Canonical foliation of bubblesheets}
\author{Jean Lagac\'{e}}
\address{Department of Mathematics, King's College London, Strand, London, WC2R 2LS, UK}
\email{jean.lagace@kcl.ac.uk}
\author{Stephen Lynch}
\email{stephen.lynch@kcl.ac.uk}
\begin{document}

\begin{abstract}
    We introduce a new curvature condition for high-codimension submanifolds of a Riemannian ambient space, called quasi-parallel mean curvature (QPMC). The class of submanifolds with QPMC includes all CMC hypersurfaces and submanifolds with parallel mean curvature. We use our notion of QPMC to prove that certain kinds of high-curvature regions which appear in geometric flows, called bubblesheets, can be placed in a suitable normal form. This follows from a more general result asserting that the manifold $\R^k \times \S^{n-k}$, equipped with any metric which is sufficiently close to the standard one, admits a canonical foliation by embedded $(n-k)$-spheres with QPMC.    
\end{abstract}

\maketitle

\section{Introduction}

At the core of differential geometry is the notion that the important features of a space should remain invariant under changes of coordinates. However a space with some special structure may admit preferred coordinate systems, which reveal its features with particular clarity. A proven method for finding such distinguished parameterizations is to identify a foliation of the space by submanifolds which are canonically determined by its geometry.

Foliations by hypersurfaces with constant mean curvature (CMC) have been used to parameterize the ends of asymptotically flat manifolds, leading to a definition of center of mass for isolated gravitating systems \cite{Huisken--Yau} (see also the more recent \cite{Cederbaum--Sakovich} and references therein). In the Lorentzian setting, a foliation by maximal (vanishing mean curvature) hypersurfaces played a role in the first proof of the stability of the Minkowski spacetime \cite{Christodoulou--Klainerman}. In yet another direction, a canonical CMC foliation for `necks' (Riemannian manifolds which locally resemble $\R \times \S^{n-1}$ at some scale), constructed by Hamilton, has been employed to continue geometric flows through neck singularities via surgery \cite{Hamilton_PIC, Huisk-Sin09}. 

In this paper we construct a new canonical foliation for another kind of geometry which forms near singularities of both the Ricci flow and the mean curvature flow. These geometries, called bubblesheets, locally resemble $\R^k \times \S^{n-k}$ at some scale, where $k\geq 2$. Bubblesheets are expected to be the most prevalent high-curvature regions in the mean curvature flow of hypersurfaces of dimension $n \geq 3$, and in the Ricci flow on manifolds of dimension $n \geq 4$. The essential question we address is the following: 

\begin{question}
Consider the manifold $\mathbb{R}^k \times \S^{n-k}$ equipped with the standard product metric $g_0 := g_{\R^k} \oplus g_{\S^{n-k}}$. This space is foliated by round, totally geodesic $(n-k)$-spheres, namely the slices $\{z\}\times\S^{n-k}$. If $g_0$ is deformed to a nearby metric, can these totally geodesic spheres be deformed into a canonical foliation of the resulting space?
\end{question}

The foliation we seek is by spheres of codimension $k \geq 2$ which, by analogy with Hamilton's CMC foliation for $k =1$, ought to solve some equation involving their mean curvature. The main challenge is to determine the right equation, or in this case \emph{system} of equations, since our spheres have codimension greater than one. One might seek a foliation by leaves whose mean curvature has constant length, or is parallel in the normal bundle. Neither of these conditions works; in a certain sense they are underdetermined and overdetermined respectively. The correct approach, we argue, is to foliate by spheres which have quasi-parallel mean curvature (QPMC). This new curvature condition is more general than having parallel mean curvature, but almost as restrictive (as it must be if our foliation is to be canonical).

To see what we mean by QPMC, fix a Riemannian manifold $(M,g)$ of dimension $n$ and consider a submanifold $\Sigma \subset M$ of codimension $k$. The Levi-Civita connection of $(M,g)$ induces a connection $\nabla^\perp$ and hence a Laplacian $\Delta^\perp$ acting on sections of the normal bundle $N\Sigma$. The operator $-\Delta^\perp$ admits a complete set of eigensections with nonnegative eigenvalues $\lambda_m = \lambda_m(\Sigma, g)$, labelled in nondecreasing order, repeating according to multiplicity. We write $P_\lambda$ for the $\RL^2$-orthogonal projection onto the eigenspace associated with $\lambda \in \spec(-\Delta^\perp)$ and define 
    \[Q := \sum_{\substack{
    \lambda \in \spec(-\Delta^\perp) \\ \lambda < \lambda_{k+1}
    }} P_{\lambda}.\]
A section of $N\Sigma$ is called \textbf{quasi-parallel} if it lies in $\im(Q)$, and so the submanifold $\Sigma$ is said to have QPMC if its mean curvature vector $H$ satisfies
    \begin{equation}\label{QPMC}
    (1 - Q)(H) = 0.
    \end{equation}

We note that $1-Q$ is a nonlocal pseudodifferential operator acting on sections of $N\Sigma$. Expressed as $H = Q(H)$, \eqref{QPMC} is a weakly elliptic quasilinear system for the position of the submanifold, with nonlocal right-hand side. From a variational point of view, $\Sigma$ has QPMC if and only if $\Sigma$ is a critical point for the volume functional with respect to variations whose initial velocity lies in $\ker(Q)$.

Let us further clarify the meaning of \eqref{QPMC} with a few remarks: 
\begin{itemize}
    \item If $\Sigma$ is of codimension 1 and 2-sided, then $Q$ is the projection onto the 1-dimensional space of harmonic sections of $N\Sigma$ (normal vectors of constant length). Therefore, in this setting $\Sigma$ has CMC if and only if it has QPMC.
    \item Since $\Sigma$ is compact, $\ker(\Delta^\perp)$ is precisely the space of parallel sections of $N\Sigma$. In particular, if $H$ is parallel in $N\Sigma$, meaning $\nabla^\perp H =0$, then $\Sigma$ has QPMC.
    \item When $k \geq 2$, for a general submanifold $\Sigma$, the normal bundle $N\Sigma$ may admit no global nonzero parallel sections. However, if $\lambda_k < \lambda_{k+1}$ (which is true generically), then $\im(Q)$ is $k$-dimensional. In a sense which can be made precise using Rayleigh quotients, $\im(Q)$ is the $k$-dimensional subspace of normal vector fields which are `as close as possible' to being parallel.
\end{itemize}
    
We now state our first main result. Recall the notation $g_0 = g_{\R^k} \oplus g_{\S^{n-k}}$.

\begin{theorem}\label{main entire}
Let $g$ be a smooth Riemannian metric on $M = \R^k \times \S^{n-k}$ and fix constants $\ell \geq 3$ and $\gamma \in (0,1)$. If $\delta>0$ is sufficiently small depending on $n$, $\ell$ and $\gamma$ then there exists a positive constant $\varepsilon = \varepsilon(n,\ell,\gamma,\delta)$ with the following property. If $\|g - g_0\|_{\RC^{\ell-1, \gamma}} \leq \varepsilon$ then $M$ admits a unique smooth foliation by embedded $(n-k)$-spheres, each of which has QPMC and is $\delta$-close to some $\{z\}\times\S^{n-k}$ in the graphical $\RC^{\ell,\gamma}$-sense.
\end{theorem}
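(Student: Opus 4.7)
The plan is to parameterize candidate leaves as normal graphs over the standard slices and solve the QPMC equation for the graph function via the implicit function theorem. For $z \in \R^k$ and small $u \in \RC^{\ell,\gamma}(\S^{n-k}; \R^k)$, let $\Sigma_{z,u}$ denote the image of the embedding $x \mapsto (z + u(x), x)$, and identify $N\Sigma_{z,u}$ with $\R^k$ via projection onto the first factor (a linear isomorphism for $u$ small). Write $F(z,u,g) := (1-Q)H$ for the QPMC map, where $H$ is the mean curvature vector of $\Sigma_{z,u}$ in $(M,g)$ and $Q$ is the associated quasi-parallel projection. The delicate piece is smooth dependence of the spectral projection $Q$ on $(z,u,g)$: I would represent it as a Riesz projection $Q = \frac{1}{2\pi i}\oint_\Gamma (\zeta + \Delta^\perp)^{-1}\,d\zeta$ for a contour $\Gamma \subset \C$ separating the lowest $k$ eigenvalues from the rest, and invoke standard analytic perturbation theory for the family $\{-\Delta^\perp_{z,u,g}\}$. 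This is legitimate provided the spectral gap $\lambda_k < \lambda_{k+1}$ persists, which holds for small $\|g-g_0\|$: at the reference point the spectrum of $-\Delta^\perp$ on $\R^k$-valued functions on $\S^{n-k}$ is $\{0, n-k, 2(n-k+1),\ldots\}$ with $0$ of multiplicity exactly $k$.

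At the base point $(g_0, z, 0)$ the slice is totally geodesic, $H$ vanishes, and the product structure kills all ambient curvature terms mixing $\R^k$ and $\S^{n-k}$, so the Jacobi operator reduces to the componentwise scalar Laplacian $\Delta$ on $\R^k$-valued functions on $\S^{n-k}$. Because $H = 0$ at the reference point, variations of $Q$ do not enter the first-order expansion of $F$, yielding the linearization $D_uF|_{(g_0, z, 0)}(v) = (1-Q_0)\Delta v = \Delta v$ (the second equality because $\Delta v$ is $\RL^2$-orthogonal to constants). The kernel of this operator is precisely the $k$-dimensional space of constants, reflecting the obvious $\R^k$-translation symmetry. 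I would quotient out this symmetry by restricting to the zero-mean subspace $\RC^{\ell,\gamma}_0 := \{u \in \RC^{\ell,\gamma}(\S^{n-k};\R^k) : \int_{\S^{n-k}} u = 0\}$, on which $\Delta$ is an isomorphism onto $\RC^{\ell-2,\gamma}_0$. The implicit function theorem then produces, for each $z \in \R^k$ and each $g$ with $\|g-g_0\|_{\RC^{\ell-1,\gamma}} \le \varepsilon$, a unique small solution $u(z,g) \in \RC^{\ell,\gamma}_0$ with $u(z, g_0) \equiv 0$ and $\|u(z,g)\|_{\RC^{\ell,\gamma}} \to 0$ as $\varepsilon \to 0$. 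Uniformity in $z$ of this smallness is automatic from translation invariance in $\R^k$ up to the globally controlled error $g - g_0$.

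Setting $\Phi(z,x) := (z + u(z,g)(x), x)$ produces a small $\RC^{\ell,\gamma}$-perturbation of the identity $\R^k\times\S^{n-k} \to M$, hence a proper local diffeomorphism and therefore a global diffeomorphism; the images $\Phi(\{z\}\times\S^{n-k})$ are then the leaves of the desired foliation, each $\delta$-close to a standard slice once $\varepsilon$ is small enough in terms of $\delta$. Uniqueness of the foliation reduces to uniqueness in the IFT: any QPMC sphere sufficiently close to some $\{z_0\}\times\S^{n-k}$ is graphical over a unique $\{z\}\times\S^{n-k}$ with zero-mean graph function, which must coincide with $u(z,g)$. I expect the main obstacle to be verifying smooth dependence of the nonlocal projection $Q$ on the data, since $Q$ is defined through spectral information rather than as a differential operator, and ensuring this dependence is uniform in $z$; this is where the Riesz representation and the persistence of the spectral gap do the real work. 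A secondary technical point is the bookkeeping required to treat the family of normal bundles $\{N\Sigma_{z,u}\}$ within one fixed Banach space, which the fibrewise projection trivialisation resolves in a routine manner.
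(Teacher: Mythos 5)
Your strategy is the right one and matches the paper's in outline (graph parameterization, implicit function theorem with linearization $\Delta_{\S^{n-k}}$ on the mean-zero subspace, translation invariance to handle all $z$, and assembly into a foliation), but there is a genuine gap at the center of your implicit function theorem argument: you dispose of the $k$-dimensional kernel of the linearization by restricting the domain to the zero-mean subspace, but you do not address the matching $k$-dimensional cokernel. You claim that restricting to $\RC^{\ell,\gamma}_0$ makes $\Delta$ an isomorphism onto $\RC^{\ell-2,\gamma}_0$, which is true, but you then need the \emph{nonlinear} map $u \mapsto (1-Q)H$, expressed in your trivialisation, to actually take values in $\RC^{\ell-2,\gamma}_0$; otherwise the IFT cannot be applied with codomain $\RC^{\ell-2,\gamma}_0$, and with codomain $\RC^{\ell-2,\gamma}$ the linearization is not surjective. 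Identifying $N\Sigma_{z,u}$ with $\R^k$ via projection onto the first factor does not give this: the $\RL^2$-orthogonality $\langle (1-Q)H, V\rangle_{\RL^2(N\Sigma_u)} = 0$ for $V \in \im(Q)$ refers to the induced volume form on $\Sigma_u$ and to the metric $g$, and $\im(Q)$ does not consist of constant $\R^k$-valued maps for generic $(g,u)$, so after your trivialisation the components of $(1-Q)H$ will not have mean zero over $\S^{n-k}$.

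The paper solves exactly this point by defining
    \[J^a(g,u) := g\big((1-Q_{g,u})(H_{g,u}),\, E_a^{g,u}\big)\, f_{g,u},\]
where $E_a^{g,u} := Q_{g,u}(N_a^{g,u})$ is a \emph{quasi-parallel} frame for $N\Sigma_u$ (well-defined for $(g,u)$ near $(g_0,0)$ by Lemma~\ref{E is a frame}) and $f_{g,u}\,\dvoll_{\S^{n-k}} = \dvoll_{g,u}$ is the volume-form ratio. With these factors in place one has $\int_{\S^{n-k}} J^a \,\dvoll_{\S^{n-k}} = \langle (1-Q)H, E_a\rangle_{\RL^2(N\Sigma_u)} = 0$ identically, because $E_a \in \im(Q)$; so $J$ genuinely maps $\mathring{\RC}^{\ell,\gamma}$ into $\mathring{\RC}^{\ell-2,\gamma}$ and the IFT applies with an invertible linearization $\Delta_{\S^{n-k}}$. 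This weight and frame are what you are missing. To repair your proof you should either adopt this definition of $J$ or carry out a Lyapunov--Schmidt reduction and separately verify the $k$-dimensional compatibility condition $\pi_0 F(z,u,g)=0$; the latter amounts to rediscovering the same orthogonality identity. Two smaller remarks: your Riesz-projection argument for analyticity of $Q$ is a reasonable alternative to the paper's Lemma~\ref{eigensection regularity}, though you still need to check the resolvent is bounded on $\RC^{\ell-2,\gamma}$ (not just $\RL^2$); and the theorem asks for a \emph{smooth} foliation, so you also need an elliptic-regularity bootstrap analogous to Proposition~\ref{regularity} to upgrade the $\RC^{\ell,\gamma}$ solutions to $\RC^\infty$ when $g$ is smooth.
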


\begin{remark}
In Section~\ref{section example} we provide examples of metrics on $\R^2 \times \S^1$ which admit a QPMC foliation by Theorem~\ref{main entire}, but do not admit a foliation whose leaves have parallel mean curvature. These examples are obtained from bumpy perturbations of almost collapsed Berger metrics on $\S^3$. 
\end{remark}

Using Theorem~\ref{main entire}, we produce a canonical QPMC foliation for more general geometries which resemble $\R^k \times \S^{n-k}$ only locally and after rescaling. Before stating this result we introduce some definitions. Let $(M,g)$ be a complete Riemannian $n$-manifold. The following notion of an $(\varepsilon, L, n-k)$-cylindrical region makes precise what we informally refer to as a bubblesheet. 

\begin{definition}\label{cylindrical region}
An open subset $\CC \subset M$ is $(\varepsilon, L, n-k)$-cylindrical if for each $p \in \CC$ there exists a scale $r(p) > 0$ and an embedding
    \[F_p : B^k(0, L) \times \S^{n-k} \to M\]
such that $F_p^*(r(p)^{-2}g)$ is $\varepsilon$-close to $g_0$ in $\RC^{[1/\varepsilon]}$.
\end{definition}

We will also need to refer to the following class of submanifolds of $M$.

\begin{definition}\label{delta vertical}
Let $\Sigma \subset M$ be a closed submanifold. We introduce a scale $\bar r$ defined so that $\pi \bar r$ is the average injectivity radius of $(M,g)$ over points in $\Sigma$. We say that $\Sigma$ is $\delta$-vertical if $\diam_g(\Sigma)/\bar r \leq 10\pi$ and 
    \[\bar r|A_\Sigma| + \bar r^2 |\nabla^\perp A_\Sigma| + \bar r^3 |\nabla^\perp \nabla^\perp A_\Sigma| \leq \delta,\]
where $A_\Sigma$ is the second fundamental form and $\nabla^\perp$ is the induced connection on $T^*\Sigma \otimes \dots \otimes T^*\Sigma \otimes N\Sigma$.
\end{definition}

Definitions \ref{cylindrical region} and \ref{delta vertical} are intended so that if $\Sigma$ is a $\delta$-vertical $(n-k)$-sphere embedded in an $(\varepsilon, L, n-k)$-cylindrical region and $\varepsilon$ is small, then $\delta$ controls how far $\Sigma$ is from a vertical sphere in $\R^k\times\S^{n-k}$ after pulling back. Note also that for $p \in \Sigma$ we have $\bar r/r(p) \to 1$ as $\varepsilon \to 0$. 

We now state our second main result. 

\begin{theorem}\label{main local}
Fix a constant $L \geq 1000$. Let $(M,g)$ be a complete Riemannian manifold and suppose $\CC \subset M$ is $(\varepsilon, L, n-k)$-cylindrical. If $\varepsilon$ is sufficiently small depending on $n$ and $\delta$ then there exists an open subset 
    \[\CC \subset \CB \subset M\]
which is foliated by $\delta$-vertical $(n-k)$-spheres with QPMC. Moreover, every embedded $(n-k)$-sphere in $M$ which is $\delta$-vertical, has QPMC and intersects $\CC$ is a leaf of this foliation. 
\end{theorem}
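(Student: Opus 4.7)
The plan is to reduce Theorem~\ref{main local} to the global result Theorem~\ref{main entire} by working chart-by-chart via the cylindrical embeddings $F_p$, and to glue the resulting local foliations using the uniqueness clause of Theorem~\ref{main entire}. For each $p \in \CC$, the pullback $g_p := F_p^*(r(p)^{-2} g)$ is $\varepsilon$-close to $g_0$ in $\RC^{[1/\varepsilon]}$ on $B^k(0,L) \times \S^{n-k}$. Fix a radial cutoff $\chi : \R^k \to [0,1]$ equal to $1$ on $B^k(0,L/2)$ and supported in $B^k(0,3L/4)$, and define a global metric $\tilde g_p := \chi g_p + (1-\chi) g_0$ on $\R^k \times \S^{n-k}$. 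Since $L$ is a fixed large constant, the derivatives of $\chi$ are controlled independently of $\varepsilon$, so $\|\tilde g_p - g_0\|_{\RC^{\ell-1,\gamma}} \le C(L,\ell,\gamma) \varepsilon$. For $\varepsilon$ sufficiently small, Theorem~\ref{main entire} then furnishes a canonical QPMC foliation $\CF_p$ of $(\R^k \times \S^{n-k}, \tilde g_p)$ whose leaves are graphically close to the vertical slices $\{z\} \times \S^{n-k}$.

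Keep only those leaves of $\CF_p$ that meet the inner zone $B^k(0, L/4) \times \S^{n-k}$; by their small diameter each is entirely contained in $B^k(0, L/2) \times \S^{n-k}$, where $\tilde g_p = g_p$. Pushing forward by $F_p$ produces a local foliation of a neighborhood $U_p \subset M$ by $\delta$-vertical QPMC spheres of $(M,g)$. Define $\CB := \bigcup_{p \in \CC} U_p$; this is an open set containing $\CC$. Compatibility of the local foliations on overlaps is precisely where the uniqueness part of Theorem~\ref{main entire} enters: given $x \in U_p \cap U_q$, the leaf $\Sigma$ of $\CF_p$ through $x$ is a $\delta$-vertical QPMC sphere in $(M,g)$ whose pullback $F_q^{-1}(\Sigma)$ lies in the region where $\tilde g_q = g_q$, is QPMC with respect to $\tilde g_q$ (since QPMC is intrinsic and depends only on the metric in a neighborhood of the submanifold), and is graphically close to some vertical slice because $g_q$ is close to $g_0$ and $\Sigma$ is $\delta$-vertical intrinsically. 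Uniqueness in Theorem~\ref{main entire} identifies $F_q^{-1}(\Sigma)$ with the corresponding leaf of $\CF_q$, so the local foliations patch together.

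The same uniqueness principle yields the final assertion. Given any embedded $\delta$-vertical QPMC sphere $\Sigma \subset M$ meeting $\CC$, pick $p \in \Sigma \cap \CC$. The bound $\diam_g(\Sigma) \le 10\pi \bar r$ together with $\bar r/r(p) \to 1$ as $\varepsilon \to 0$ places $F_p^{-1}(\Sigma)$ inside the inner $p$-zone, where $g_p = \tilde g_p$. The intrinsic curvature bounds of Definition~\ref{delta vertical}, transferred to the chart via the closeness of $g_p$ to $g_0$, upgrade to graphical $\RC^{\ell,\gamma}$-closeness of $F_p^{-1}(\Sigma)$ to some $\{z\} \times \S^{n-k}$ through a standard inverse function theorem argument. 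Thus $F_p^{-1}(\Sigma)$ is among the leaves of $\CF_p$ produced by Theorem~\ref{main entire}, and $\Sigma$ itself is a leaf of our foliation.

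The principal technical obstacle lies in the two conversions between intrinsic and ambient geometric data. First, one must establish that the cutoff extension sends $\varepsilon$-closeness in $\RC^{[1/\varepsilon]}$ on the ball to $C(L,\ell,\gamma)\varepsilon$-closeness in $\RC^{\ell-1,\gamma}$ globally, with constant depending only on $L$, $\ell$, $\gamma$; this exploits the high regularity exponent $[1/\varepsilon]$ built into Definition~\ref{cylindrical region} together with standard interpolation. Second, one must show that intrinsic $\delta$-verticality of $\Sigma$ translates, in a chart where the ambient metric is nearly $g_0$, into the graphical $\RC^{\ell,\gamma}$-closeness condition required by Theorem~\ref{main entire}. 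Once these two passages are controlled, the existence, gluing, and uniqueness statements all follow formally from Theorem~\ref{main entire}, since the QPMC condition is preserved under isometric pullback and depends only on the metric in a neighborhood of the submanifold.
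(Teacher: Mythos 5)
Your proposal is correct and follows essentially the same architecture as the paper's proof: work in the cylindrical charts $F_p$, extend the pulled-back metric to a global metric on $\R^k\times\S^{n-k}$, invoke the global existence and uniqueness to produce local QPMC foliations, and glue overlaps via uniqueness. The only packaging difference is that you apply Theorem~\ref{main entire} directly with an explicit cutoff interpolation $\tilde g_p = \chi g_p + (1-\chi)g_0$, whereas the paper factors the same content through an intermediate local statement (Proposition~\ref{local foliation}) together with the verticality-to-graphical conversion lemmas of Section~\ref{section uniqueness} (Lemmas~\ref{vertical implies graphical}--\ref{extrinsic uniqueness}), which are exactly the two technical passages you identify as the remaining work to be done.
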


Theorem~\ref{main entire} implies the existence of a local QPMC foliation around any point in the region $\CC$ referred to in Theorem~\ref{main local}. The extra step required to prove the theorem is to show that these local foliations agree where they overlap, and hence combine to give a global foliation which covers all of $\CC$. This overlapping property is established via uniqueness results for QPMC leaves, which we prove in Section~\ref{section uniqueness}.

\subsection{Outline} After fixing notation in Section~\ref{notation section}, we compute in Section~\ref{section variations} the first variation of $(1-Q)(H)$ for a general submanifold of a Riemannian ambient space. Theorem~\ref{main entire} is proven using the implicit function theorem in Section~\ref{section existence}. We then prove Theorem~\ref{main local} in Section~\ref{section uniqueness}. In Section~\ref{section example} we demonstrate by example that certain metrics on $\R^2 \times \S^1$ do not admit a foliation by spheres with parallel mean curvature. Finally, in Section~\ref{section MCF}, we describe how bubblesheet regions occur close to singularities of the mean curvature flow, and show that Theorem~\ref{main local} can be used to cast these in a canonical normal form.


\section{Notation}\label{notation section}

Let $(M, g)$ be a Riemannian manifold. We write $\nabla$ for the Levi-Civita connection determined by $g$.  Our convention for the Riemann curvature tensor of $(M,g)$ is 
    \[R(U,V)W = \nabla_U \nabla_V W - \nabla_V\nabla_U W.\]

Let $\Sigma \subset M$ be a closed submanifold of codimension $k$. We introduce some standard notation and terminology concerning the geometry of $\Sigma$. First recall that for each $p \in \Sigma$, the tangent space $T_p M$ splits as 
    \[T_p M = T_p \Sigma \oplus (T_p \Sigma)^\perp = T_p \Sigma \oplus N_p \Sigma,\]
and this gives rise to the tangent bundle $T\Sigma$ and normal bundle $N\Sigma$. For a vector $V \in T_p M$, we write $V^\top$ and $V^\perp$ for the projections of $V$ onto $T_p\Sigma$ and $N_p\Sigma$ respectively. 

We use $h$ to denote the metric induced by $g$ on $\Sigma$. That is, 
    \[h(X, Y) = g(X, Y)\]
for $X$ and $Y$ tangent to $\Sigma$. The connection $\nabla$ induces connections $\nabla^\top$ and $\nabla^\perp$ on $T\Sigma$ and $N\Sigma$ respectively, via the formulae
\begin{equation}
    \nabla^\top_X Y := (\nabla_X Y)^\top \qquad \text{and} \qquad \nabla^\perp_X V := (\nabla_X V)^\perp.
\end{equation}
Note that $\nabla^\top$ is the Levi-Civita connection on $T\Sigma$ determined by $h$. The second fundamental form of $\Sigma$ acts on tangent vectors $X$ and $Y$  by
    \[A(X, Y) = (\nabla_{X} Y)^\perp.\]
If $X_i$ is a locally defined smooth frame for $T\Sigma$, then for $V \in \RC^\infty(N\Sigma)$ we have
    \[(\nabla_{X_i} V)^\top = -g(A_i^j, V)X_j,\]
and hence
    \[\nabla^\perp_i V = \nabla_{X_i} V + g(A^j_i, V) X_j,\]
where $A_i^j = h^{jk}A_{ki}$, $h^{jk}$ being the matrix inverse to $h_{jk}$. 

\subsection{The normal Laplacian} We define a Laplacian $\Delta^\perp : \RC^\infty(N\Sigma) \to \RC^\infty(N\Sigma)$ by
    \[\Delta^\perp  := -\nabla^{\perp*}\nabla^\perp,\]
where $\nabla^{\perp*} : \RC^\infty(T^*\Sigma \otimes N\Sigma) \to \RC^\infty(N\Sigma)$ is the $\RL^2$-adjoint of $\nabla^\perp$. With respect to any local frame $X_i$ for $T\Sigma$ we have 
    \[\Delta^\perp V = h^{ij}\left(\nabla^\perp_i(\nabla^\perp_j V) - \nabla^\perp_{\nabla^\top_i X_j} V\right).\]
The operator $\Delta^\perp$ is essentially self-adjoint on its domain, the Sobolev space $\RW^{2,2}(N\Sigma)$. Since $-\Delta^\perp$ is a positive elliptic operator on a compact manifold, it has compact resolvent, and so its spectrum is comprised of eigenvalues; these form a nondecreasing
sequence 
\begin{equation}
\label{eq:eigenvalues_increasing}
    0 \le \lambda_1(\Sigma,g) \le \lambda_2(\Sigma,g) \le \lambda_3(\Sigma,g) \le \dotso \nearrow \infty.
\end{equation}
To each $\lambda \in \spec(-\Delta^\perp)$ we have an associated $\RL^2$-projection $P_\lambda$ and eigenspace $\im(P_\lambda)$.
Moreover, we have the spectral decomposition
\begin{equation}
    -\Delta^\perp = \sum_{\lambda \in \spec(-\Delta^\perp)} \lambda P_\lambda.
\end{equation}
We note that
\begin{equation}
    \dim(\ker(\Delta^\perp)) \le k,
\end{equation}
with equality if and only if $N\Sigma$ admits a global frame consisting of parallel sections. 

For $m \in \N$, let $\mathbf{Gr}_m$ be the Grassmannian of 
$m$-dimensional subspaces of $\RC^\infty(N\Sigma)$. The eigenvalues of $-\Delta^\perp$ are characterized variationally by
\begin{equation}
    \lambda_m(\Sigma,g) = \min_{E \in \mathbf{Gr}_m} \max_{V \in E, \, \|V\|_{\RL^2} = 1} \int_\Sigma |\nabla^\perp V|^2 \dvoll_\Sigma,
\end{equation}
and any normal vector field $V$ which realises this minmax is an eigensection of $-\Delta^\perp$. It follows from this variational characterization that $\lambda_m(\Sigma,g)$ is piecewise smooth (and smooth away from points at which its multiplicity changes)
with respect to $\Sigma$ and $g$. 

\subsection{Quasi-parallel sections} 
For a general submanifold $\Sigma$, the normal bundle $N\Sigma$ may not admit any nonzero sections which are parallel with respect to $\nabla^\perp$. Therefore, in place of parallel sections, we work with a class of sections which behave almost as well and are far more abundant. These we call quasi-parallel. Recalling that $k$ is the codimension of $\Sigma$ in $M$, we define 
    \[Q := \sum_{\substack{\lambda \in \spec(-\Delta^\perp) \\ \lambda < \lambda_{k+1}}} P_\lambda,\]
and say that a section of $N\Sigma$ is quasi-parallel if it lies in $\im(Q)$.

Concerning this definition, a few remarks are in order. Note first that 
    \[\dim(\im(Q)) \leq k\]
with equality if and only if $\lambda_k < \lambda_{k+1}$. Moreover, for a generic submanifold $\Sigma$ and any $m$, the eigenvalue $\lambda_m$ is simple. Appealing to this fact in the case $m = k$ we see that for a generic $\Sigma$ we have $\lambda_k < \lambda_{k+1}$. Since this is an open condition, $Q$ has constant rank equal to $k$ on a dense open subset of the compact $(n-k)$-dimensional submanifolds of $M$. 

Next we observe that $Q$ varies smoothly while $\Sigma$ varies smoothly with $\lambda_k < \lambda_{k+1}$, using the variational characterisation of $\im(Q)$, which is as follows. Suppose $Q$ has rank $k$, i.e. $\lambda_k < \lambda_{k+1}$, and consider the function
    \[(V_1, \dots, V_k) \mapsto \sum_{j=1}^k \int_\Sigma |\nabla^\perp V_j|^2 \dvoll_\Sigma\]
acting on $\RL^2$-orthonormal collections of sections $V_1, \dots, V_k \in \RC^\infty(N\Sigma)$. This function is smooth and depends only on the subspace spanned by $V_1, \dots, V_k$, so it lifts to a smooth function defined on $\mathbf{Gr}_k$. Whenever $\lambda_k < \lambda_{k+1}$, the resulting function attains its unique global minimum at $\im(Q)$, and this minimum is nondegenerate. Therefore, $\im(Q)$ traces out a smooth path in $\mathbf{Gr}_k$ as $\Sigma$ varies smoothly. 

Finally, let us point out that $Q$ has rank $k$ if $N\Sigma$ admits a global frame consisting of parallel sections, or if $\Sigma$ is a perturbation of such a submanifold. This is, for example, the case when $M = \mathbb{R}^k\times\S^{n-k}$ with the metric $g_0$ and $\Sigma$ is a vertical sphere $\{z\}\times\S^{n-k}$, which is the setting with which we are principally concerned. Indeed, we then have 
    \[\lambda_1 = \dots =\lambda_k =0, \qquad \lambda_{k+1} = n-k,\]
so $\lambda_k < \lambda_{k+1}$ remains true for perturbations of $\Sigma$ and, we note, in this perturbed regime,
    \[Q = \sum_{\substack{\lambda \in \spec(-\Delta^\perp) \\ \lambda < \frac{n-k}{2}}} P_\lambda.\]

\subsection{Regularity of eigensections}

With minor modifications, all of the definitions in this section still make sense if $g$ is of class $\RC^{2,\gamma}$ and $\Sigma$ is of class $\RC^{3,\gamma}$. For later use, we record here a regularity result for the eigensections of $-\Delta^\perp$ under these hypotheses. 

\begin{lemma}\label{eigensection regularity}
Suppose $g$ is of class $\RC^{\ell-1,\gamma}$ and $\Sigma$ is of class $\RC^{\ell,\gamma}$, where $\ell \geq 3$ and $\gamma \in (0,1)$. The eigensections of $-\Delta^\perp$ are then of class $\RC^{\ell-1,\gamma}$. 
\end{lemma}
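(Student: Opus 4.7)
The proof is a straightforward elliptic regularity bootstrap, made slightly delicate by the need to track the regularity of three different scales of coefficient in the operator $\Delta^\perp$.

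First I would work locally: choose a $\RC^{\ell,\gamma}$ parametrization $F : U \to \Sigma$ and construct a local frame $\{e_\alpha\}_{\alpha=1}^k$ for $N\Sigma$, orthonormal with respect to $g$, of class $\RC^{\ell-1,\gamma}$ (for instance by applying Gram--Schmidt to a fixed basis of $TM|_{F(U)}$ projected onto $N\Sigma$ using $g$). In this trivialization, writing $V = V^\alpha e_\alpha$, the equation $-\Delta^\perp V = \lambda V$ takes the form of the linear elliptic system
$$-h^{ij}\partial_i \partial_j V^\alpha + b^{\alpha\beta,i}\partial_i V^\beta + c^{\alpha\beta} V^\beta = \lambda V^\alpha.$$
Tracking regularities: the induced metric $h = F^*g$ and its inverse are of class $\RC^{\ell-1,\gamma}$; the first-order coefficients $b$ depend on first derivatives of the frame and on the ambient Christoffel symbols, so they are of class $\RC^{\ell-2,\gamma}$; and the zero-order coefficients $c$ involve one further derivative of the connection coefficients, hence are of class $\RC^{\ell-3,\gamma}$.

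The bootstrap itself is standard. An eigensection is a priori in $\RW^{1,2}$ by the variational definition, so $\RL^2$ elliptic theory places $V$ in $\RW^{2,2}_{\loc}$, and Sobolev embedding then puts $V$ in $\RC^{0,\alpha}_{\loc}$ for some $\alpha \in (0,1)$. Applying the interior Schauder estimate for linear elliptic systems with $k = 0$, using $c \in \RC^{0,\gamma}$ (which holds because $\ell \ge 3$) and $\lambda V$ as the right-hand side, improves $V$ to $\RC^{2,\gamma}_{\loc}$. Iterating Schauder, each application gains two derivatives of regularity for $V$, until we reach the ceiling $\RC^{\ell-1,\gamma}_{\loc}$ set by the coefficient $c$. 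Covering $\Sigma$ by finitely many such charts yields the global statement.

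The only real obstacle is the bookkeeping. Because $\Delta^\perp$ is naturally written in non-divergence form in a local frame, the regularity of the final result is limited by the zero-order coefficient of the operator rather than the leading one, and it is precisely the class $\RC^{\ell-3,\gamma}$ of $c$ that dictates the output class $\RC^{\ell-1,\gamma}$ of the eigensection. This also explains the hypothesis $\ell \ge 3$: it is the minimum that ensures the operator has Hölder-continuous coefficients, so that classical Schauder theory applies.
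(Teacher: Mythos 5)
Your overall strategy is sound and the coefficient bookkeeping ($h \in \RC^{\ell-1,\gamma}$, $b \in \RC^{\ell-2,\gamma}$, $c \in \RC^{\ell-3,\gamma}$) matches the paper exactly, but the route you take to get started is genuinely different and has a small gap. The paper does not bootstrap an a priori $\RW^{1,2}$ eigensection directly. Instead it approximates $\Sigma$ by \emph{smooth} submanifolds $\Sigma_s$ converging in $\RC^{\ell,\gamma}$, uses spectral perturbation to follow the relevant eigenspace, and then observes that on a smooth $\Sigma_s$ the eigensections are already smooth, so one needs only a \emph{uniform a priori $\RC^{\ell-1,\gamma}$ bound} via Schauder (fed by a uniform $\sup|\tilde U|$ bound obtained from $\Delta|\tilde U| \ge -\tilde\lambda|\tilde U|$ and Moser iteration). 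This sidesteps any question of low initial regularity.

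The gap in your version is in the opening two steps. First, $\RW^{2,2} \hookrightarrow \RC^{0,\alpha}$ on a manifold of dimension $m = n-k$ requires $m \le 3$; for general $n-k$ you need an $\RL^p$ bootstrap (Calder\'on--Zygmund estimates, $\RW^{2,p_1} \hookrightarrow \RL^{p_2}$, iterate) before any H\"older regularity appears. Second, and more seriously, your first Schauder application takes $V \in \RC^{0,\alpha}$ and concludes $V \in \RC^{2,\gamma}$; but the right-hand side of the non-divergence system contains the term $b^{\alpha\beta,i}\partial_i V^\beta$, which is not even a classical function when $V$ is merely $\RC^{0,\alpha}$. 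You need $V \in \RC^{1,\alpha}$ (again via $\RL^p$ theory, or Campanato estimates for divergence-form systems) before the first Schauder step is legitimate. Once $V \in \RC^{1,\alpha}$ the rest of the bootstrap runs exactly as you describe and terminates at $\RC^{\ell-1,\gamma}$ for the reason you give. So this is a fixable gap, not a wrong approach — but it is worth noticing that the paper's approximation argument is constructed specifically to avoid having to fill it.
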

\begin{proof}
Let $U$ be an eigensection of $-\Delta^\perp$ with eigenvalue $\lambda$. We consider a family of smooth submanifolds $\Sigma_s$ which converge to $\Sigma$ in $\RC^{\ell,\gamma}$ as $s \to 0$. Let us write $\Delta_s^\perp$ for the normal Laplacian on $N\Sigma_s$ and define $E_s$ to be the span of all the eigensections of $-\Delta_s^\perp$ with eigenvalue in $(\lambda-\delta, \lambda+\delta)$. If $\delta$ is small enough then the only element of $\spec(-\Delta^\perp)$ in this interval is $\lambda$. It then follows that $\dim(E_s)$ is constant and equal to $m := \rank(P_\lambda)$ for small $s$, and $E_s$ converges to $\im(P_\lambda)$ in the Grassmannian $\mathbf{Gr}_m$ of $\RL^2(N\Sigma)$ as $s \to 0$. 

We now demonstrate that every element of $\im(P_\lambda)$ is of class $\RC^{\ell-1,\gamma}$, by arguing that the $\RC^{\ell-1,\gamma}$-norm of any normalised eigensection in $E_s$ is bounded independently of $s$. To that end, let $\tilde U$ be an eigensection of $-\Delta^\perp_s$ with eigenvalue $\tilde \lambda \in (\lambda-\delta, \lambda+\delta)$. Locally, we may choose a frame $N_a$ for $N\Sigma_s$ and coordinates $x^i$ for $\Sigma_s$, of class $\RC^{\ell-1,\gamma}$ and $\RC^{\ell, \gamma}$, respectively. Suppressing dependencies on $s$, we then have
    \[\Delta^\perp \tilde U = (\Delta \tilde U^a)N_a + 2 h^{ij} \frac{\partial\tilde U^a}{\partial x^i}\nabla_j^\perp N_a + \tilde U^a \Delta^\perp N_a.\]
Let us define $q_{ab} = g(N_a, N_b)$ and write $q^{ab}$ for the matrix inverse to $q_{ab}$. Relabelling indices, we see that the eigenvalue equation for $\tilde U$ is equivalent to the following system of equations for the components $\tilde U^a$:
    \[\Delta \tilde U^a + 2 q^{ac} h^{ij} g(\nabla_j^\perp N_b, N_c) \frac{\partial \tilde U^b}{\partial x^i} + q^{ac} g(\Delta^\perp N_b, N_c) \tilde U^b + \tilde \lambda \tilde U^a= 0.\]
Inspection shows that 
    \[\|N_a\|_{\RC^{\ell-1,\gamma}}, \qquad \|g(\nabla_j^\perp N_b, N_c)\|_{\RC^{\ell-2,\gamma}} \qquad \text{and} \qquad \|g(\Delta^\perp N_b, N_c)\|_{\RC^{\ell-3,\gamma}}\]
are bounded independently of $s$. We conclude that the components $\tilde U^a$ solve a system of the form 
    \[h^{ij} \frac{\partial^2 \tilde U^a}{\partial x^i \partial x^j} + B^{ai}_b\frac{\partial \tilde U^b}{\partial x^i} + C^a_b \tilde U^b = 0\]
where the norms 
    \[\|h^{ij}\|_{\RC^{\ell-1,\gamma}}, \qquad \|B^{ai}_b\|_{C^{\ell-2,\gamma}} \qquad  \text{and} \qquad \|C^a_b\|_{\RC^{\ell-3,\gamma}}\]
are bounded independently of $s$. Using the Schauder estimates (see e.g. \cite[Section~5.5]{Giaquinta}) and an appropriate covering of $\Sigma_s$ by small balls we conclude that $\|\tilde U\|_{\RC^{\ell-1,\gamma}}$ is bounded independently of $s$, provided we have a uniform bound for $\sup_\Sigma |\tilde U|$. Such a bound follows from a standard iteration argument using $\Delta |\tilde U| \geq -\tilde \lambda|\tilde U|$ and the Sobolev inequality, provided that we normalise $\|\tilde U\|_{\RL^2} = 1$. This completes the proof. 
\end{proof}

\section{Variation Formulae}\label{section variations}

Let $(M,g)$ be a complete Riemannian $n$-manifold and $\Sigma$ a compact submanifold of codimension $k$. Throughout this section we consider a smooth variation of $\Sigma$, given by a smooth family of submanifolds $\Sigma_s$ such that $\Sigma_0 = \Sigma$. We parameterize the variation by a family of embeddings
    \[
    F : \Sigma \times (-\varepsilon, \varepsilon) \to M
    \]
such that $F(\cdot, 0)$ is the inclusion $\Sigma \hookrightarrow M$. We also assume that $\tfrac{\partial F}{\partial s}(\cdot, 0)$ is normal to $\Sigma$; while this assumption is not strictly necessary it does simplify some formulae and can always be arranged by composing $F$ with an $s$-dependent family of diffeomorphisms of $\Sigma$. Let us write 
    \[V := \frac{\partial F}{\partial s}(\cdot, 0).\]

Let $W = W(x,s)$ be a smoothly varying vector field on $\Sigma_s$. We define $\nabla_s W$ to be the covariant derivative of $W$ along the curve $s \mapsto F(x,s)$. In case $W(x,s) = \overline W(F(x,s))$ for some vector field $\overline W$ on $M$, we have 
    \[\nabla_s W = \nabla_{\frac{\partial F}{\partial s}} \overline W.\]
When $W(x,s)$ is normal to $\Sigma_s$ for each $s$, we define
    \[\nabla_s^\perp W = (\nabla_s W)^\perp.\]
    
We recall the first variation formula for the mean curvature.

\begin{lemma}\label{1st variation MC}
At $s = 0$ we have
    \[\nabla^\perp_s H  = \Delta^\perp V+ g(A^i_{j}, V)A^j_i + h^{ij} (R(V, X_i)X_j)^\perp\]
with respect to any local frame $X_i$ for $T\Sigma$.
\end{lemma}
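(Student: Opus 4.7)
The plan is to work in a convenient local frame, differentiate $H = h^{ij} A_{ij}$ along $s$, and collect the normal part. Fix a point $p \in \Sigma$ and choose coordinates $(x^i)$ on a neighborhood of $p$ in $\Sigma$ such that, at $s = 0$, the frame $X_i = \partial_{x^i}$ is orthonormal at $p$ with vanishing tangential Christoffel symbols $\nabla^\top_{X_i} X_j|_p = 0$ (normal coordinates on $\Sigma$ at $p$). Push these coordinate fields forward along $F$ to obtain tangential frames $X_i$ on each $\Sigma_s$; since $V$ and $X_i$ are coordinate fields on $\Sigma \times (-\varepsilon, \varepsilon)$, the Lie bracket $[V, X_i]$ vanishes, so $\nabla_V X_i = \nabla_{X_i} V$ by the torsion-free property of $\nabla$. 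With these choices, $A_{ij}|_{(p,0)} = \nabla_{X_i} X_j|_{(p,0)}$ and all computations at $(p,0)$ are significantly simplified.

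Next I split $\nabla_s H = (\partial_s h^{ij}) A_{ij} + h^{ij} \nabla_s A_{ij}$ and analyse the two terms at $(p,0)$. For the first term, using that $V$ is normal to $\Sigma$ at $s=0$ together with $g(V, X_j) = 0$ on $\Sigma$, differentiation in $X_i$ yields $g(\nabla_{X_i} V, X_j)|_{(p,0)} = -g(V, A_{ij})$; hence $\partial_s h_{ij}|_{(p,0)} = -2 g(V, A_{ij})$ and $\partial_s h^{ij}|_{(p,0)} = 2 g(V, A_{ij})$, so this piece contributes $2\, g(A_i^j, V) A^j_i$ after identifying indices at $p$ where $h^{ij} = \delta^{ij}$. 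For the second term I use $[V, X_i] = 0$ and the defining property of $R$ to write
\[
\nabla_V \nabla_{X_i} X_j = \nabla_{X_i} \nabla_{X_j} V + R(V, X_i) X_j,
\]
decompose $\nabla_{X_j} V = \nabla^\perp_{X_j} V - g(A_j^k, V) X_k$ using the tangential formula recorded in Section~\ref{notation section}, apply $\nabla_{X_i}$ again, and take the normal projection at $(p, 0)$. The first summand assembles, after contracting with $h^{ij}$ and invoking the paper's expression for the normal Laplacian (with $\nabla^\top_{X_i} X_j|_p = 0$), into $\Delta^\perp V$; the second contributes $-g(A_i^j, V) A^j_i$ upon projection onto $N\Sigma$; the curvature term $h^{ij}(R(V, X_i) X_j)^\perp$ passes through unchanged; and the $s$-derivative of the tangential piece $\nabla^\top_{X_i} X_j$ is purely tangential at $(p,0)$, hence killed by the normal projection.

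Combining the two contributions, the $+2$ from $\partial_s h^{ij}$ combines with the $-1$ from the tangential correction in $\nabla_{X_i} \nabla_{X_j} V$ to leave the claimed coefficient $+1$ on $g(A_i^j, V) A^j_i$. Since the resulting expression is tensorial in $V$ and in the frame $X_i$, and $p \in \Sigma$ was arbitrary, the formula holds globally. The main bookkeeping obstacle is the careful separation of normal and tangential contributions to $\nabla_{X_i} \nabla_{X_j} V$: the tangential part $-g(A_j^k, V) X_k$ of $\nabla_{X_j} V$, although small near $p$, must be fully retained so that its $\nabla_{X_i}$-derivative produces the requisite $A \cdot A$ correction with the correct sign to resolve against the contribution coming from $\partial_s h^{ij}$.
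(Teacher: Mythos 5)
Your proof is correct and follows essentially the same route as the paper's: differentiate $H = h^{ij}A_{ij}$ in $s$, split into the $\partial_s h^{ij}$ and $\nabla_s A_{ij}$ contributions, exploit $[V, X_i]=0$ together with the curvature identity to commute $\nabla_s$ and $\nabla_{X_i}$, and observe that $\nabla_s(\nabla^\top_{X_i} X_j)$ is tangential at the base point so it vanishes under normal projection. The only cosmetic difference is that the paper carries out the same bookkeeping in an ambient frame $Y_\alpha$ for $TM$ using partial derivatives of components, whereas you work directly with covariant derivatives and the tangent/normal splitting.
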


Our goal is to compute the first variation of $(1-Q)(H)$, in the situation where $g$ and $\Sigma$ are such that $\lambda_k(\Sigma, g) < \lambda_{k+1}(\Sigma,g)$. This assumption ensures that for $s$ sufficiently small the operator $Q$ associated with $\Sigma_s$ varies smoothly in $s$. 

Before proceeding we introduce one more instance of notation. Suppose $W$ is a normal vector field on $\Sigma$. We then define 
    \begin{equation}\label{def_B}
        \Lambda(V,W) := (\nabla_s^\perp \Delta^\perp  - \Delta^\perp \nabla_s^\perp )\tilde W\big|_{s = 0} 
    \end{equation}
for an arbitrary extension $\tilde W(\cdot, s)$ which is smooth, normal to $\Sigma_s$, and agrees with $W$ when $s = 0$. The extension we choose is irrelevant. Indeed, we show in Appendix~\ref{appendix commutators} that $\Lambda(V,W)$ is given by an expression in $W$ and its first- and second-order spatial derivatives, with coefficients determined by $V$, $\Sigma$ and $g$. 

\begin{proposition}\label{var P}
Suppose $g$ and $\Sigma$ are such that $\lambda_k(\Sigma, g) < \lambda_{k+1}(\Sigma,g)$. Let $W = W(\cdot,s)$ be a smooth vector field normal to $\Sigma_s$. When $s = 0$ we have
    \begin{align*}
        \nabla_s^\perp Q(W) &= Q(\nabla_s^\perp W) + \sum_{p > k} \sum_{m = 1}^k \frac{1}{\lambda_p - \lambda_m} \langle (1-Q)(W), U_p\rangle_{\RL^2} \langle \Lambda(V,U_m), U_p\rangle_{\RL^2} U_m\\
        &\qquad+ \sum_{p > k} \sum_{m = 1}^k \frac{1}{\lambda_p - \lambda_m} \langle Q(W),  U_m \rangle_{\RL^2}\langle \Lambda(V,U_m), U_p\rangle_{\RL^2}U_p\\
        &\qquad - \sum_{m = 1}^k \bigg(\int_{\Sigma} g((1-Q)(W), U_m) g(H, V) \dvoll_{\Sigma}\bigg) U_m
    \end{align*}
where $U_m$ is an $\RL^2$-orthonormal basis of eigensections for $-\Delta^\perp$ at $s = 0$, labelled so that the eigenvalues $\lambda_m$ are nondecreasing in $m$. 
\end{proposition}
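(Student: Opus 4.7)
The plan is to fix a smoothly-varying $\RL^2(\Sigma_s)$-orthonormal basis $\{U_p(\cdot,s)\}$ of eigensections of $-\Delta_s^\perp$ for $s$ in a neighbourhood of $0$, which is possible by the spectral gap hypothesis $\lambda_k(\Sigma,g) < \lambda_{k+1}(\Sigma,g)$, labelled so that $U_p(\cdot, 0) = U_p$. The spectral decomposition
\[
Q_s W(\cdot,s) = \sum_{p \le k} \langle U_p(\cdot,s), W(\cdot,s)\rangle_{\RL^2(\Sigma_s)}\, U_p(\cdot,s)
\]
can then be differentiated in $s$ directly, and the goal is to match the resulting terms with those in the stated formula.

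Two perturbative identities do the bulk of the work. Differentiating the eigenvalue equation $-\Delta_s^\perp U_p(\cdot,s) = \lambda_p(s) U_p(\cdot,s)$ in $s$ at $s = 0$, invoking the definition~\eqref{def_B} of $\Lambda(V,\cdot)$, and pairing with $U_q$ gives
\[
\langle \nabla_s^\perp U_p, U_q\rangle_{\RL^2} = \frac{\langle \Lambda(V, U_p), U_q\rangle_{\RL^2}}{\lambda_q - \lambda_p} \qquad\text{whenever }\lambda_p \ne \lambda_q.
\]
Differentiating the orthonormality relation $\langle U_p(s), U_q(s)\rangle_{\RL^2(\Sigma_s)} = \delta_{pq}$ and using the first variation of the area element, namely $\frac{d}{ds}\dvoll_{\Sigma_s}\big|_{s=0} = -g(H,V)\dvoll_\Sigma$, yields the symmetric relation
\[
\langle \nabla_s^\perp U_p, U_q\rangle_{\RL^2} + \langle \nabla_s^\perp U_q, U_p\rangle_{\RL^2} = \int_\Sigma g(U_p, U_q)\, g(H, V) \dvoll_\Sigma.
\]

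Applying the Leibniz rule to the spectral expansion of $Q_sW$ produces four families of terms. The $\nabla_s^\perp W$ contribution assembles into $Q(\nabla_s^\perp W)$, the first term of the formula. The area-form contribution is $-\sum_{m \le k} \bigl(\int_\Sigma g(U_m, W)\, g(H,V) \dvoll_\Sigma\bigr) U_m$, and splitting $W = Q(W) + (1-Q)(W)$ recovers the fourth term of the formula plus a residual low-mode piece $-\sum_{m \le k} \bigl(\int_\Sigma g(U_m, Q(W))\, g(H,V)\dvoll_\Sigma\bigr) U_m$. The remaining two families arise from $\nabla_s^\perp U_p$ appearing either inside the $\RL^2$ pairing or as the multiplier of $\langle U_p, W\rangle_{\RL^2}$; in each I expand $\nabla_s^\perp U_p$ in the basis $\{U_q\}$ and apply the first identity to the coefficients with $q > k$, which produces precisely the second and third sums of the formula, leaving behind two ``low-low'' residues involving only $c_{pq} := \langle \nabla_s^\perp U_p, U_q\rangle_{\RL^2}$ with $p, q \le k$.

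The main obstacle is to verify that these three surviving low-mode residues cancel. After relabelling $p \leftrightarrow q$ in one of the low-low sums, the two low-low pieces combine as $\sum_{p,q \le k} \langle U_q, Q(W)\rangle_{\RL^2} (c_{pq} + c_{qp})\, U_p$; the symmetric identity then replaces $c_{pq} + c_{qp}$ by $\int_\Sigma g(U_p, U_q)\, g(H,V) \dvoll_\Sigma$, and using $Q(W) = \sum_{q \le k} \langle U_q, Q(W)\rangle_{\RL^2} U_q$ one sees that the result is exactly the $Q(W)$ residue from the area-form term with the opposite sign. A minor subtlety is the smooth choice of frame within eigenspaces of higher multiplicity, but since the denominators in the formula only involve pairs with $m \le k < p$ and hence $\lambda_p - \lambda_m \ge \lambda_{k+1} - \lambda_k > 0$, no division by zero occurs, and the cancellation above shows that the final expression is independent of the particular choice of smooth orthonormal frame.
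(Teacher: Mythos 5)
Your overall strategy mirrors the paper's: differentiate the spectral representation $Q_s W = \sum_{m\le k}\langle U_m,W\rangle U_m$, use a perturbative identity for $\langle\nabla_s^\perp U_m,U_p\rangle$ across the spectral gap, an orthonormality identity for the remaining coefficients, and then cancel the surviving low-mode residues. The algebra of that cancellation is carried out correctly. However, the very first step contains a genuine gap.

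You assert that the spectral gap $\lambda_k(\Sigma,g)<\lambda_{k+1}(\Sigma,g)$ allows you to fix a smoothly varying $\RL^2(\Sigma_s)$-orthonormal basis $\{U_p(\cdot,s)\}$ of \emph{eigensections} of $-\Delta_s^\perp$. This is not true. The spectral gap guarantees only that the rank-$k$ projection $Q_s$, and hence the subspaces $\im(Q_s)$ and $\ker(Q_s)$, vary smoothly with $s$; it says nothing about eigenvalue crossings \emph{inside} the low cluster $\{\lambda_1,\dots,\lambda_k\}$ or inside the tail $\{\lambda_{k+1},\lambda_{k+2},\dots\}$. When such a crossing occurs at $s=0$ (e.g.\ $\lambda_1(0)=\lambda_2(0)$), a $C^\infty$ one-parameter family of self-adjoint operators need not admit any $C^1$ family of eigenvectors passing through $s=0$; Rellich's theorem rescues the real-analytic case, but there is no analyticity here. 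Consequently the objects $\nabla_s^\perp U_p$ and the coefficients $c_{pq}$ may simply fail to exist, and the eigenvalue equation $-\Delta_s^\perp U_p(\cdot,s)=\lambda_p(s)U_p(\cdot,s)$ cannot be differentiated at $s=0$ as you propose. Your closing remark that ``no division by zero occurs'' addresses a different, harmless issue and does not touch this one.

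The paper sidesteps the obstruction with one extra idea that you would need: choose the smooth frame $U_m(\cdot,s)$ so that $\{U_m\}_{m\le k}$ spans $\im(Q_s)$ and $\{U_m\}_{m>k}$ spans $\ker(Q_s)$ for every $s$ (such a smooth frame does exist by the spectral-gap argument), but demand that the $U_m$ be eigensections \emph{only at} $s=0$. Then the relation $\langle\Delta_s^\perp U_m,U_p\rangle_{\RL^2(\Sigma_s)}=0$ holds for all $s$ and all $m\le k<p$ because $\Delta_s^\perp$ preserves $\im(Q_s)$, and differentiating this identity---rather than the eigenvalue equation---produces the formula
\[
(1-Q)(\nabla_s^\perp U_m)=\sum_{p>k}\frac{1}{\lambda_p-\lambda_m}\langle\Lambda(V,U_m),U_p\rangle_{\RL^2}\,U_p,\qquad m\le k,
\]
at $s=0$. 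This is exactly the coefficient identity you want, obtained without assuming a smooth eigenframe. Once you substitute this derivation for the one you gave, the rest of your computation (the Leibniz expansion into four families, the split of $W$ into $Q(W)+(1-Q)(W)$, the relabelling and use of the area-form variation to cancel the low-low residues) goes through and produces the stated formula.
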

\begin{proof}
Let $U_m = U_m(\cdot,s) \in \RC^\infty(N\Sigma_s)$ be an orthonormal basis for for $\RL^2(N\Sigma_s)$ which varies smoothly in $s$. We assume that, for each $s$, $\{U_{m}\}_{m = 1}^k$ spans $\im(Q)$ and $\{U_m\}_{m = k+1}^\infty$ spans $\ker(Q)$. In addition, we assume that when $s = 0$ each $U_m$ is an eigensection for $-\Delta^\perp$, labelled so that the corresponding eigenvalues $\lambda_m$ are nondecreasing in $m$. (Note that we cannot in general demand that the $U_m$ are eigensections for every $s$, since they might then fail to vary smoothly in $s$.) 

We have
    \begin{equation}\label{eq:PinaBasis}
        Q = \sum_{m = 1}^k \langle \cdot, U_{m}\rangle_{\RL^2} U_{m},
    \end{equation}
and hence
    \begin{align*}
        \nabla_s^\perp Q(W) &= Q(\nabla_s^\perp W) + \sum_{m = 1}^k\langle W, \nabla_s^\perp U_m \rangle_{\RL^2} U_m + \sum_{m = 1}^k\langle W,  U_m \rangle_{\RL^2} \nabla_s^\perp U_m\\
        &\qquad- \sum_{m = 1}^k \bigg(\int_{\Sigma_s} g(W, U_m) g(H, \tfrac{\partial F}{\partial s}) \dvoll_{\Sigma_s}\bigg)U_m.
    \end{align*}
Differentiating the orthogonality condition
    \begin{equation}
        \langle U_m, U_p \rangle_{\RL^2} = \delta_{mp}
    \end{equation}
with respect to $s$ yields
    \begin{equation}\label{var P orthogonality}
        0 = \langle \nabla_s^\perp U_m, U_p\rangle_{\RL^2} + \langle U_m, \nabla_s^\perp U_p\rangle_{\RL^2} - \int_{\Sigma_s} g(U_m, U_p) g(H, \tfrac{\partial F}{\partial s}) \dvoll_{\Sigma_s}.
    \end{equation}
Multiplying this identity by $\langle W, U_p \rangle_{\RL^2} U_m$, summing over both indices (from 1 to $k$) and using \eqref{eq:PinaBasis}, we see that that
\begin{equation}
    \begin{aligned}
        0 &= \sum_{m = 1}^k\langle W, Q(\nabla_s^\perp U_m) \rangle_{\RL^2} U_m + \sum_{m = 1}^k\langle W,  U_m \rangle_{\RL^2} Q(\nabla_s^\perp U_m)\\
        & \qquad - \sum_{m = 1}^k \bigg(\int_{\Sigma_s} g(U_m, Q(W)) g(H, \tfrac{\partial F}{\partial s}) \dvoll_{\Sigma_s}\bigg) U_m
    \end{aligned}
\end{equation}
and hence 
    \begin{align}\label{var Q first step}
        \nabla_s^\perp Q(W) &= Q(\nabla_s^\perp W) + \sum_{m = 1}^k\langle W, (1-Q)(\nabla_s^\perp U_m) \rangle_{\RL^2} U_m + \sum_{m = 1}^k\langle W,  U_m \rangle_{\RL^2} (1-Q)(\nabla_s^\perp U_m) \notag\\
        &\qquad - \sum_{m = 1}^k \bigg(\int_{\Sigma_s} g((1-Q)(W), U_m) g(H, \tfrac{\partial F}{\partial s}) \dvoll_{\Sigma_s}\bigg) U_m.
    \end{align}

We now rewrite the terms $(1-Q)(\nabla_s^\perp U_m)$ for $m \leq k$. Consider any $p > k$, so that $U_p$ lies in $\ker(Q)$. We first observe that 
    \begin{align*}
        \frac{d}{ds}\langle\Delta^\perp U_m, U_p\rangle_{\RL^2} &= \langle\nabla_s^\perp \Delta^\perp U_m, U_p\rangle_{\RL^2} + \langle \Delta^\perp U_m, \nabla_s^\perp U_p\rangle_{\RL^2}\\
        &\qquad- \int_{\Sigma_s} g(\Delta^\perp U_m, U_p) g(H, \tfrac{\partial F}{\partial s}) \dvoll_{\Sigma_s}.
    \end{align*}
Since $U_m \in \im(Q)$ we have $\Delta^\perp U_m \in \im(Q)$, whereas $U_p \in \ker(Q)$, so the left-hand side is zero and we are left with
    \[\langle\nabla_s^\perp \Delta^\perp U_m, U_p\rangle_{\RL^2} = - \langle \Delta^\perp U_m, \nabla_s^\perp U_p\rangle_{\RL^2}+\int_{\Sigma_s} g(\Delta^\perp U_m, U_p) g(H, \tfrac{\partial F}{\partial s}) \dvoll_{\Sigma_s},\]
which may also be expressed as 
    \begin{align*}
        \langle(\nabla_s^\perp \Delta^\perp - \Delta^\perp \nabla_s^\perp) U_m, U_p\rangle_{\RL^2} &= - \langle \nabla_s^\perp U_m, \Delta^\perp U_p\rangle_{\RL^2} - \langle \Delta^\perp U_m, \nabla_s^\perp U_p\rangle_{\RL^2}\\
        &\qquad +\int_{\Sigma_s} g(\Delta^\perp U_m, U_p)g(H, V)\dvoll_{\Sigma_s}.
    \end{align*}
Evaluating this identity at $s = 0$ gives
    \begin{align*}
        \langle \Lambda(V,U_m), U_p\rangle_{\RL^2} &= \lambda_p\langle \nabla_s^\perp U_m, U_p\rangle_{\RL^2} + \lambda_m \langle U_m, \nabla_s^\perp U_p\rangle_{\RL^2} \\
        &\qquad-\lambda_m \int_{\Sigma} g(U_m, U_p)g(H, V)\dvoll_{\Sigma},
    \end{align*}
and after inserting \eqref{var P orthogonality} we arrive at
    \begin{align*}
        \langle \Lambda(V,U_m), U_p\rangle_{\RL^2} &=  (\lambda_p - \lambda_m)\langle \nabla_s^\perp U_m, U_p\rangle_{\RL^2} 
    \end{align*}
Since this holds for every $p > k$ we conclude that, when $s = 0$,
    \[(1-Q)(\nabla^\perp_s U_m) = \sum_{p > k} \frac{1}{\lambda_p - \lambda_m} \langle \Lambda(V,U_m), U_p\rangle_{\RL^2}U_p.\]

Inserting the previous identity into \eqref{var Q first step} shows that when $s =0$ we have
    \begin{align*}
        \nabla_s^\perp Q(W) &= Q(\nabla_s^\perp W) + \sum_{p > k} \sum_{m = 1}^k \frac{1}{\lambda_p - \lambda_m} \langle W, U_p\rangle_{\RL^2} \langle \Lambda(V,U_m), U_p\rangle_{\RL^2} U_m\\
        &\qquad+ \sum_{p > k} \sum_{m = 1}^k \frac{1}{\lambda_p - \lambda_m} \langle W,  U_m \rangle_{\RL^2}\langle \Lambda(V,U_m), U_p\rangle_{\RL^2}U_p\\
        &\qquad - \sum_{m = 1}^k \bigg(\int_{\Sigma} g((1-Q)(W), U_m) g(H, V) \dvoll_{\Sigma}\bigg) U_m.
    \end{align*}
In the first two sums on the right, $W$ may be replaced with $(1-Q)(W)$ and $Q(W)$ respectively. This completes the proof.
\end{proof}

We have the following consequence of Lemma~\ref{1st variation MC} and Proposition~\ref{var P}. 

\begin{corollary}
Suppose $g$ and $\Sigma$ are such that $\lambda_k(\Sigma, g) < \lambda_{k+1}(\Sigma,g)$. We assume in addition that $\Sigma$ has QPMC, i.e., $(1-Q)(H) = 0$. When $s = 0$ we then have
    \begin{align}\label{QPMC variation}
        \nabla_s^\perp (1-Q)(H) &= (1-Q)(\Delta^\perp V + g(A^i_{k}, V)A^k_i + h^{ij} (R(V, X_i)X_j)^\perp)\notag \\
        &\qquad-\sum_{p > k} \sum_{m = 1}^k \frac{1}{\lambda_p - \lambda_m} \langle Q(H),  U_m \rangle_{\RL^2}\langle \Lambda(V,U_m), U_p\rangle_{\RL^2}U_p,
    \end{align}
where $U_m$ is an $\RL^2$-orthonormal basis of eigensections for $-\Delta^\perp$ at $s = 0$, labelled so that the eigenvalues $\lambda_m$ are nondecreasing in $m$. 
\end{corollary}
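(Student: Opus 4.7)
The plan is to combine the two preceding results directly. Writing $\nabla_s^\perp (1-Q)(H) = \nabla_s^\perp H - \nabla_s^\perp Q(H)$, I would first apply Lemma~\ref{1st variation MC} to rewrite the first term as
\[\Psi := \Delta^\perp V + g(A^i_j, V)A^j_i + h^{ij}(R(V,X_i)X_j)^\perp,\]
and then apply Proposition~\ref{var P} with $W = H$ to expand the second term. The latter produces four pieces: $Q(\nabla_s^\perp H) = Q(\Psi)$, together with three sums that I will label (S1), (S2), (S3), in the order they appear in the statement of Proposition~\ref{var P}.

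The key simplification comes from the QPMC hypothesis $(1-Q)(H) = 0$. First, (S1) is proportional to $\langle (1-Q)(H), U_p\rangle_{\RL^2}$, which vanishes identically. Second, (S3) has integrand involving $g((1-Q)(H), U_m)$, which is likewise zero. Thus only (S2) survives among the three correction terms, and
\[\nabla_s^\perp Q(H) = Q(\Psi) + \sum_{p>k}\sum_{m=1}^k \frac{1}{\lambda_p - \lambda_m}\langle Q(H), U_m\rangle_{\RL^2}\langle \Lambda(V,U_m), U_p\rangle_{\RL^2} U_p.\]

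Subtracting this from $\nabla_s^\perp H = \Psi$ and collecting $\Psi - Q(\Psi) = (1-Q)(\Psi)$ yields \eqref{QPMC variation}. There is no genuine obstacle here; the whole argument is a one-line substitution once the two quoted identities are in place, and verifying that the QPMC condition kills precisely the two unwanted sums (S1) and (S3) is the only point where care is required. In particular, one should observe that both vanishing mechanisms rely on $(1-Q)$ being self-adjoint, so that $\langle (1-Q)(H), U_p\rangle_{\RL^2}$ and $\langle (1-Q)(H), U_m\rangle_{\RL^2}$ equal the corresponding pairings of $H$ with $(1-Q)(U_p)$ and $(1-Q)(U_m)$, which are either zero or kill $H$ via the QPMC hypothesis.
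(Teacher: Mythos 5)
Your proof is correct and follows exactly the route the paper intends (the paper itself gives no written proof, since it is a direct substitution of Lemma~\ref{1st variation MC} and Proposition~\ref{var P} followed by the observation that (S1) and (S3) vanish under the QPMC hypothesis). One small quibble: the concluding remark about self-adjointness of $1-Q$ is a red herring. The QPMC condition $(1-Q)(H)=0$ is a \emph{pointwise} statement about the normal vector field $H$, so $\langle (1-Q)(H), U_p\rangle_{\RL^2}$ and the integrand $g((1-Q)(H),U_m)$ in (S3) vanish directly, with no appeal to moving the projection across the pairing. Invoking self-adjointness there is not wrong, but it reframes a trivial vanishing as if it required an extra mechanism, and the phrase ``either zero or kill $H$'' muddies what is actually a one-step observation.
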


If $M = \R^k \times \S^{n-k}$, $g = g_0$ and $\Sigma = \{z\} \times \S^{n-k}$, the formula \eqref{QPMC variation} simplifies considerably. Indeed, $\Sigma$ is then totally geodesic and in particular minimal, so when $s = 0$ we have
    \[\nabla_s^\perp (1-Q)(H) = (1-Q)(\nabla_s^\perp H).\]
Moreover, the first variation of $H$ reduces to
    \[\nabla_s^\perp H = \Delta^\perp V,\]
so that, when $s = 0$,
    \[\nabla_s^\perp (1-Q)(H) = (1-Q)(\Delta^\perp V).\]
Given that $1-Q$ is simply the projection onto $\im(\Delta^\perp)$ in this particular case, we have:

\begin{corollary}\label{var (1-P)(H)}
If $M = \R^k \times \S^{n-k}$, $g = g_0$ and $\Sigma = \{z\}\times\S^{n-k}$ for some $z \in \mathbb{R}^k$, then 
    \[\nabla_s^\perp (1-Q)(H)  = \Delta^\perp V\]
when $s=0$.
\end{corollary}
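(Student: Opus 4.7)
The plan is to invoke the general variation formula stated in the previous corollary and argue that, under the specialized hypotheses $M = \R^k \times \S^{n-k}$, $g = g_0$, $\Sigma = \{z\} \times \S^{n-k}$, every term on the right-hand side collapses except the naked Laplacian $\Delta^\perp V$. The verification splits naturally into two parts: first, reducing $\nabla_s^\perp(1-Q)(H)$ to $(1-Q)(\Delta^\perp V)$, and second, showing that $1-Q$ acts as the identity on $\Delta^\perp V$.

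For the first reduction I would observe that $\Sigma$, being a factor of a Riemannian product, is totally geodesic in $(M,g_0)$, so $A \equiv 0$ and consequently $H \equiv 0$. This alone kills both the double sum involving $Q(H)$ and the quadratic second-fundamental-form contribution $g(A^i_k,V)A^k_i$ appearing in the preceding corollary. What remains from the non-Laplacian part is the ambient curvature term $h^{ij}(R(V,X_i)X_j)^\perp$. I would dispatch this by noting that for a product metric the Riemann tensor decomposes as the direct sum of the Riemann tensors of the two factors, so if one argument lies purely in $T\R^k$ and the others purely in $T\S^{n-k}$, the evaluation is zero. Since $V \in N\Sigma$ is tangent to the $\R^k$-factor and the $X_i$ may be chosen tangent to the $\S^{n-k}$-factor, this curvature term vanishes, leaving the identity $\nabla_s^\perp (1-Q)(H) = (1-Q)(\Delta^\perp V)$.

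For the second step I would use the explicit spectral description noted just before the corollary: on $\{z\}\times\S^{n-k}$ the normal bundle admits the parallel frame $\{\partial_{z^a}\}_{a=1}^k$, so $\nabla^\perp$ is flat and $\Delta^\perp$ reduces to the scalar Laplacian of $\S^{n-k}$ acting componentwise. In particular $\ker(\Delta^\perp)$ is exactly $k$-dimensional and the eigenvalues satisfy $\lambda_1=\dots=\lambda_k=0 < n-k = \lambda_{k+1}$. Therefore $Q$ coincides with the $L^2$-projection onto $\ker(\Delta^\perp)$, and $1-Q$ is the orthogonal projection onto $\ker(\Delta^\perp)^\perp = \im(\Delta^\perp)$. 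Since $\Delta^\perp V$ trivially lies in $\im(\Delta^\perp)$, we get $(1-Q)(\Delta^\perp V) = \Delta^\perp V$, completing the argument.

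No step is genuinely difficult; the calculation is essentially a sanity check that the linearization of the QPMC operator at a round totally geodesic slice is the clean expression $\Delta^\perp V$. The only point requiring a moment of thought is the vanishing of the ambient curvature term, which rests on the block structure of the Riemann tensor of a product metric. This clean linearization is exactly what will be needed later to apply the implicit function theorem in the existence proof of Theorem~\ref{main entire}.
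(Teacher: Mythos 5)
Your proposal is correct and follows essentially the same route as the paper: specialise the variation formula from the preceding corollary, use $A \equiv 0$ and $H \equiv 0$ (total geodesy of the slice) to kill the quadratic and spectral-sum terms, dispose of the ambient curvature term via the block structure of the product Riemann tensor, and then observe that $1-Q$ is the $\RL^2$-projection onto $\im(\Delta^\perp)$ so that it fixes $\Delta^\perp V$. The paper packages the first reduction slightly differently (arguing directly from Proposition~\ref{var P} that minimality gives $\nabla_s^\perp(1-Q)(H) = (1-Q)(\nabla_s^\perp H)$, then applying Lemma~\ref{1st variation MC}), but the observations doing the work are identical.
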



\section{Foliation of $\mathbb{R}^k \times \S^{n-k}$ by QPMC spheres}\label{section existence}

Let $M$ denote the cylinder $\mathbb{R}^k \times \S^{n-k}$, where $n \geq 2$ and $1 \leq k \leq n-1$. Points in $M$ will be written as $(z, \omega)$. Let $g_{\S^{n-k}}$ be the round metric on $\S^{n-k}$ with sectional curvatures equal to one. Recall the notation 
    \[g_0 := g_{\R^k} \oplus g_{\S^{n-k}}.\]

We consider an integer $\ell \geq 3$ and real number $\gamma \in (0,1)$. We write $\CG$ for the space of Riemannian metrics on $M$ of class $\RC^{\ell-1, \gamma}$. Note that $\CG$ is an open cone in the Banach space $\RC^{\ell-1, \gamma}(T^*M \otimes T^*M)$.

Each $u \in \RC^{\ell,\gamma}(\S^{n-k};\R^k)$ gives rise to a graphical codimension-$k$ submanifold of $M$, namely 
    \[\Sigma_u := \{(z, \omega) : z = u(\omega)\}.\]
We consider $\Sigma_u$ to be identified with $\S^{n-k}$ via the embedding $(u(\omega), \omega)$. Given such a graphical submanifold $\Sigma_u$, and a metric $g$ on $M$, we write $N_a^{g, u}$ for the orthogonal projection of $\partial_{z^a}$ onto $N\Sigma_u$. For any choice of $g$ and $u$, the sections $\{N_a^{g, u}\}_{a=1}^k$ are a frame for $N\Sigma_u$. We denote by $\dvoll_{g,u}$ the induced volume form on $\Sigma_u$. For each ambient metric $g$, we write $H_{g, u}$ for the mean curvature vector of $\Sigma_u$ with respect to $g$. The projection onto the space of quasi-parallel sections of $N\Sigma_u$ is given by
    \[Q_{g,u} = \sum_{\substack{\lambda \in \spec(-\Delta^\perp_{g, u}) \\ \lambda < \lambda_{k+1}}} P_{g, u, \lambda},\]
where the operators $P_{g, u, \lambda}$ are the projections onto the eigenspaces of $\Delta^\perp_{g, u}$. 

We aim to find, for each metric $g \in \mathcal G$ which is sufficiently close to $g_0$, a map $u:\S^{n-k}\to\R^k$ such that $H_{g, u}$ is quasi-parallel, i.e.,
    \[(1-Q_{g, u}) (H_{g, u}) = 0.\]
This will be achieved using the implicit function theorem, applied to a suitable mapping, which we now construct. 
    
We begin by introducing a frame for $N\Sigma_u$ which consists of elements of $\im(Q_{g, u})$, namely
    \[E_a^{g, u} := Q_{g, u}(N_a^{g, u}).\]

\begin{lemma}\label{E is a frame}
If $\|g - g_0\|_{\RC^{\ell-1,\gamma}}$ and $\|u\|_{\RC^{\ell,\gamma}}$ are both sufficiently small then $\{E_a^{g,u}\}_{a=1}^k$ is a global frame for $N\Sigma_u$.
\end{lemma}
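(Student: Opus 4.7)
The plan is based on the observation that at the base point $(g, u) = (g_0, 0)$ the sections $E_a^{g_0, 0}$ are precisely the constant vector fields $\partial_{z^a}$, which form a global frame for $N(\{0\}\times\S^{n-k})$. Indeed, $\Sigma_0$ is totally geodesic in $(M, g_0)$, so each $\partial_{z^a}$ is already normal to $\Sigma_0$ and parallel in $N\Sigma_0$, giving $N_a^{g_0, 0} = \partial_{z^a}$ and $\partial_{z^a} \in \ker(\Delta^\perp) \subset \im(Q_{g_0, 0})$, whence $E_a^{g_0, 0} = \partial_{z^a}$. The task reduces to showing that $(g, u) \mapsto E_a^{g, u}$ is continuous in, say, $\RC^0(\S^{n-k}; TM)$ near the base point.

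First I would verify that the pointwise normal projection $N_a^{g, u}$ depends continuously on $(g, u)$. The tangent space to $\Sigma_u$ at $(u(\omega), \omega)$ is spanned by the vectors $\partial_{\omega^i} + \partial_i u^b \,\partial_{z^b}$, so the orthogonal projection of $\partial_{z^a}$ onto $N\Sigma_u$ is a smooth algebraic expression in $g$, $u$, $du$ and the inverse of the induced metric, and standard estimates give continuous dependence in $\RC^{\ell-1, \gamma}$. The harder ingredient, which I expect to be the main obstacle, is continuity of $Q_{g, u}$. At the base point we have $\lambda_k(\Sigma_0, g_0) = 0$ and $\lambda_{k+1}(\Sigma_0, g_0) = n - k$, and the variational characterisation of $-\Delta^\perp$ ensures that this spectral gap persists under small perturbations, with the first $k$ eigenvalues of $-\Delta^\perp_{g, u}$ staying below $(n-k)/2$ and the remaining ones above. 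After trivialising the family of normal bundles against a fixed space of $\R^k$-valued functions on $\S^{n-k}$ via, for example, the frames $\{N_a^{g, u}\}$, all the operators $-\Delta_{g, u}^\perp$ can be viewed as acting on a common Hilbert space, and I would represent
\[
Q_{g, u} = \frac{1}{2\pi i}\oint_\Gamma \bigl(z + \Delta_{g, u}^\perp\bigr)^{-1} \de z
\]
for a fixed contour $\Gamma \subset \C$ enclosing $[0, (n-k)/2]$ and nothing else of the spectrum. Continuity of the resolvent in operator norm---which reduces to continuity of the coefficients of $\Delta_{g, u}^\perp$ and is uniform for $z \in \Gamma$---combined with the Schauder-type estimates underlying Lemma~\ref{eigensection regularity}, then upgrades to continuity of $Q_{g, u}$ as a bounded operator on $\RC^{\ell-1, \gamma}$.

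Combining the two continuity statements, $E_a^{g, u} = Q_{g, u}(N_a^{g, u})$ is close to $\partial_{z^a}$ uniformly on $\S^{n-k}$ whenever $\|g - g_0\|_{\RC^{\ell-1, \gamma}}$ and $\|u\|_{\RC^{\ell, \gamma}}$ are sufficiently small. The Gram matrix $g(E_a^{g, u}, E_b^{g, u})$ is therefore close to $\delta_{ab}$ pointwise, hence invertible at every point of $\Sigma_u$, so $\{E_a^{g, u}\}_{a=1}^k$ is a global frame for $N\Sigma_u$. The delicate part is the bookkeeping for the spectral perturbation theory; once the various normal bundles are identified via a continuously varying reference frame, the conclusion follows from the openness of the pointwise linear independence condition in $\RC^0$.
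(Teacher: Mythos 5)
Your argument is correct and follows the same strategy as the paper: observe that $E_a^{g_0,0}=\partial_{z^a}$ is a frame, establish that $(g,u)\mapsto E_a^{g,u}$ is continuous near $(g_0,0)$, and invoke openness of pointwise linear independence. The only cosmetic difference is that you justify continuity of $Q_{g,u}$ via the Riesz contour-integral formula for spectral projections, whereas the paper relies on the variational (Grassmannian) characterisation of $\im(Q)$ discussed in Section~\ref{notation section}; both are standard and equivalent here.
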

\begin{proof}
The projection $Q_{g, u}$ varies continuously as $g$ and $u$ vary continuously in a neighbourhood of $(g_0, 0)$. Moreover, the operator $Q_{g_0, 0}$ acts by the identity on the subspace spanned by $\{N^{g_0, 0}_a\}_{a =1}^k$. The claim follows. 
\end{proof}

We henceforth restrict attention to $g$ and $u$ such that Lemma~\ref{E is a frame} holds.

\begin{definition}\label{definition J}
We define
    \[J : \CG \times \RC^{\ell,\gamma}(\S^{n-k};\R^k) \to \RC^{\ell-2,\gamma}(\S^{n-k};\R^k)\]
such that $J(g, u)$ has components 
    \[J^a(g,u) = g((1-Q_{g, u}) (H_{g, u}), E^{g, u}_a) f_{g,u},\]
where $f_{g,u} : \S^{n-k} \to \mathbb{R}$ is the function determined by
    \[\dvoll_{g,u} = f_{g,u} \dvoll_{\S^{n-k}}.\]
\end{definition}

In defining $J$ as a map into $\RC^{\ell-2,\gamma}$ we have implicitly used the following facts: firstly, the mean curvature of a $\RC^{\ell,\gamma}$-submanifold is in $\RC^{\ell-2,\gamma}$ and, secondly, for $u \in \RC^{\ell, \gamma}$ the projection $Q$ maps $\RC^{\ell-2,\gamma}(N\Sigma_u)$ into itself by Lemma~\ref{eigensection regularity}.

In light of Lemma~\ref{E is a frame}, we have that $J(g, u) = 0$ if and only if $H_{g, u}$ is quasi-parallel. Note also that $J(g_0, 0) = 0$. We will solve the equation $J(g,u) = 0$ in a neighbourhood of $(g_0, 0)$ using the implicit function theorem. As it stands, the linearisation
    \[DJ_{(g_0,0)}(0, \cdot) : \RC^{\ell,\gamma}(\S^{n-k};\R^k) \to \RC^{\ell-2,\gamma}(\S^{n-k};\R^k)\]
fails to be invertible. Indeed, one can show (see Lemma~\ref{linearisation of J} below) that 
    \[DJ_{(g_0,0)}(0, \cdot) = \Delta_{\S^{n-k}},\]
which has $k$-dimensional kernel and cokernel consisting of the constant maps from $\S^{n-k}$ into $\mathbb{R}^k$. Geometrically, this failure of invertibility arises because the isometry group of $g_0$ contains the $k$-dimensional space of translations acting on $\R^k$.

The first step towards resolving this issue is to observe that $J$ actually takes values in the codimension-$k$ subspace of $\RC^{\ell-2,\gamma}(\S^{n-k};\R^k)$ consisting of maps with mean-zero components. This of course is by design and explains the introduction of the frame $E^{g,u}_a$ and weight $f_{g,u}$ in Definition~\ref{definition J}. Let us define, for each $\ell \in \mathbb{N}$, 
    \[\mathring{\RC}^{\ell,\gamma}(\S^{n-k};\R^k) := \bigg\{u \in  \RC^{\ell,\gamma} (\S^{n-k};\R^k): \int_{\S^{n-k}} u^a \, \dvoll_{\S^{n-k}} = 0 \; \forall \; a \in \{1, \dotsc, k\}\bigg\}.\] 

\begin{lemma}
The map $J$ sends $\CG \times \RC^{\ell,\gamma}(\S^{n-k};\R^k)$ to $\mathring{\RC}^{\ell-2,\gamma}(\S^{n-k};\R^k)$. 
\end{lemma}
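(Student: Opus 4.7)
The plan is to unwind the definition of $J^a(g,u)$ to recognize its integral as an $\RL^2$-inner product on $\Sigma_u$, and then exploit the fact that $Q_{g,u}$ is an orthogonal projection.

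Concretely, I would first observe that because $f_{g,u}$ is precisely the density of $\dvoll_{g,u}$ with respect to $\dvoll_{\S^{n-k}}$ (via the identification $\Sigma_u \cong \S^{n-k}$ through $\omega \mapsto (u(\omega),\omega)$), we have for every $a \in \{1,\dotsc,k\}$
    \[\int_{\S^{n-k}} J^a(g,u) \dvoll_{\S^{n-k}} = \int_{\Sigma_u} g\bigl((1-Q_{g,u})(H_{g,u}),\, E_a^{g,u}\bigr) \dvoll_{g,u} = \bigl\langle (1-Q_{g,u})(H_{g,u}),\, E_a^{g,u}\bigr\rangle_{\RL^2}.\]

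Next I would note that $Q_{g,u}$ is the sum of the $\RL^2(N\Sigma_u)$-orthogonal projections $P_{g,u,\lambda}$ onto mutually orthogonal eigenspaces of $-\Delta^\perp_{g,u}$, and is therefore itself an $\RL^2$-orthogonal projection. Consequently $1-Q_{g,u}$ is the $\RL^2$-orthogonal projection onto the orthogonal complement of $\im(Q_{g,u})$, so $\im(1-Q_{g,u})$ and $\im(Q_{g,u})$ are mutually $\RL^2$-orthogonal.

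Finally, by definition $E_a^{g,u} = Q_{g,u}(N_a^{g,u}) \in \im(Q_{g,u})$, while $(1-Q_{g,u})(H_{g,u}) \in \im(1-Q_{g,u})$. The inner product above therefore vanishes, which shows that each component of $J(g,u)$ has zero mean over $\S^{n-k}$, i.e.\ $J(g,u) \in \mathring\RC^{\ell-2,\gamma}(\S^{n-k};\R^k)$. There is no genuine obstacle here; the statement is essentially tautological once the weight $f_{g,u}$ in Definition~\ref{definition J} is recognized as exactly what converts the integrand into an $\RL^2$-pairing on $\Sigma_u$, which was of course the point of including it.
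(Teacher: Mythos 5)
Your proof is correct and matches the paper's argument step for step: rewrite the integral as the $\RL^2(N\Sigma_u)$-pairing $\langle (1-Q_{g,u})(H_{g,u}), E_a^{g,u}\rangle_{\RL^2}$ using that $f_{g,u}$ is the Radon--Nikodym density, then invoke the orthogonality of $\im(Q_{g,u})$ and $\im(1-Q_{g,u})$. No discrepancies.
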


\begin{proof}
Fix $g \in \CG$ and $u \in \RC^{\ell,\gamma}(\S^{n-k};\R^k)$. The claim is that for each $a \in \{1,\dotsc,k\}$ we have
    \[\int_{\S^{n-k}} J^a(g,u) \, \dvoll_{\S^{n-k}} = 0.\]
Unpacking the definition of $J^a(g,u)$ gives
    \begin{align*}
    \int_{\S^{n-k}} J^a(g,u) \, \dvoll_{\S^{n-k}} &= \int_{\S^{n-k}} g((1-Q)(H), E_a) f_{g,u} \dvoll_{\S^{n-k}}\\
    &= \int_{\S^{n-k}} g((1-Q)(H), E_a) \dvoll_{g,u}\\
    &= \langle (1-Q)(H), E_a\rangle_{\RL^2(N\Sigma_u)}.
    \end{align*}
Recall that $E_a$ lies in $\im(Q)$. The claim follows, since the images of $1-Q$ and $Q$ are orthogonal subspaces of $\RL^2(N\Sigma_u)$.
\end{proof}

To account for the nontrivial kernel of $DJ_{(g_0,0)}(0, \cdot)$ we only consider submanifolds $\Sigma_u$ which are appropriately centered about $\{0\}\times\S^{n-k}$. That is, we henceforth consider $J$ to be defined as before, but with its domain restricted, so that
    \[J: \CG \times \mathring{\RC}^{\ell,\gamma} (\S^{n-k};\R^k) \to \mathring{\RC}^{\ell-2,\gamma} (\S^{n-k};\R^k).\]

We now compute the linearisation of $J$ at $(g_0, 0)$ using the variation formulae derived in Section~\ref{section variations}.

\begin{lemma}\label{linearisation of J}
The linearisation 
    \[DJ_{(g_0, 0)}(0, \cdot) : \mathring{\RC}^{\ell,\gamma} (\S^{n-k};\R^k) \to \mathring{\RC}^{\ell-2,\gamma} (\S^{n-k};\R^k)\]
is given by 
    \[DJ_{(g_0, 0)}(0, \varphi) = \Delta_{\S^{n-k}} \varphi.\]
\end{lemma}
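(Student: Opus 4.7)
The plan is to compute the Gateaux derivative directly using Corollary~\ref{var (1-P)(H)}. Consider the variation $u_s := s\varphi$ and the corresponding family of embeddings $F_s(\omega) := (s\varphi(\omega),\omega)$. The initial velocity $V := \partial_s F|_{s=0} = \varphi^a \partial_{z^a}$ lies in $\R^k \oplus \{0\}$ and is therefore automatically normal to $\Sigma_0 := \{0\}\times\S^{n-k}$, so the setup from Section~\ref{section variations} applies without adjustment.

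At the baseline $(g_0, 0)$, the submanifold $\Sigma_0$ is totally geodesic, hence $H_{g_0, 0} = 0$ and in particular $(1 - Q_{g_0,0})(H_{g_0, 0}) = 0$. Since each $\partial_{z^a}$ is normal to $\Sigma_0$ and parallel with respect to $\nabla^\perp$, one has $N_a^{g_0, 0} = \partial_{z^a}$ and $E_a^{g_0, 0} = Q_{g_0, 0}(\partial_{z^a}) = \partial_{z^a}$; moreover $\dvoll_{g_0, 0} = \dvoll_{\S^{n-k}}$, so $f_{g_0, 0} = 1$. Applying the product rule to $J^a(g_0, s\varphi) = g_0((1-Q)(H), E_a)\,f$ at $s=0$, the contributions coming from differentiating $E_a$ and $f$ vanish because they are multiplied by the zero baseline $(1-Q)(H)|_{s=0}$. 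What survives is
\begin{equation*}
DJ^a_{(g_0, 0)}(0, \varphi) = g_0\bigl(\nabla_s^\perp (1-Q)(H)\big|_{s=0},\, \partial_{z^a}\bigr),
\end{equation*}
where the passage from the ordinary $s$-derivative to $\nabla_s^\perp$ is justified by the same vanishing of the baseline.

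To finish, I invoke Corollary~\ref{var (1-P)(H)}, which in this flat-cylinder setting identifies $\nabla_s^\perp(1-Q)(H)|_{s=0}$ with $\Delta^\perp V$. Because the frame $\{\partial_{z^a}\}$ is parallel for $\nabla^\perp$ on $\Sigma_0$, the normal Laplacian acts componentwise: $\Delta^\perp V = (\Delta_{\S^{n-k}}\varphi^a)\,\partial_{z^a}$. Taking the $g_0$-inner product with $\partial_{z^a}$ and using $g_0(\partial_{z^a}, \partial_{z^b}) = \delta_{ab}$ yields $DJ^a_{(g_0,0)}(0,\varphi) = \Delta_{\S^{n-k}}\varphi^a$, as claimed. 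There is no genuine obstacle here: the heavy lifting is already done by Corollary~\ref{var (1-P)(H)}, and the remaining task is to verify the baseline values $H=0$, $E_a = \partial_{z^a}$, $f=1$ and to observe that the vanishing of $(1-Q)(H)$ at $s=0$ suppresses every contribution to the product rule other than the Laplacian of the variation.
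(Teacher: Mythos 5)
Your proposal is correct and follows essentially the same route as the paper: the same variation $F(\omega,s) = (s\varphi(\omega),\omega)$, the same baseline computations ($H=0$, $E_a = \partial_{z^a}$, $f=1$), the same observation that vanishing of $(1-Q)(H)$ at $s=0$ kills all product-rule terms except the one from $\nabla_s^\perp(1-Q)(H)$, and the same appeal to Corollary~\ref{var (1-P)(H)}.
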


\begin{proof}
It suffices to prove the claim when $\varphi$ is smooth, since the general case then follows by approximation. Consider the smooth variation of $\{0\} \times \S^{n-k}$ given by 
    \[F(\omega, s) = (s\varphi(\omega), \omega), \qquad s \in (-\varepsilon, \varepsilon).\]
With respect to $g_0$ we have 
    \[\frac{\partial F}{\partial s}(\cdot, 0) = \varphi^a \partial_{z^a}.\]
We need to evaluate 
    \[\frac{\partial}{\partial s} J^a(g_0, s \varphi) = \frac{\partial}{\partial s} \Big(g_0((1-Q)(H), E_a) f\Big) \]
at $s = 0$. Here and in the computation that follows we often suppress dependencies on the ambient metric and $s$, simply writing $H$ for $H_{g_0, s\varphi}$, and so on. 

We first expand and use the first variation formula for the volume form of a submanifold to obtain
    \begin{align*}
        \frac{\partial}{\partial s} J^a(g_0, s \varphi) &= g_0( \nabla_s^\perp (1-Q)(H), E_a) f + g_0((1-Q)(H), \nabla_s^\perp E_a) f\\
        &\qquad - g_0((1-Q)(H), E_a) g_0(H, \tfrac{\partial F}{\partial s}) f.
    \end{align*}
When $s = 0$ we have $E_a = \partial_{z^a}$, $H = 0$ and $f = 1$, so this reduces to
    \[\frac{\partial}{\partial s} J^a(g_0, s \varphi) = g_0( \nabla_s^\perp (1-Q)(H), \partial_{z^a}).\]
By Corollary~\ref{var (1-P)(H)} we know that
    \[\nabla^\perp_s (1-Q)(H) = \Delta^\perp \frac{\partial F}{\partial s}=\Delta \varphi^a \partial_{z^a}\]
when $s = 0$. The claim follows.
\end{proof}

Lemma~\ref{linearisation of J} and standard results in linear elliptic theory give the following.

\begin{lemma}\label{linearisation invertible}
The linearisation 
    \[DJ_{(g_0, 0)}(0, \cdot) : \mathring{\RC}^{\ell,\gamma} (\S^{n-k};\R^k) \to \mathring{\RC}^{\ell-2,\gamma} (\S^{n-k};\R^k)\]
is invertible.
\end{lemma}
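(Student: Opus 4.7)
The plan is to appeal to standard elliptic theory for the Laplacian on $\S^{n-k}$, using the fact established in Lemma~\ref{linearisation of J} that $DJ_{(g_0,0)}(0,\cdot)$ acts componentwise as $\Delta_{\S^{n-k}}$. Since the components of the domain and codomain are defined by the mean-zero constraint, the problem reduces to showing that $\Delta_{\S^{n-k}}$ is a Banach space isomorphism from $\mathring{\RC}^{\ell,\gamma}(\S^{n-k};\R)$ onto $\mathring{\RC}^{\ell-2,\gamma}(\S^{n-k};\R)$.

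For injectivity, if $\varphi \in \mathring{\RC}^{\ell,\gamma}(\S^{n-k};\R^k)$ satisfies $\Delta_{\S^{n-k}}\varphi = 0$ then each component $\varphi^a$ is harmonic on the closed manifold $\S^{n-k}$ and hence constant. The mean-zero condition then forces $\varphi^a \equiv 0$. For surjectivity, given $\psi \in \mathring{\RC}^{\ell-2,\gamma}(\S^{n-k};\R^k)$, each component $\psi^a$ is $\RL^2$-orthogonal to the kernel of $\Delta_{\S^{n-k}}$ (the constants), so the Fredholm alternative yields a weak solution $\varphi^a \in \RW^{1,2}(\S^{n-k})$ to $\Delta_{\S^{n-k}}\varphi^a = \psi^a$; this solution is unique up to an additive constant, which we fix by requiring mean zero. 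Schauder estimates (see e.g.\ \cite[Section~5.5]{Giaquinta}) then promote $\varphi^a$ from $\RW^{1,2}$ to $\RC^{\ell,\gamma}$ since $\psi^a \in \RC^{\ell-2,\gamma}$, and give the a priori bound
\[\|\varphi^a\|_{\RC^{\ell,\gamma}} \leq C\bigl(\|\psi^a\|_{\RC^{\ell-2,\gamma}} + \|\varphi^a\|_{\RC^0}\bigr);\]
on the mean-zero subspace the Poincar\'e inequality absorbs the last term, yielding a bound on the inverse. Alternatively, once bijectivity is known, boundedness of the inverse follows from the open mapping theorem. There is no real obstacle here; the lemma is a routine consequence of Lemma~\ref{linearisation of J} together with classical Schauder theory for the scalar Laplacian on a closed manifold.
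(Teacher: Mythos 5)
Your proof is correct and takes essentially the same route as the paper, which simply cites Lemma~\ref{linearisation of J} together with ``standard results in linear elliptic theory''; you have filled in exactly the standard details (injectivity via harmonicity on a closed manifold, surjectivity via the Fredholm alternative and Schauder regularity, and boundedness of the inverse either by a priori estimates or the open mapping theorem). The only slight imprecision is the remark that the Poincar\'e inequality ``absorbs'' the $\RC^0$ term in the Schauder estimate, since Poincar\'e controls an $\RL^2$ norm rather than $\RC^0$; but this is harmless, as you also give the open mapping theorem as a clean alternative.
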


With all of these preparations concluded, we may now proceed with the main result of this section. 

\begin{proposition}\label{foliation existence 1}
For every $\ell \ge 3$ and $\gamma \in (0,1)$ there are constants $\delta_0 > 0$ and $\varepsilon > 0$ which depend only on $n$, $\ell$ and $\gamma$ and have the following property. If $\|g - g_0\|_{\RC^{\ell-1, \gamma}} \leq \varepsilon$ then for every $z \in \mathbb{R}^k$ there exists a unique  
    \[u_\star = u_\star(z, g) \in \{u \in \mathring{\RC}^{\ell,\gamma} (\S^{n-k};\R^k) : \|u\|_{\RC^{\ell,\gamma}} \leq \delta_0\}\]
such that the graphical submanifold $\Sigma_{z + u_\star}$ has QPMC with respect to $g$. Moreover, there is a constant $C = C(n, \ell, \gamma)$ such that
    \[\|u_\star(z, g)\|_{\RC^{\ell,\gamma}} \leq C \|g-g_0\|_{\RC^{\ell-1,\gamma}}.\]
\end{proposition}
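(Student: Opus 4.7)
The plan is to apply the implicit function theorem for $\RC^1$ maps between Banach spaces to $J$ at the base point $(g_0, 0)$, and then deduce the statement for arbitrary $z \in \R^k$ by exploiting the translation invariance of $g_0$.

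The first task is to verify that $J : \CG \times \mathring{\RC}^{\ell,\gamma}(\S^{n-k};\R^k) \to \mathring{\RC}^{\ell-2,\gamma}(\S^{n-k};\R^k)$ is continuously differentiable in a neighbourhood of $(g_0, 0)$. The mean curvature $H_{g,u}$, the frame vectors $N_a^{g,u}$, and the volume density $f_{g,u}$ all depend smoothly on $(g,u)$ in the required norms. For the projection $Q_{g,u}$, we use that at $(g_0, 0)$ the spectral gap $\lambda_k = 0 < n-k = \lambda_{k+1}$ is strict, so on a small $\RC^{\ell-1,\gamma}$-neighbourhood of $(g_0, 0)$ the projection $Q_{g,u}$ may be written as a Riesz projection along a fixed loop enclosing only the eigenvalues below $(n-k)/2$, making it a smooth family of finite-rank operators; Lemma~\ref{eigensection regularity} guarantees the correct mapping properties on $\RC^{\ell-2,\gamma}$. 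With $J(g_0, 0) = 0$ and Lemma~\ref{linearisation invertible} in hand, the implicit function theorem produces a radius $\varepsilon > 0$ and a $\RC^1$ map $g \mapsto u_\star(g)$, defined for $\|g - g_0\|_{\RC^{\ell-1,\gamma}} < \varepsilon$ and taking values in a small $\mathring{\RC}^{\ell,\gamma}$-ball of radius $\delta_0$, such that $J(g, u_\star(g)) = 0$ and $u_\star(g)$ is the unique solution in that ball. Since $u_\star(g_0) = 0$ and the differential $Du_\star(g_0) = -(DJ_{(g_0,0)}(0,\cdot))^{-1} \circ DJ_{(g_0,0)}(\cdot, 0)$ is bounded purely in terms of $n,\ell,\gamma$, a mean value argument yields the Lipschitz bound $\|u_\star(g)\|_{\RC^{\ell,\gamma}} \leq C\|g - g_0\|_{\RC^{\ell-1,\gamma}}$.

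To extend this to general $z \in \R^k$, I would use translations. For $z \in \R^k$ let $T_z : M \to M$ be the translation $T_z(w,\omega) = (w + z, \omega)$ and set $g^z := T_z^* g$. Because $T_z$ is an isometry of $g_0$, we have $\|g^z - g_0\|_{\RC^{\ell-1,\gamma}} = \|g - g_0\|_{\RC^{\ell-1,\gamma}}$, so the previous step applies to $g^z$ uniformly in $z$. Setting $u_\star(z,g) := u_\star(g^z)$ and noting that $T_z(\Sigma_{u_\star(g^z)}) = \Sigma_{z + u_\star(z,g)}$, the isometric invariance of the QPMC condition implies that $\Sigma_{z + u_\star(z,g)}$ has QPMC with respect to $g$. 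Uniqueness in the $\delta_0$-ball at each $z$ is inherited from the uniqueness clause of the implicit function theorem.

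The main obstacle is the rigorous verification that $J$ is $\RC^1$ between the stated Banach spaces, because $Q_{g,u}$ is defined through the spectral decomposition of a $(g,u)$-dependent normal Laplacian acting on a $(g,u)$-dependent bundle. The resolution is to fix a reference bundle (for instance, pull back $N\Sigma_u$ to $N\Sigma_0$ via a $(g,u)$-dependent orthonormal frame), express $\Delta^\perp_{g,u}$ as a smoothly varying family of elliptic operators there, and then represent $Q_{g,u}$ as the Riesz projection $\tfrac{1}{2\pi i}\oint_\Gamma (\zeta + \Delta^\perp_{g,u})^{-1}\,d\zeta$ for a fixed contour $\Gamma$ isolating the low spectrum, which yields the required smoothness together with the mapping properties furnished by Lemma~\ref{eigensection regularity}.
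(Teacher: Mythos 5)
Your proposal is correct and follows essentially the same route as the paper: apply the implicit function theorem to $J$ at the base point $(g_0,0)$ using Lemma~\ref{linearisation invertible}, then extend to arbitrary $z$ via pullback by translations and the invariance identity $J(\tau_z^* g, u) = J(g, z+u)$. The only addition is your more explicit discussion of why $J$ is $\RC^1$ (Riesz projection representation of $Q_{g,u}$ over a fixed reference bundle), which the paper leaves implicit but which is a welcome clarification.
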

\begin{proof}
We first perturb $\{0\} \times \S^{n-k}$ to a QPMC sphere. This is easily achieved using Lemma~\ref{linearisation invertible} and the implicit function theorem. Indeed, by those two results, there are positive constants $\delta_0$ and $\varepsilon$, each of which depends only on $n$, $\ell$ and $\gamma$, with the following property. If $\|g - g_0\|_{\RC^{\ell-1, \gamma}} \leq \varepsilon$ then for every $z \in \mathbb{R}^k$ there exists a unique  
    \[u_\star = u_\star(0, g) \in \{u \in \mathring{\RC}^{\ell,\gamma} (\S^{n-k};\R^k) : \|u\|_{\RC^{\ell,\gamma}} \leq \delta_0\}\]
which solves
    \[J(g, u_\star(0,g)) =0.\]
The implicit function theorem also provides the estimate
    \begin{equation}\label{bound u by g}
        \|u_\star(0, g)\|_{\RC^{\ell,\gamma}} \leq C(n,\ell,\gamma) \|g-g_0\|_{\RC^{\ell-1,\gamma}}.
    \end{equation}
Recall that $J(g, u_\star(0,g)) =0$ implies $\Sigma_{u_\star(0,g)}$ has QPMC with respect to $g$. 

Now consider an arbitrary $z_0 \in \mathbb{R}^k$. Let $\tau_{z_0} : M \to M$ denote the translation $(z,\omega) \mapsto (z+z_0,\omega)$. We then have
    \[J(\tau^*_{z_0} g, u) = J(g, z_0 + u)\]
for every $u$. For each $g \in \mathcal G$ such that $\|g - g_0\|_{\RC^{\ell-1, \gamma}} \leq \varepsilon$, we set 
    \[u_\star(z_0, g) := u_\star (0,\tau^*_{z_0} g).\]
It follows that
    \[0 = J(\tau^*_{z_0} g, u_\star) = J(g, z_0 + u_\star),\]
and hence $\Sigma_{z_0 + u_\star(z_0,g)}$ has QPMC with respect to $g$. Moreover, $u_\star$ is the unique element of 
    \[\{u \in \mathring{\RC}^{\ell,\gamma} (\S^{n-k};\R^k) : \|u\|_{\RC^{\ell,\gamma}} \leq \delta_0\}\]
which has this property. Finally, by \eqref{bound u by g} we have 
    \[\|u_\star(z_0, g)\|_{\RC^{\ell,\gamma}} = \|u_\star(0, \tau^*_{z_0} g)\|_{\RC^{\ell,\gamma}} \leq C \|\tau^*_{z_0} g - g_0\|_{\RC^{\ell-1,\gamma}} =C \|g-g_0\|_{\RC^{\ell-1,\gamma}}.\]
\end{proof}

Next we demonstrate that if $g$ is smooth then a graphical submanifold with QPMC is smooth.

\begin{proposition}\label{regularity}
If $g$ is smooth and $u \in {\RC}^{3,\gamma}(\S^{n-k};\R^k)$ is such that 
    \[(1-Q_{g,u})(H_{g,u}) = 0,\]
then $u \in \RC^{\infty}(\S^{n-k};\R^k)$. 
\end{proposition}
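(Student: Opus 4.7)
The plan is elliptic bootstrapping via Schauder estimates, with Lemma~\ref{eigensection regularity} supplying the regularity of the right-hand side at each iteration. The starting observation is that the QPMC equation $(1-Q_{g,u})(H_{g,u})=0$ is equivalent to
\[
  H_{g,u} \;=\; \sum_{m=1}^{d} c^m U_m,
\]
where $d = \dim \im(Q_{g,u})$, the $U_m$ form an $\RL^2$-orthonormal basis of $\im(Q_{g,u})$ consisting of eigensections of $-\Delta^\perp$, and $c^m = \langle H_{g,u}, U_m\rangle_{\RL^2}$ are fixed scalars. Choosing in each coordinate patch on $\S^{n-k}$ a local $\RC^{\ell-1,\gamma}$ frame $N_1,\dotsc,N_k$ for $N\Sigma_u$, this becomes a system of $k$ scalar equations
\[
  g(H_{g,u}, N_a) \;=\; \sum_{m=1}^{d} c^m\, g(U_m, N_a), \qquad a = 1,\dotsc,k.
\]

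I would next view this as a quasilinear elliptic system for the graph function $u$. Parameterising $\Sigma_u$ by $F(\omega)=(u(\omega),\omega)$ and writing $h^{ij}$ for the inverse of the induced metric, the standard expression for the mean curvature of a graph exhibits each $g(H_{g,u}, N_a)$ as a second-order quasilinear differential operator in $u$. Its principal symbol, essentially $h^{ij}$, is uniformly elliptic for $u$ in a bounded subset of $\RC^{1,\gamma}$, and the remaining lower-order contributions depend smoothly on $u$, $\nabla u$, and the ambient metric $g$.

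The bootstrap step is then as follows. Suppose $u \in \RC^{\ell,\gamma}$ for some $\ell \geq 3$. Since $g$ is smooth and $\Sigma_u$ is of class $\RC^{\ell,\gamma}$, Lemma~\ref{eigensection regularity} gives $U_m \in \RC^{\ell-1,\gamma}$; the frame sections $N_a$ are likewise of class $\RC^{\ell-1,\gamma}$. Hence the right-hand side of each scalar equation belongs to $\RC^{\ell-1,\gamma}$, and the principal coefficients $h^{ij}(u, \nabla u)$ together with the lower-order terms are of the same class. Interpreting the system as a linear elliptic system with $\RC^{\ell-1,\gamma}$ coefficients and $\RC^{\ell-1,\gamma}$ data, linear Schauder estimates (as in \cite[Section~5.5]{Giaquinta}) promote $u$ to $\RC^{\ell+1,\gamma}$. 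Iterating this gain of one derivative starting from the hypothesis $u \in \RC^{3,\gamma}$, we conclude $u \in \RC^{\ell,\gamma}$ for every $\ell$, hence $u \in \RC^\infty$.

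The point requiring care is the nonlocality of $Q_{g,u}$, which inserts the globally-defined spectral objects $U_m$ into the equation. The structural fact that closes the bootstrap is Lemma~\ref{eigensection regularity}: the eigensections of $-\Delta^\perp$ lose exactly one derivative of regularity compared to $\Sigma_u$. Because the left-hand side of $H = \sum c^m U_m$ is second-order in $u$ while the right-hand side is zeroth-order in $u$ up to multiplication by the fixed scalars $c^m$, this one-derivative loss is absorbed, each Schauder iteration produces a net gain of one derivative, and the nonlocality of $Q_{g,u}$ presents no obstruction.
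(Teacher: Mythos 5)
Your proposal is correct and follows essentially the same route as the paper's proof: rewrite the QPMC condition as $H$ being a fixed finite linear combination of eigensections, invoke Lemma~\ref{eigensection regularity} to control their regularity, observe that $H$ is a uniformly elliptic quasilinear second-order operator in $u$ with coefficients of the same class, and bootstrap via Schauder. The extra detail you supply --- passing to a local frame to obtain a scalar system, and the closing remark explaining why the nonlocality of $Q_{g,u}$ is not an obstruction --- merely makes explicit the steps the paper leaves implicit.
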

\begin{proof}
The QPMC equation 
    \[H = Q(H)\]
asserts that $H$ is a linear combination of finitely many eigensections of $-\Delta^\perp$. If $u$ is of class $\RC^{\ell,\gamma}$ for some $\ell \geq 3$ then each of these eigensections is of class $\RC^{\ell-1,\gamma}$ by Lemma~\ref{eigensection regularity}. Moreover, in appropriate local coordinates, $H$ is given by a uniformly elliptic quasilinear operator acting on $u$, whose coefficients are of class $\RC^{\ell-1,\gamma}$ if $u \in \RC^{\ell,\gamma}$. Therefore, the Schauder estimates (see e.g. \cite[Section~5.5]{Giaquinta}) imply that if $u \in \RC^{\ell,\gamma}$ then $u \in \RC^{\ell+1,\gamma}$. A standard bootstrapping argument thus shows that $u$ is smooth if it is assumed to be of class $\RC^{3,\gamma}$. 
\end{proof}

We may now prove that the submanifolds $\Sigma_{z+u_\star}$ form a smooth foliation of $M$ as $z$ ranges over $\R^k$. 

\begin{corollary}\label{foliation existence 2}
Suppose we are in the same setting as in Proposition~\ref{foliation existence 1}. If $g$ is smooth and $\varepsilon$ is sufficiently small then the map $\Phi_g : M \to M$ given by
    \[\Phi_g(z,\omega) := (z + u_\star(z, g)(\omega), \omega)\]
is a diffeomorphism. In particular, the submanifolds 
    \[\Sigma_{z + u_\star(z,g)} = \Phi_g(\{z\} \times \S^{n-k})\]
form a smooth foliation of $M$. 
\end{corollary}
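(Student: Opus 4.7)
The plan is to show that $\Phi_g$ is smooth, has everywhere invertible differential, and is globally bijective; the foliation assertion then follows by transporting the standard foliation $\{\{z\}\times\S^{n-k}\}_{z\in\R^k}$ of $M$ through $\Phi_g$.

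First I would establish joint smoothness of $(z,\omega) \mapsto u_\star(z,g)(\omega)$. For any $\ell \ge 3$ and $\gamma \in (0,1)$, the implicit function theorem used in Proposition~\ref{foliation existence 1} in fact yields smoothness of the Banach space-valued map $g \mapsto u_\star(0,g) \in \mathring{\RC}^{\ell,\gamma}(\S^{n-k};\R^k)$. Composing with the smooth map $z \mapsto \tau_z^* g$, one obtains smoothness of $z \mapsto u_\star(z,g) = u_\star(0,\tau_z^*g)$ into $\mathring{\RC}^{\ell,\gamma}$. Applying this at arbitrarily large $\ell$, and using the uniqueness clause of Proposition~\ref{foliation existence 1} to guarantee consistency across levels, all mixed partial derivatives $\partial_z^\alpha \partial_\omega^\beta u_\star(z,g)(\omega)$ exist and are continuous on $\R^k \times \S^{n-k}$. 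Thus $\Phi_g$ is smooth. (Smoothness of $u_\star(z,g)$ in $\omega$ for fixed $z$ is separately supplied by Proposition~\ref{regularity}.)

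Next I would verify that $D\Phi_g$ is everywhere invertible. In the block decomposition $TM = T\R^k \oplus T\S^{n-k}$,
\[
D\Phi_g = \begin{pmatrix} \mathrm{Id} + \partial_z u_\star & \partial_\omega u_\star \\ 0 & \mathrm{Id} \end{pmatrix},
\]
so invertibility reduces to that of $\mathrm{Id} + \partial_z u_\star$. Since $g_0$ is translation-invariant, $\tau_z^* g_0 \equiv g_0$, and hence $\partial_z(\tau_z^* g) = \tau_z^*(\partial_z g)$ has $\RC^{\ell-1,\gamma}$-norm bounded by $\|g - g_0\|_{\RC^{\ell-1,\gamma}}$. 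Combined with the boundedness of $D_g u_\star(0,g)$ near $g_0$ from the IFT, this produces the uniform bound
\[
\sup_{z \in \R^k} \|\partial_z u_\star(z,g)\|_{\RC^{\ell-1,\gamma}(\S^{n-k};\R^k)} \le C(n,\ell,\gamma)\|g-g_0\|_{\RC^{\ell-1,\gamma}} \le C\varepsilon.
\]
Shrinking $\varepsilon$, the matrix $\mathrm{Id}+\partial_z u_\star$ is invertible at every $(z,\omega)$.

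For global bijectivity I would fix $\omega \in \S^{n-k}$ and consider the restricted map $G_\omega : \R^k \to \R^k$ given by $z \mapsto z + u_\star(z,g)(\omega)$. For any target $y \in \R^k$, the fixed-point equation $z = y - u_\star(z,g)(\omega)$ defines a strict contraction on $\R^k$ as soon as $\sup_{(z,\omega)}\|\partial_z u_\star\| < 1$, which holds by the previous step. Hence $G_\omega$ is a bijection of $\R^k$ for every $\omega$, so $\Phi_g$ is a bijection $M \to M$. Combined with nondegeneracy of $D\Phi_g$, $\Phi_g$ is a smooth diffeomorphism, and the images $\Phi_g(\{z\}\times\S^{n-k}) = \Sigma_{z+u_\star(z,g)}$ of the standard foliation of $M$ form a smooth foliation. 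The only subtle point I anticipate is the smoothness step, where passing from Banach space-valued smoothness in $z$ at each fixed $\ell$ to genuine joint smoothness of $u_\star(z,g)(\omega)$ requires running the IFT at every $\ell$ and invoking uniqueness to conclude that the solutions agree.
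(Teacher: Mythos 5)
Your argument follows the same route as the paper's, and your treatment of the diffeomorphism step is in fact more explicit than the paper's: the paper simply records $\|\Phi_g - \Id\|_{\RC^1(M)} \leq C\varepsilon$ and asserts the conclusion, whereas on the noncompact manifold $\R^k\times\S^{n-k}$ one does want something like your block decomposition of $D\Phi_g$ together with the fixed-point argument showing each restricted map $G_\omega$ is a bijection of $\R^k$. That portion of your proposal is a welcome expansion.

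However, your joint-smoothness argument has a genuine gap. You propose to re-run Proposition~\ref{foliation existence 1} at arbitrarily large $\ell$ and glue via uniqueness, but the threshold $\varepsilon = \varepsilon(n,\ell,\gamma)$ and the ball radius $\delta_0$ produced by that proposition depend on $\ell$. A fixed smooth $g$ with $\|g-g_0\|_{\RC^{\ell-1,\gamma}} \leq \varepsilon(n,\ell,\gamma)$ for one $\ell$ is not guaranteed to satisfy $\|g-g_0\|_{\RC^{\ell'-1,\gamma}} \leq \varepsilon(n,\ell',\gamma)$ for larger $\ell'$, so the implicit function theorem cannot simply be re-applied at each level. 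The paper instead combines smoothness of $z\mapsto u_\star(z,g)\in\RC^{\ell,\gamma}$ at the one fixed $\ell$ with Proposition~\ref{regularity}, the elliptic bootstrap of the QPMC equation, which raises $\omega$-regularity with no further $\varepsilon$-smallness hypothesis; this is what you only mention parenthetically, and it should be the main engine. A smaller bookkeeping point: $\partial_z(\tau_z^*g)$ is the pullback of $\partial_z g = \partial_z(g-g_0)$, so it is controlled in $\RC^{\ell-2,\gamma}$, not $\RC^{\ell-1,\gamma}$, by $\|g-g_0\|_{\RC^{\ell-1,\gamma}}$; the final bound $\sup_z\|\partial_z u_\star(z,g)\|\leq C\varepsilon$ still comes out after tracking regularity through $D_gJ$ and $(D_uJ)^{-1}$, but as written your norm indices are off by one.
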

\begin{proof}
The assignment
    \[g \mapsto u_\star(0, g)\]
is a smooth map from $\mathcal G$ to $\mathring{\RC}^{\ell,\gamma}(\S^{n-k};\R^k)$ and, since we are assuming $g$ is smooth, 
    \[z \mapsto \tau^*_z g\]
is a smooth map from $\mathbb{R}^k$ into $\mathcal G$. Combining these facts with Lemma~\ref{regularity}, we see that
    \[(z,\omega) \mapsto u_\star(z,g)(\omega)\]
is a smooth map from $M$ to $\mathbb{R}^k$, and hence $\Phi$ is smooth. By Proposition~\ref{foliation existence 1} we have
    \[\|\Phi_g - \Id\|_{\RC^1(M)} \leq C \varepsilon.\]
Therefore, if $\varepsilon$ is sufficiently small then $\Phi_g$ is a diffeomorphism. 
\end{proof}

We conclude this section with the proof of Theorem~\ref{main entire}.

\begin{proof}[Proof of Theorem~\ref{main entire}]
The existence component of the theorem follows immediately from Proposition~\ref{foliation existence 1} and Corollary~\ref{foliation existence 2}. To prove the uniqueness component, suppose $\Sigma$ is an embedded $(n-k)$-sphere in $M$ which has QPMC and is a graph, 
    \[\Sigma = \{(z+u(\omega), \omega) : \omega \in \S^{n-k}\},\]
such that
    \[\|u\|_{\RC^{\ell,\gamma}} \leq \delta.\]
Defining
    \[\bar u := \frac{1}{|\S^{n-k}|} \bigg(\int_{\S^{n-k}} u^1 \, \dvoll_{\S^{n-k}}, \dots, \int_{\S^{n-k}} u^k \, \dvoll_{\S^{n-k}}\bigg),\]
we have $u - \bar u \in \mathring{\RC}^{\ell,\gamma}(\S^{n-k};\R^k)$ and $\|u - \bar u\|_{\RC^{\ell,\gamma}} \leq 2\delta$. If $\varepsilon$ and $\delta$ are both sufficiently small then, by Proposition~\ref{foliation existence 1}, we have 
    \[u-\bar u = u_\star(z+\bar u, g),\]
which means $\Sigma$ is a leaf of the foliation provided by Proposition~\ref{foliation existence 1}. This completes the proof. 
\end{proof}


\section{Uniqueness and local foliations}\label{section uniqueness}

Let $g$ be a smooth metric on $M := \R^k \times \S^{n-k}$. We write $\class_g(\delta)$ for the class of all embedded $(n-k)$-spheres in $M$ which are $\delta$-vertical in the sense of Definition~\ref{delta vertical}. Note that when $g$ is close to $g_0$ the scale $\bar r$ appearing in that definition is close to $1$. 

Before proceeding with the main results of this section we establish some lemmas. First, elements of $\class_g(\delta)$ are graphical. 

\begin{lemma}\label{vertical implies graphical}
There are positive constants $\varepsilon$ and $\delta$, depending only on $n$, with the following property. Let $g$ be a metric on $M$ such that $\|g - g_0\|_{\RC^3} \leq \varepsilon$, and suppose $\Sigma \in \class_g(\delta)$. Then $\Sigma$ is the graph of a function $u:\S^{n-k} \to \R^k$. Moreover, there is a constant $C = C(n)$ such that 
    \begin{equation}\label{delta vertical C^3}
    \|du\|_{C^3(\S^{n-k})} \leq C\bigg(\|g-g_0\|_{C^{3}} +  \sum_{\ell = 0}^2  \sup_\Sigma |(\nabla^\perp)^\ell A_\Sigma|\bigg).
    \end{equation}
\end{lemma}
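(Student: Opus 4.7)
The plan is to argue in three stages. Stage one: use the curvature and diameter bounds to control the variation of the tangent plane along $\Sigma$. Stage two: locate, via a maximum argument, points where the tangent plane is demonstrably nearly vertical, and propagate this to every point of $\Sigma$. Stage three: deduce that the projection $\pi:M\to\S^{n-k}$ restricts to a diffeomorphism on $\Sigma$, giving the graph representation, and derive the quantitative $C^3$ bound \eqref{delta vertical C^3}.

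For stage one, I would observe that $A_\Sigma$ exactly measures the difference between the ambient and induced covariant derivatives on vector fields tangent to $\Sigma$. The bound $\bar r|A_\Sigma|\leq\delta$ together with $\diam_g(\Sigma)\leq 10\pi\bar r$ therefore implies that ambient $g$-parallel transport along any curve in $\Sigma$ differs from intrinsic parallel transport by an angle of order $\delta$. Consequently the tangent planes $T_p\Sigma$ at any two points $p,q\in\Sigma$ agree up to $O(\delta)$ after ambient parallel transport between them.

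For stage two, for each $a\in\{1,\dotsc,k\}$ the function $z^a|_\Sigma$ attains a maximum at some point $p_a\in\Sigma$, at which $\partial_{z^a}|_{p_a}\perp_g T_{p_a}\Sigma$. Since $\|g-g_0\|_{C^3}\leq\varepsilon$, the coordinate vectors $\{\partial_{z^a}\}_{a=1}^k$ are $O(\varepsilon)$-close to an orthonormal frame of the horizontal distribution in $(M,g)$ and are distorted by $O(\varepsilon)$ under ambient $g$-parallel transport over distances at most $10\pi\bar r$. Combining with stage one, at every $p\in\Sigma$ the plane $T_p\Sigma$ is $O(\delta+\varepsilon)$-close to being orthogonal to the entire horizontal distribution, and hence $O(\delta+\varepsilon)$-close to the vertical $(n-k)$-plane $T_{\pi(p)}\S^{n-k}$.

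For stage three, this near-verticality means that $d(\pi|_\Sigma)$ is close to the identity on tangent spaces, so $\pi|_\Sigma$ is a local diffeomorphism and, by compactness of $\Sigma$, a covering map onto $\S^{n-k}$. When $n-k\geq 2$ simple-connectedness of the target forces degree one; when $n-k=1$ a higher-degree cover would require successive revolutions in $\S^1$ separated by a $z$-drift of order $\delta$ yet mutually disjoint in $\R^k$, which fails for $\delta$ small. Hence $\Sigma=\{(u(\omega),\omega):\omega\in\S^{n-k}\}$. The pointwise bound $|du|\leq C(\delta+\|g-g_0\|_{C^1})$ follows from the tilt estimate of stage two; differentiating the coordinate identity that expresses $A_\Sigma$ in terms of $\nabla^2 u$ and $g$, and iterating through orders using that the leading symbol is uniformly elliptic, then yields the full $C^3$ bound \eqref{delta vertical C^3}. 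The main technical obstacle is stage two, where the $k$ separate orthogonality conditions at the (a priori distinct) maxima $p_a$ must be synthesized into a uniform near-verticality at every point of $\Sigma$, with $\delta$ and $\|g-g_0\|_{C^1}$ separately tracked so that the dependencies on the right-hand side of \eqref{delta vertical C^3} come out as stated.
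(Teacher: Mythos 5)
Your proposal follows essentially the same strategy as the paper: both locate the critical point of each height function $z^a|_\Sigma$, where verticality holds exactly, and then propagate near-verticality across $\Sigma$ using the bound on $A_\Sigma$ together with the diameter bound. The paper makes the propagation explicit through the identity
\[
\Hess_\Sigma u^a = \Hess z^a + g(A_\Sigma, (\grad z^a)^\perp), \qquad u^a := z^a|_\Sigma,
\]
which it integrates along $\Sigma$-geodesics from the maximum of $u^a$ to deduce $\sup_\Sigma|\grad_\Sigma u^a| \leq C(\|g-g_0\|_{\RC^1} + \sup_\Sigma|A_\Sigma|)$, and hence that $q_{ab} := g(\partial_{z^a}^\perp, \partial_{z^b}^\perp)$ is close to $\delta_{ab}$. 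Your parallel-transport comparison of tangent planes encodes the same estimate, but in a form that is harder to turn into the quantitative bound \eqref{delta vertical C^3}. That bound follows in the paper by differentiating the displayed identity twice more, picking up exactly $\|g-g_0\|_{\RC^3}$ and $\sup_\Sigma|(\nabla^\perp)^\ell A_\Sigma|$ for $\ell \le 2$; ellipticity of the graph operator, which you invoke, is not needed and would not by itself produce the stated right-hand side.

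One place your write-up is more careful than the paper's is in passing from ``$\pi|_\Sigma$ is a local diffeomorphism'' to ``$\Sigma$ is a graph'': the paper simply asserts graphicality once $\det(q_{ab})>0$. Your simple-connectedness argument settles $n-k\ge 2$. For $n-k=1$, however, your sketch does not work as stated: that successive sheets of a $d$-fold cover are pairwise disjoint in $\R^k$ yet separated by $z$-drift $O(\delta)$ is not a contradiction, since a chain of small but distinct displacements can wind around and close up after several revolutions. Indeed the embedded curve $s\mapsto(r\cos(s/2),\,r\sin(s/2),\,e^{is})$, $s\in[0,4\pi]$, in $\R^2\times\S^1$ is a double cover of $\S^1$ and belongs to $\class_{g_0}(\delta)$ for $r$ small, yet is not a graph; so the degree-one conclusion genuinely needs an additional hypothesis or argument when $n-k=1$, a point the paper's proof elides as well.
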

\begin{proof}
We first claim that $\Sigma$ is graphical. Let us write $q_{ab}$ for the matrix $g(\partial_{z^a}^\perp, \partial_{z^b}^\perp)$, where $1 \leq a, b \leq k$. It suffices to show that $\det(q_{ab}) > 0$ at every point of $\Sigma$. Let us define
    \[u^a := z^a|_{\Sigma}.\]
We insert
    \[\grad_\Sigma u^a = (\grad z^a)^\top = \grad z^a - (\grad z^a)^\perp\]
into the identity
    \[\Hess_\Sigma u (X,Y) = h(\nabla^\top_X(\grad_\Sigma u), Y) = g(\nabla_{X}(\grad_\Sigma u), Y),\]
in order to obtain
    \begin{equation}\label{Hessian of height}
        \Hess_\Sigma u^a(\cdot, \cdot) = \Hess z^a(\cdot, \cdot) + g(A_\Sigma(\cdot,\cdot), (\grad z^a)^\perp).
    \end{equation}
We then deduce the estimate
    \begin{equation}\label{delta vertical hessian}
        \sup_\Sigma |\Hess_\Sigma u^a| \leq C(\|g-g_0\|_{\RC^1} + \sup_\Sigma |A_\Sigma|).
    \end{equation}
There exists a maximal point for $u^a$ in $\Sigma$, at which  $\grad_\Sigma u^a = 0$. Using the previous inequality and integrating along geodesics from this maximal point, we obtain
    \begin{equation}\label{delta vertical gradient}
    \sup_\Sigma |\grad_\Sigma u^a| \leq C(\|g-g_0\|_{\RC^1} + \sup_\Sigma |A_\Sigma|),
    \end{equation}
where we have used the $\delta$-vertical condition to absorb $\diam_g(\Sigma)$ into $C$. We combine this inequality with the identity
    \[g((\grad z^a)^\perp, (\grad z^b)^\perp) = g(\grad z^a, \grad z^b) - g(\grad_\Sigma u^a, \grad_\Sigma u^b).\]
Up to errors which can be made arbitrarily small by choosing $\varepsilon$ and $\delta$ sufficiently small, the left-hand side is $q_{ab}$ and the right-hand side is $\delta_{ab}$. Consequently, $\det(q_{ab}) > 0$ if $\varepsilon$ and $\delta$ are sufficiently small. 

We may therefore express $\Sigma$ as a graph over $\S^{n-k}$. Conflating $u^a$ with its pullback to $\S^{n-k}$, we have 
    \[\Sigma = \{(u(\omega), \omega) : \omega \in \S^{n-k}\}.\]
The estimates \eqref{delta vertical hessian} and \eqref{delta vertical gradient}  imply 
    \[\|du\|_{\RC^1(\S^{n-k})} \leq C(\|g-g_0\|_{\RC^1} + \sup_\Sigma |A_\Sigma|).\]
By differentiating \eqref{Hessian of height} twice we get additional estimates for the third and fourth derivatives of $u$, such that \eqref{delta vertical C^3} holds.
\end{proof}

The next statement is a kind of converse to Lemma~\ref{vertical implies graphical}, asserting that sufficiently controlled graphs are in $\class_g(\delta)$. 

\begin{lemma}\label{graphical implies vertical}
Consider a smooth map $u : \S^{n-k} \to \R^k$ and let $\Sigma_u$ denote its graph. Given $\delta > 0$, if $\|g-g_0\|_{\RC^3}$ and $\|du\|_{\RC^3(\S^{n-k})}$ are sufficiently small (depending only on $n$ and $\delta)$ then $\Sigma_u \in \class_g(\delta)$. 
\end{lemma}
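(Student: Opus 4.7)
My plan is to verify each clause of Definition~\ref{delta vertical} by a continuity argument based at the reference configuration $(u,g) = (0, g_0)$, where $\Sigma_0 = \{0\}\times\S^{n-k}$ is totally geodesic and the ambient injectivity radius equals $\pi$ everywhere.

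First, I would handle the curvature quantities. Parametrizing $\Sigma_u$ by $F(\omega) = (u(\omega), \omega)$, the components of $A_{\Sigma_u}$ in any coordinate frame on $\S^{n-k}$ can be written as a universal smooth function of $du, d^2 u$ and the 1-jet of $g$ along $F$, while $\nabla^\perp A_{\Sigma_u}$ and $\nabla^\perp \nabla^\perp A_{\Sigma_u}$ depend smoothly on the 3- and 4-jets of $u$ together with the 2- and 3-jets of $g$, respectively. At the reference configuration $\{0\}\times\S^{n-k}$ is the fixed-point set of the isometry $(z,\omega)\mapsto(-z,\omega)$ of $(M,g_0)$, hence totally geodesic, so each of these normal-bundle-valued tensors vanishes identically. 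Consequently, if $\|du\|_{\RC^3(\S^{n-k})} + \|g-g_0\|_{\RC^3}$ is taken sufficiently small in terms of $n$ and $\delta$, then $|A_{\Sigma_u}|$, $|\nabla^\perp A_{\Sigma_u}|$ and $|\nabla^\perp\nabla^\perp A_{\Sigma_u}|$ are each at most $\delta/6$ uniformly on $\Sigma_u$.

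Next, I would control the scale $\bar r$ and the diameter simultaneously. Under $g_0$, every point of $M$ has injectivity radius $\pi$ (the cut locus of $(z,\omega)$ is $\R^k \times \{-\omega\}$, at distance $\pi$), so $\bar r = 1$ at the reference configuration; moreover any two points of $\Sigma_0$ lie at intrinsic distance at most $\pi$. For nearby $(u,g)$, the $F$-image of a minimising $g_0$-geodesic of $\S^{n-k}$ joining two given points has $g$-length at most $\pi(1 + C\|du\|_{\RC^0})(1+C\|g-g_0\|_{\RC^0})$, so after shrinking further we obtain $\diam_g(\Sigma_u) \leq 2\pi$ and $\bar r \in [1/2, 2]$, yielding simultaneously $\bar r|A_{\Sigma_u}| + \bar r^2|\nabla^\perp A_{\Sigma_u}| + \bar r^3|\nabla^\perp\nabla^\perp A_{\Sigma_u}| \leq \delta$ and $\diam_g(\Sigma_u)/\bar r \leq 4\pi < 10\pi$.

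The main technical obstacle in this plan is the continuity of the average injectivity radius entering $\bar r$, particularly since $M$ is non-compact. I would sidestep the non-compactness by noting that only the values of $\inj_g$ on the compact set $\Sigma_u$ contribute to $\bar r$, reducing the matter to a local continuity statement. For that statement I would combine a standard Jacobi-field comparison (giving the lower bound $\inj_g \geq \pi - o(1)$ from the uniform two-sided curvature bound implied by $\RC^3$-closeness to $g_0$) with a direct perturbation argument for the conjugate-free geodesic loop of length $2\pi$ realising $\inj_{g_0}$ (giving the matching upper bound).
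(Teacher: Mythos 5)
Your argument is sound and follows the same overall scheme as the paper's (bound $\diam_g(\Sigma_u)$, bound $\bar r$, bound the curvature quantities), but where the paper derives the curvature estimates explicitly from the graph identity $g(A_{\Sigma_u}(\cdot,\cdot),(\grad z^a)^\perp) = -\Hess z^a + \Hess_{\Sigma_u} u^a$ and its two covariant derivatives, arriving at the linear bound $\sum_{\ell=0}^{2}\sup_{\Sigma_u}|(\nabla^\perp)^\ell A_{\Sigma_u}| \le C(\|g-g_0\|_{\RC^3}+\|du\|_{\RC^3(\S^{n-k})})$, you argue more abstractly via smooth dependence on jets together with vanishing at the totally geodesic reference slice. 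Both work; the paper's version has the advantage of producing an explicit constant, paralleling the converse estimate \eqref{delta vertical C^3} in Lemma~\ref{vertical implies graphical}. You are in fact more thorough than the paper about $\bar r$: the paper only remarks at the start of Section~\ref{section uniqueness} that $\bar r$ is close to $1$ when $g$ is close to $g_0$, without proof. Two fixable imprecisions in your sketch of that step: (i) a Jacobi-field (Rauch) comparison only controls the distance to the first conjugate point, not $\inj_g$; to finish the lower bound you must also rule out short geodesic loops, e.g.\ by noting that $\exp_p^{g}$ is a uniform $\RC^1$-perturbation of $\exp_p^{g_0}$ on $\overline{B(0,\pi-\eta)}$, hence remains a diffeomorphism there. (ii) The length-$2\pi$ geodesic loop realising $\inj_{g_0}$ does pass through a conjugate point (the antipode on the $\S^{n-k}$-factor), so it is not ``conjugate-free''; the wording should be amended, though the perturbation idea survives. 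Finally, $\delta/6$ together with $\bar r \le 2$ gives $\bar r|A|+\bar r^2|\nabla^\perp A|+\bar r^3|\nabla^\perp\nabla^\perp A| \le \tfrac{14}{6}\delta > \delta$; replace $\delta/6$ by a smaller fraction (say $\delta/30$) so the arithmetic closes.
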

\begin{proof}
Consider a map $u : \S^{n-k} \to \R^k$ and let $\Sigma$ denote the graph of $u$. If $\|g-g_0\|_{\RC^0}$ and $\|du\|_{\RC^0(\S^{n-k})}$ are both small then the diameter of $\Sigma$ is close to $\pi$. The identity
    \begin{equation}\label{A of graph}
        g(A_\Sigma(\cdot,\cdot), (\grad z^a)^\perp) = - \Hess z^a(\cdot, \cdot) + \Hess_\Sigma u^a(\cdot, \cdot)
    \end{equation}
implies 
    \[\sup_\Sigma |A_\Sigma| \leq C(\|g-g_0\|_{\RC^1} + \|du\|_{\RC^1(\S^{n-k})}).\]
Differentiating \eqref{A of graph} twice leads to an estimate of the form 
    \begin{equation*}
    \sum_{\ell = 0}^2  \sup_\Sigma |(\nabla^\perp)^\ell A_\Sigma| \leq C(\|g-g_0\|_{C^{3}} + \|du\|_{C^3(\S^{n-k})}),
    \end{equation*}
so if $\|g-g_0\|_{\RC^3}$ and $\|du\|_{\RC^3(\S^{n-k})}$ are small then $\Sigma \in \class_g(\delta)$. 
\end{proof}

Recall that Proposition~\ref{foliation existence 1} and Corollary~\ref{foliation existence 2} established the existence of a QPMC foliation for metrics $g$ close to $g_0$, given by $\Sigma_{z + u_\star(z,g)}$ for $z \in \R^k$. We henceforth refer to this as the canonical foliation for $g$. 

If $g$ is sufficiently close to $g_0$ then the leaves of its canonical foliation are in $\class_g(\delta)$. 

\begin{lemma}\label{foliation = delta vertical}
Given $\delta >0$ there exists $\varepsilon = \varepsilon(n,\delta)$ such that if $\|g-g_0\|_{\RC^4} \leq \varepsilon$ then the leaves of the canonical foliation for $g$ are in $\class_g(\delta)$. 
\end{lemma}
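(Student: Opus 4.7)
The plan is to combine the quantitative estimate from Proposition~\ref{foliation existence 1} with the graphical sufficient condition for $\delta$-verticality established in Lemma~\ref{graphical implies vertical}. Fix a Hölder exponent, say $\gamma = 1/2$, and apply Proposition~\ref{foliation existence 1} with $\ell = 4$. For every $z \in \R^k$ the correction $u_\star(z,g) \in \mathring{\RC}^{4,\gamma}(\S^{n-k};\R^k)$ satisfies
\[
\|u_\star(z,g)\|_{\RC^{4,\gamma}} \leq C(n)\, \|g - g_0\|_{\RC^{3,\gamma}} \leq C(n)\, \|g - g_0\|_{\RC^4},
\]
uniformly in $z$ (this uniformity is built into the proof of Proposition~\ref{foliation existence 1} via translation invariance of $g_0$). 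Since $z$ contributes only a constant to the map defining the leaf $\Sigma_{z+u_\star(z,g)}$, one has $d(z + u_\star(z,g)) = du_\star(z,g)$, and hence $\|d(z+u_\star(z,g))\|_{\RC^3} \leq C(n)\,\varepsilon$ by the trivial inclusion $\RC^{4,\gamma} \hookrightarrow \RC^4$.

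I would then feed $u := z + u_\star(z,g)$ into Lemma~\ref{graphical implies vertical}. That lemma requires both $\|g - g_0\|_{\RC^3}$ and $\|du\|_{\RC^3(\S^{n-k})}$ to be small in terms of $n$ and $\delta$, and both are now controlled by $C(n)\,\varepsilon$. Choosing $\varepsilon = \varepsilon(n,\delta)$ sufficiently small yields $\Sigma_{z+u_\star(z,g)} \in \class_g(\delta)$ for every $z$, which is the claim.

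This is essentially a bookkeeping lemma whose role is to reconcile two different parameterisations of the leaves: the one coming from the $\RC^{\ell,\gamma}$-implicit function theorem used in Section~\ref{section existence}, and the pointwise definition of $\class_g(\delta)$ via bounds on $A_\Sigma$, $\nabla^\perp A_\Sigma$ and $(\nabla^\perp)^2 A_\Sigma$. I do not anticipate any real obstacle; the only decision of substance is to take $\ell = 4$ in Proposition~\ref{foliation existence 1}, since this is the smallest integer value that provides enough regularity to match the $\RC^3$ demand on $du$ appearing in Lemma~\ref{graphical implies vertical}. The scale $\bar r$ entering the definition of $\class_g(\delta)$ is close to $1$ in the regime $\|g - g_0\|_{\RC^4} \leq \varepsilon$, as already noted at the start of the section, so it does not affect the smallness thresholds.
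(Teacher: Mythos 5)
Your argument is correct and is essentially the paper's proof: the paper likewise applies the estimate from Proposition~\ref{foliation existence 1} to get $\|du_\star\|_{\RC^3} \leq C\|g-g_0\|_{\RC^4} \leq C\varepsilon$ uniformly over $z$, and then invokes Lemma~\ref{graphical implies vertical}. You merely spell out the Hölder-space bookkeeping (choice of $\gamma$, $\ell=4$, the embeddings) that the paper leaves implicit.
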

\begin{proof}
For each leaf $\Sigma_{z + u_\star}$ we have 
    \[\|du_\star\|_{\RC^3(\S^{n-k})} \leq C\|g-g_0\|_{\RC^4} \leq C\varepsilon,\]
by Proposition~\ref{foliation existence 1}. Therefore, if $\varepsilon$ is small then Lemma~\ref{graphical implies vertical} guarantees that $\Sigma_{z + u_\star} \in \class_g(\delta)$. 
\end{proof}

The next lemma strengthens the uniqueness statement in Proposition~\ref{foliation existence 1}.

\begin{lemma}\label{extrinsic uniqueness}
There are positive constants $\varepsilon$ and $\delta$, depending only on $n$, with the following property. Let $g$ be a metric on $M$ such that $\|g - g_0\|_{\RC^3} \leq \varepsilon$ and suppose $\Sigma \in \class_g(\delta)$ has QPMC. Then $\Sigma$ belongs to the canonical foliation for $g$.
\end{lemma}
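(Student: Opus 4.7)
The plan is to reduce the statement to the uniqueness clause of Proposition~\ref{foliation existence 1} by passing to graphical coordinates. First, since $\varepsilon$ and $\delta$ are small (depending only on $n$), Lemma~\ref{vertical implies graphical} applies and shows that $\Sigma$ is graphical over $\S^{n-k}$, say $\Sigma = \Sigma_u$ for some $u : \S^{n-k} \to \R^k$, with
    \[\|du\|_{\RC^3(\S^{n-k})} \le C(n)\bigl(\|g-g_0\|_{\RC^3} + \delta\bigr) \le C(n)(\varepsilon + \delta).\]
Define $\bar u \in \R^k$ to be the coordinate-wise average of $u$ on $\S^{n-k}$, and set $\tilde u := u - \bar u$, so that $\tilde u \in \mathring{\RC}^\infty(\S^{n-k};\R^k)$. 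Integrating $du$ from a point where $\tilde u$ vanishes, we get $\|\tilde u\|_{\RC^0} \le C(n)\|du\|_{\RC^0}$, and combining this with the derivative estimate gives $\|\tilde u\|_{\RC^4(\S^{n-k})} \le C(n)(\varepsilon+\delta)$.

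Next, I will apply Proposition~\ref{foliation existence 1} with $\ell = 3$ and some fixed $\gamma \in (0,1)$. Its hypotheses demand a $\RC^{2,\gamma}$-bound on $g - g_0$, which follows from the assumed $\RC^3$-bound via the continuous inclusion $\RC^3 \hookrightarrow \RC^{2,\gamma}$. Similarly, $\RC^4 \hookrightarrow \RC^{3,\gamma}$ on the compact manifold $\S^{n-k}$ with a fixed constant, so
    \[\|\tilde u\|_{\RC^{3,\gamma}(\S^{n-k})} \le C(n,\gamma)(\varepsilon + \delta),\]
which can be made smaller than the radius $\delta_0(n,3,\gamma)$ of the uniqueness ball in Proposition~\ref{foliation existence 1} by further shrinking $\varepsilon$ and $\delta$ in terms of $n$.

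Finally, I will invoke translation invariance. Write $\tau_{\bar u} : M \to M$ for the translation $(z,\omega) \mapsto (z + \bar u, \omega)$, as in the proof of Proposition~\ref{foliation existence 1}. Pulling back the ambient metric by $\tau_{\bar u}$ sends the graph of $u$ to the graph of $\tilde u$ and preserves the QPMC condition, so $\Sigma_{\tilde u}$ has QPMC with respect to $\tau_{\bar u}^* g$. Since $g_0$ is translation-invariant, $\|\tau_{\bar u}^* g - g_0\|_{\RC^{2,\gamma}} = \|g - g_0\|_{\RC^{2,\gamma}}$, so Proposition~\ref{foliation existence 1} applies to $\tau_{\bar u}^* g$, and $\tilde u$ lies in its uniqueness ball. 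The unique solution there is $u_\star(0, \tau_{\bar u}^* g) = u_\star(\bar u, g)$, and hence $u = \bar u + u_\star(\bar u, g)$, identifying $\Sigma$ with the leaf over $z = \bar u$ of the canonical foliation. The main obstacle is really just verifying the $\RC^{3,\gamma}$-smallness of $\tilde u$ with constants depending only on $n$; this is handled above by the chain $\text{$\delta$-vertical} \Rightarrow \RC^4\text{-small} \Rightarrow \RC^{3,\gamma}\text{-small}$, together with the translation trick which keeps us within the ball where Proposition~\ref{foliation existence 1} guarantees uniqueness.
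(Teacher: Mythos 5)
Your proposal is correct and follows essentially the same route as the paper's proof: apply Lemma~\ref{vertical implies graphical} to get a graphical representation $\Sigma = \Sigma_u$, subtract the mean $\bar u$ to land in $\mathring{\RC}^{\ell,\gamma}$, deduce smallness of $u - \bar u$ from the derivative bounds, and then invoke the uniqueness clause of Proposition~\ref{foliation existence 1} via the translation $\tau_{\bar u}$. You are merely more explicit than the paper about the Hölder embedding $\RC^4 \hookrightarrow \RC^{3,\gamma}$ and the translation-invariance bookkeeping, but these are exactly the steps the paper is implicitly relying on.
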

\begin{proof}
By Lemma~\ref{vertical implies graphical}, we may assume $\delta$ is sufficiently small so that $\Sigma$ is the graph of a smooth map $u : \S^{n-k} \to \R^k$. We define 
    \[z := \frac{1}{|\S^{n-k}|}\bigg(\int_{\S^{n-k}} u^1  \, \dvoll_{\S^{n-k}},\dots, \int_{\S^{n-k}} u^k \, \dvoll_{\S^{n-k}}\bigg),\]
so that the components of $u-z$ are mean-zero functions. The derivative bounds for $u$ in Lemma~\ref{vertical implies graphical} then imply
    \[\|u-z\|_{\RC^4(\S^{n-k})} \leq C(\delta + \varepsilon).\]
If $\delta$ and $\varepsilon$ are both small then by the uniqueness statement in Proposition~\ref{foliation existence 1} we conclude that
    \[u-z = u_\star(0,\tau_z^*g) = u_\star(z,g).\]
The claim follows.
\end{proof}

We are now prepared to prove a local version of Theorem~\ref{main entire}. 

\begin{proposition}\label{local foliation}
For every $\delta > 0$ there exists a constant $\varepsilon = \varepsilon(n,\delta)$ with the following property. Let $g$ be a smooth Riemannian metric on $B^k(0,L) \times \S^{n-k}$, where $L \geq 1000$. If $\|g-g_0\|_{\RC^4} \leq \varepsilon$ then there is an open subset
    \[B^k(0,L-20) \times \S^{n-k}\subset \mathcal B \subset B^k(0,L-10) \times \S^{n-k}\]
which is foliated by $(n-k)$-spheres of class $\class_g(\delta)$ with QPMC. Moreover, for sufficiently small $\delta$ (depending only on $n$) we have the following uniqueness statement: if $\Sigma \subset \CB$ is of class $ \class_g(\delta)$ and has QPMC, then $\Sigma$ is a leaf of the foliation. 
\end{proposition}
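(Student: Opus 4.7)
The plan is to reduce the local problem to the global setting of Theorem~\ref{main entire} by extending $g$ to a nearby metric on all of $\R^k \times \S^{n-k}$, harvesting the global canonical foliation via Proposition~\ref{foliation existence 1} and Corollary~\ref{foliation existence 2}, and then restricting to leaves which stay inside the region where the extension coincides with $g$. The observation making this reduction work is that the defining equation $(1-Q)(H) = 0$ and the condition of $\delta$-verticality both depend only on the ambient metric in an arbitrarily small neighbourhood of $\Sigma$: the normal Laplacian $\Delta^\perp$, its spectrum, the projection $Q$, the mean curvature $H$, and the second fundamental form are all intrinsic to such a neighbourhood.

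First I would fix a smooth radial cutoff $\chi : \R^k \to [0,1]$ equal to $1$ on $B^k(0, L-5)$ and vanishing outside $B^k(0, L-1)$, and set
\[
\tilde g := g_0 + \chi \cdot (g - g_0)
\]
on $M := \R^k \times \S^{n-k}$ (extending $g - g_0$ by zero outside $B^k(0,L) \times \S^{n-k}$). Then $\tilde g$ is a smooth Riemannian metric on $M$ which agrees with $g$ on $B^k(0, L-5) \times \S^{n-k}$, equals $g_0$ outside $B^k(0, L-1) \times \S^{n-k}$, and satisfies $\|\tilde g - g_0\|_{\RC^4} \leq C(n)\varepsilon$. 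Applying Proposition~\ref{foliation existence 1}, Corollary~\ref{foliation existence 2} and Lemma~\ref{foliation = delta vertical} to $\tilde g$ (with $\varepsilon$ small depending on $n$ and $\delta$), we obtain the canonical diffeomorphism $\Phi_{\tilde g}$ and foliation $\{\Sigma_{z + u_\star(z, \tilde g)}\}_{z \in \R^k}$, whose leaves are of class $\class_{\tilde g}(\delta)$, have QPMC with respect to $\tilde g$, and satisfy $\|\Phi_{\tilde g} - \mathrm{Id}\|_{\RC^0(M)} \leq C\varepsilon < 1$.

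Next I would define $\CB := \Phi_{\tilde g}(B^k(0, L-15) \times \S^{n-k})$. The bound on $\Phi_{\tilde g} - \mathrm{Id}$ immediately yields the inclusions
\[
B^k(0, L-20) \times \S^{n-k} \subset \CB \subset B^k(0, L-10) \times \S^{n-k}.
\]
Every leaf of the restricted foliation lies inside $B^k(0, L-10) \times \S^{n-k} \subset B^k(0, L-5) \times \S^{n-k}$, which is precisely the region where $\tilde g = g$. By the locality of the QPMC equation and of $\delta$-verticality noted above, each such leaf is also QPMC and $\delta$-vertical with respect to $g$. This establishes the existence part of the proposition.

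For the uniqueness statement, suppose $\Sigma \subset \CB$ is QPMC and in $\class_g(\delta)$. Since $\Sigma \subset \CB \subset B^k(0, L-10) \times \S^{n-k}$, the metrics $g$ and $\tilde g$ agree in a neighbourhood of $\Sigma$, so $\Sigma$ is QPMC and $\delta$-vertical with respect to $\tilde g$ as well. Lemma~\ref{extrinsic uniqueness}, applied on $(M, \tilde g)$, identifies $\Sigma$ as a leaf $\Sigma_{z + u_\star(z, \tilde g)}$ of the canonical foliation for $\tilde g$, and the constraint $\Sigma \subset \CB$ forces $z \in B^k(0, L-15)$, so $\Sigma$ is one of our leaves. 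The main subtlety throughout is purely bookkeeping, namely keeping track of the nested inclusions and repeatedly invoking locality to pass between $g$ and $\tilde g$; the generous hypothesis $L \geq 1000$ provides ample room for every perturbation that appears.
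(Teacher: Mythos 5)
Your proof follows essentially the same route as the paper's: extend $g$ to a metric on all of $\R^k\times\S^{n-k}$ which is still $C\varepsilon$-close to $g_0$, invoke Proposition~\ref{foliation existence 1}, Corollary~\ref{foliation existence 2}, and Lemma~\ref{foliation = delta vertical} to obtain a global canonical foliation with $\delta$-vertical leaves, carve out $\CB$ as a union of those leaves lying in the interior where the extension agrees with $g$, and dispatch uniqueness via Lemma~\ref{extrinsic uniqueness} together with the locality of the QPMC and $\delta$-vertical conditions. Your version is merely more explicit than the paper's (you specify the cutoff $\chi$ and the choice $\CB = \Phi_{\tilde g}(B^k(0,L-15)\times\S^{n-k})$), but the strategy and the supporting lemmas invoked are identical.
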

\begin{proof}
We may extend $g$ to a smooth metric on $M$ such that 
    \[\|g - g_0\|_{\RC^4(M)} \leq C \|g-g_0\|_{\RC^4(B^k(0,L) \times \S^{n-k})} \leq C\varepsilon\]
for some universal $C$. If $\varepsilon$ is small enough then  Corollary~\ref{foliation existence 2} provides a canonical foliation for the extension $g$. (The uniqueness statement in Proposition~\ref{foliation existence 1} ensures that the portion of this foliation inside $B^k(0,L-10) \times \S^{n-k}$ does not depend on how we extended $g$.) By Lemma~\ref{foliation = delta vertical}, if $\varepsilon$ is sufficiently small then the leaves of the foliation are in $\class_g(\delta)$. It then suffices to take $\CB$ to be a union of leaves such that 
    \[B^k(0,L-20) \times \S^{n-k}\subset \mathcal B \subset B^k(0,L-10) \times \S^{n-k}.\]
Now suppose $\Sigma \subset \CB$ is of class $ \class_g(\delta)$ and has QPMC. By Lemma~\ref{extrinsic uniqueness}, if $\delta$ is sufficiently small then $\Sigma$ is a leaf of the canonical foliation for $g$, and hence is a leaf of the foliation for $\CB$.
\end{proof}

We conclude this section with the proof of Theorem~\ref{main local}.

\begin{proof}
Fix a constant $L \geq 1000$ and let $\CC \subset M$ be an $(\varepsilon, L, n-k)$-cylindrical region of a complete Riemannian manifold $(M,g)$. We recall what this means: for each $p \in \CC$ there is a scale $r(p) > 0$ and an embedding 
    $F_p : B^k(0,L) \times \S^{n-k} \to M$
such that $F_p^*(r(p)^{-2}g)$ is $\varepsilon$-close to $g_0$ in $\RC^{[1/\varepsilon]}$. 

By Proposition~\ref{local foliation}, if $\varepsilon$ is sufficiently small then for each $p \in \CC$ there is an open set 
    \[F_p(B^k(0,L-20) \times \S^{n-k}) \subset \mathcal B^+_p \subset F_p(B^k(0,L-10)\times\S^{n-k})\]
which is foliated by embedded $(n-k)$-spheres with QPMC. We denote the collection of leaves of this foliation by $\mathcal F_p^+$. For any $\delta>0$, we can assume $\varepsilon$ is sufficiently small that the leaves of $\mathcal F_p^+$ are $\delta$-vertical (because of Lemma~\ref{foliation = delta vertical}). For each $p \in \CC$ we now define $\mathcal \CB_p$ to be a connected open subset of $\CB_p^+$ which is a union of leaves in $\mathcal F_p^+$ and satisfies 
    \[F_p(B^k(0,80) \times \S^{n-k}) \subset \mathcal B_p \subset F_p(B^k(0,90)\times\S^{n-k}).\]
Let $\mathcal F_p$ denote the collection of leaves in $\mathcal F_p^+$ which are contained in $\mathcal B_p$. 

We define $\mathcal B  := \cup_{p \in \CC}\mathcal B_p$ and observe that $\CB$ is an open subset of $M$ which contains $\CC$. We claim that the foliations $\mathcal F_p$ can be adjoined into a foliation of $\mathcal B$. So we consider points $p$ and $q$ such that $\CB_p \cap \CB_q$ is nonempty. For any $x \in \CB_p \cap \CB_q$, let $\Sigma_q(x)$ denote the leaf of $\mathcal F_q$ passing through $x$. The pullback of $\Sigma_q(x)$ by $F_p$ is of class $\class(\delta)$ in the metric $F_p^*(r(p)^{-2}g)$, so if $\delta$ is small enough we may invoke the uniqueness statement in Proposition~\ref{local foliation} to conclude that $\Sigma_q(x) \in \mathcal F_p^+$. Given that $x \in \mathcal{B}_p$, we therefore have $\Sigma_q(x) \subset \CB_p$ or equivalently $\Sigma_q(x) \in \mathcal F_p$. Since $x$ was arbitrary it follows that $\CB_p \cap \CB_q$ is a union of leaves of $\mathcal F_q$, each of which is also in $\mathcal F_p$. This is true for any $p$ and $q$ with $\CB_p \cap \CB_q$ nonempty, so $\cup_p \mathcal F_p$ is indeed a foliation for $\mathcal B$.

To conclude, suppose $\Sigma$ is an embedded $(n-k)$-sphere in $M$ of class $\mathscr{S}(\delta)$, which has QPMC and intersects $\mathcal C$ at some point $p$. Then $\Sigma$ belongs to $\mathcal F_p$ by the uniqueness statement in Proposition~\ref{local foliation}. This completes the proof.
\end{proof}

\section{Metrics with no PMC foliation}\label{section example}

A submanifold $\Sigma$ of an ambient space $(M,g)$ has parallel mean curvature (PMC) if $\nabla^\perp H = 0$ holds everywhere on $\Sigma$. 

Theorem~\ref{main entire} asserts that if a metric on $\R^k \times \S^{n-k}$ is close enough to the standard one, then the slices $\{z\}\times\S^{n-k}$ can be perturbed to a canonical foliation by QPMC-spheres. It is natural to wonder whether there is in fact a foliation by spheres with PMC (or, rather, since the QPMC foliation is unique, and PMC implies QPMC, whether the leaves of our canonical foliation actually have PMC). We demonstrate that this is not the case in general---there are metrics on $\R^2 \times \S^1$ which are arbitrarily close to the standard one, but cannot be foliated by approximately vertical spheres with PMC. 

Let us sketch the construction of such a metric. We write $M = \R^2 \times \S^1$ and as usual denote the standard product metric on $M$ by $g_0$. For each metric $g$ on $M$ which is $\RC^4$-close to $g_0$, we have a canonical QPMC foliation by Theorem~\ref{main entire}. For each $(z,\omega) \in M$ we denote by $\Sigma_g(z,\omega)$ the leaf of this foliation which contains $(z,\omega)$. 

\begin{lemma}\label{example Berger}
There exists a metric $g$ on $M$ which is $\RC^4$-close to $g_0$ such that $g$ has positive sectional curvatures in $\{|z| \leq 110\}$ and $\Sigma_g(z,\omega)$ is a stable geodesic for every $|z| \leq 100$.
\end{lemma}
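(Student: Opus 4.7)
The plan is to construct $g$ as a Kaluza--Klein-type deformation of $g_0$ of the form
\[g = g_B + (d\theta + \omega)^2,\]
where $g_B$ is a smooth metric on $\R^2$ equal to $g_{\R^2}$ outside $B^2(0, 200)$ and with Gaussian curvature $K_{g_B} \geq \kappa_0 > 0$ on $B^2(0, 120)$ (easily realised by a small conformal bump), and $\omega$ is a smooth 1-form on $\R^2$ supported in $B^2(0, 200)$ and equal to $\tfrac{\tau}{2}(x\,dy - y\,dx)$ on $B^2(0, 100)$, so that $d\omega = \tau\,dx \wedge dy$ there. Choosing $\kappa_0$ and $\tau$ both small, with $\tau \ll \sqrt{\kappa_0}$, makes $\|g - g_0\|_{\RC^4}$ as small as required. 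Then $\pi:(M, g) \to (\R^2, g_B)$ is a Riemannian submersion whose vertical Killing field $\partial_\theta$ has constant unit length, so the fibres $\{z\}\times\S^1$ are totally geodesic; each vertical circle is a closed geodesic in $(M, g)$.

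Because each vertical circle is a geodesic, its second fundamental form and mean curvature both vanish identically, so it has QPMC trivially and, once $\|g-g_0\|_{\RC^4}$ is sufficiently small, lies in $\class_g(\delta)$ for any prescribed $\delta > 0$. Lemma~\ref{extrinsic uniqueness} then forces it to coincide with the corresponding leaf of the canonical foliation, whence $\Sigma_g(z, \omega) = \{z\} \times \S^1$ throughout the region of interest.

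The positivity of sectional curvatures on $\{|z| \leq 110\}$ follows from O'Neill's formulas for a submersion with totally geodesic fibres: for horizontal unit $X, Y$ and vertical unit $T$,
\[K_g(X, Y) = K_{g_B}(\bar X, \bar Y) - \tfrac{3}{4}\bigl(d\omega(\bar X, \bar Y)\bigr)^2, \qquad K_g(X, T) = \tfrac{1}{4}|d\omega|^2,\]
both of which are strictly positive once $\kappa_0 > \tfrac{3}{4}|d\omega|^2$ on the region, which is ensured by taking $\tau$ small relative to $\sqrt{\kappa_0}$.

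Finally, for stability of $\gamma = \{z\}\times\S^1$ with $|z| \leq 100$, a short Koszul calculation gives $\nabla_T X_1 = \tfrac{F}{2} X_2$ and $\nabla_T X_2 = -\tfrac{F}{2} X_1$, where $F := d\omega(\bar X_1, \bar X_2) \equiv \tau$. The mixed-plane curvature term $R(V, T, T, V) = \tfrac{F^2}{4}|V|^2$ then cancels exactly against the $F^2$-contribution in $|\nabla^\perp V|^2$ for a normal variation $V = \hat a X_1 + \hat b X_2$, reducing the index form to
\[I(V, V) = \int_0^{2\pi}\bigl[(\hat a')^2 + (\hat b')^2 + F(\hat a \hat b' - \hat a' \hat b)\bigr]\, d\theta.\]
A Fourier expansion $\hat a + i\hat b = \sum_n c_n e^{in\theta}$ yields $I(V, V) = 2\pi \sum_n n(n+F)|c_n|^2 \geq 0$ whenever $|F| \leq 1$, with equality only on the constant modes, which span the expected two-dimensional kernel of horizontal translations. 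Hence $\gamma$ is a stable closed geodesic. The main delicacy of the argument is this exact cancellation, which produces stability despite positive ambient curvature, made possible by the nontrivial twist $d\omega \neq 0$.
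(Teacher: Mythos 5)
Your proof is correct, but takes a genuinely different route from the paper. The paper constructs $g$ by pulling back a rescaled Berger metric $g_\kappa$ on $\S^3 \cong SU(2)$ via an exponential map based along a Hopf fibre, and then cites Torralbo--Urbano for the stability of the fibre geodesics. You instead build a Kaluza--Klein metric $g = g_B + (d\theta + \omega)^2$ directly on $\R^2 \times \S^1$, verify positive sectional curvature via O'Neill's formulas, and establish stability of the fibres by computing the index form explicitly, showing the exact cancellation between $R(V,T,T,V) = \tfrac{F^2}{4}|V|^2$ and the $F^2$-quadratic term in $|\nabla^\perp V|^2$, and then diagonalising the remaining quadratic form $I(V,V) = 2\pi\sum_n n(n+F)|c_n|^2$ in Fourier modes. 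The two constructions are geometrically the same in spirit (a locally homogeneous circle-bundle metric over a positively curved surface, with small positive base curvature and small twisting), but yours is self-contained and avoids both the citation and the exponential-map argument used to transplant the Berger metric to $\R^2\times\S^1$; the paper's version is shorter because it can lean on the known Berger-sphere stability result. One small imprecision: with $g_B = e^{2\phi}g_{\R^2}$ and $d\omega = \tau\,dx\wedge dy$, the quantity $F = d\omega(\bar X_1, \bar X_2)$ computed against a $g_B$-orthonormal frame is $e^{-2\phi}\tau$ rather than exactly $\tau$, but since you take $\kappa_0$ and $\tau$ small this changes nothing; it is only worth stating because the Fourier computation assumes $F$ is constant along each fibre, which does hold since $\phi$ and $\tau$ depend only on the base point and each fibre sits over a single base point. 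Your identification of the two-dimensional kernel with the $n=0$ modes (horizontal translations) is the correct description of the generic degeneracy of a foliation by geodesics, matching what the paper later exploits in its bumpy-metric argument.
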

\begin{proof}
To begin with consider $\S^{3}$ equipped with the unit round metric and identified with $SU(2)$. Let $E_\alpha$ be a left-invariant orthonormal frame, and $\omega^\alpha$ its dual frame. For each $\kappa > 0$ we have a Berger metric
    \[\omega^1\otimes\omega^1 + \omega^2\otimes\omega^2 + \kappa^2 \omega^3 \otimes \omega^3.\]
We will find it convenient to scale this metric by $\kappa^{-2}$ and define
    \[g_\kappa := \kappa^{-2} \omega^1\otimes\omega^1 + \kappa^{-2} \omega^2\otimes\omega^2 + \omega^3 \otimes \omega^3.\]
The sectional curvatures of $g_\kappa$ are 
    \[K(E_1, E_2) = \kappa^2(4-3\kappa^2), \qquad K(E_1, E_3) = \kappa^4, \qquad K(E_2,E_3) = \kappa^4.\]
The integral curves of $E_3$ are closed geodesics of length $2\pi$ with respect to $g_\kappa$, and we may assume $\kappa$ is sufficiently small so that each of these geodesics is stable (see \cite[Proposition~6]{Torralbo--Urbano}).

Fix any point $p \in \S^3$. We identify the unit-speed geodesic tangent to $E_3$ and passing through $p$ with $\S^1$. Consider then the map $M \to \S^3$ given by 
    \[(z, \omega) \mapsto \exp_\omega(\kappa z^1 E_1 + \kappa z^2 E_2)\]
where the exponential map is with respect to $g_\kappa$. The pullback of $g_\kappa$ by this map converges in $\RC^\infty_{\loc}$ to $g_0$ as $\kappa \to 0$, so let us suppose $\kappa$ is small and define a new metric $g$ defined everywhere in $M$ which agrees with the pullback of $g_\kappa$ in $\{|z| \leq 110\}$ and is globally $\RC^4$-close to $g_0$. Then $g$ has positive sectional curvatures in $\{|z| \leq 110\}$. Moreover, by uniqueness of the QPMC foliation for $g$, if $\kappa$ is small enough then $\Sigma_g(z,\omega)$ is a stable geodesic for every $|z| \leq 100$, obtained as the pullback of an integral curve of $E_3$.
\end{proof}

Combined with the second variation formula for length, Lemma~\ref{example Berger} has the following consequence. 

\begin{lemma}\label{example stable}
There exists a metric on $M$ which is $\RC^4$-close to $g_0$ such that, for $|z| \leq 100$, the normal bundle of $\Sigma_g(z,\omega)$ admits no nonzero global parallel sections. In particular, for each of these submanifolds $\ker(\Delta^\perp) = \{0\}$. 
\end{lemma}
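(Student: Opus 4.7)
The plan is to take the metric $g$ provided by Lemma~\ref{example Berger} and combine the stability of the closed geodesic $\Sigma_g(z,\omega)$ with the positivity of the ambient sectional curvatures to rule out any nonzero parallel section of its normal bundle. The main tool is the classical second variation formula for arclength, and the step requiring the most care is verifying that the geodesic actually lies inside the region where positive curvature is guaranteed.

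First I would fix $|z| \leq 100$ and assume, for contradiction, that the normal bundle of the closed geodesic $\gamma := \Sigma_g(z,\omega)$ admits a nonzero global parallel section $V$, so that $\nabla^\perp_T V = 0$ with $T$ the unit tangent. Because $\gamma$ is a closed embedded curve and $V$ is globally defined along it, $V$ is an admissible periodic normal variation field for the length functional on loops.

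Next I would apply the standard second variation formula
\begin{equation*}
\delta^2 L(V,V) \;=\; \int_\gamma \left(|\nabla^\perp_T V|^2 - g(R(V,T)T,V)\right) ds.
\end{equation*}
The first integrand vanishes because $V$ is parallel, while the second equals $K(V,T)\,|V|^2$, where $K(\cdot,\cdot)$ denotes sectional curvature, since $V \perp T$ and $|T|=1$. By Lemma~\ref{example Berger} the metric $g$ has strictly positive sectional curvatures throughout $\{|z| \leq 110\}$, and $\gamma$ is contained in this slab: indeed $|z| \leq 100$, and for $\varepsilon$ small enough the leaves of the canonical QPMC foliation are $\delta$-vertical by Lemma~\ref{foliation = delta vertical}, which guarantees the leaf through $(z,\omega)$ stays graphical and close to $\{z\}\times\S^1$. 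Consequently $\delta^2 L(V,V) < 0$, contradicting the stability of $\gamma$ asserted in Lemma~\ref{example Berger}. This gives the first assertion.

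The ``in particular'' clause is then immediate: as recalled in Section~\ref{notation section}, for any compact submanifold $\ker(\Delta^\perp)$ coincides with the space of parallel sections of the normal bundle, so the absence of nonzero parallel sections forces $\ker(\Delta^\perp) = \{0\}$. I do not foresee a serious obstacle in this argument; the only technical point worth isolating is the inclusion $\gamma \subset \{|z| \leq 110\}$, which is the reason Lemma~\ref{example Berger} was stated with the wider slab $\{|z|\leq 110\}$ rather than $\{|z|\leq 100\}$.
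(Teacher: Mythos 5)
Your argument is correct and is essentially the paper's proof: take the metric from Lemma~\ref{example Berger}, feed a putative nonzero parallel normal section into the second variation of arclength, observe the gradient term vanishes and the curvature term is strictly positive, and contradict stability. Your extra remark that $\delta$-verticality keeps the leaf inside $\{|z|\le 110\}$ is a valid justification of a point the paper leaves implicit.
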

\begin{proof}
It suffices to take $g$ to be a metric with the properties referred to in Lemma~\ref{example Berger}. Then for every $|z| \leq 100$, the leaf $\Sigma = \Sigma_g(z,\omega)$ is a stable geodesic. In other words,
    \[\int_{\Sigma} |\nabla^\perp V|^2 - g(R(V,X)X,V)\dvol_{\Sigma} \geq 0\]
for every section $V$ of $N\Sigma$, where $X$ is a global unit tangent to $\Sigma$. We have 
    \[g(R(V,X)X,V) = K(X,V)|V|^2,\]
so since the sectional curvatures of $g$ are positive for $|z| \leq 110$, we conclude that $N\Sigma$ admits no nonzero global parallel sections.
\end{proof}

Now let $g$ be a metric with the property referred to in Lemma~\ref{example stable}. Let $g_j$ be a sequence of bumpy metrics which converges to $g$ in the $\RC^4$-norm. (A metric is called bumpy if all of its minimal submanifolds are nondegenerate critical points of the volume functional; this is a generic property by \cite{White_bumpy}.) For $j$ large enough, we have an associated QPMC foliation given by $\Sigma_{g_j}(z,\omega)$. Since $g_j$ is bumpy, for any open set $\Omega \subset \R^2$, there must be some $z \in \Omega$ such that $\Sigma_{g_j}(z,\omega)$ is not a geodesic---otherwise, we have a foliation by geodesics, each of which carries a nonzero Jacobi field (obtained from any variation through leaves) and is thus degenerate. So let $(z_j, \omega_j)$ be a sequence of points in $\{|z| \leq 90\}$ such that $\Sigma_j := \Sigma_{g_j}(z_j,\omega_j)$ is not a geodesic. After passing to a subsequence, $\Sigma_j$ converges to $\Sigma_g(z,\omega)$ for some $(z,\omega) \in \{|z| \leq 100\}$. By Lemma~\ref{example stable}, for sufficiently large $j$ we know that
    \[\ker(\Delta^\perp_{\Sigma_j}) = \ker(\Delta^\perp_{\Sigma_g(z,\omega)}) = \{0\},\]
so $N\Sigma_j$ admits no nonzero global parallel sections. But we know that the curvature vector of $\Sigma_j$ does not vanish identically, so it cannot be parallel for large $j$. To summarise, when $j$ is large the QPMC foliation for $g_j$ contains at least one leaf which has QPMC but not PMC. (In fact, our argument indicates that `most' leaves of the foliation for $g_j$ have QPMC but not PMC.)

\section{Bubblesheets in mean curvature flow}\label{section MCF}

In this section we indicate how bubblesheet regions arise along the mean curvature flow, and how our QPMC foliation can be used to canonically parameterize these regions. 

Consider a closed hypersurface $M_0 \subset \mathbb{R}^{n+1}$, and let $\{M_t\}_{t \in [0,T)}$ be the maximal smooth evolution of $M_0$ by the mean curvature flow. We write $\lambda_1 \leq \dots \leq \lambda_n$ for the principal curvatures of $M_t$, so that its (scalar) mean curvature is given by 
    \[H = \lambda_1 + \dots + \lambda_n.\]
We assume $M_0$ is $(m+1)$-convex for some $1 \leq m \leq n-1$, meaning that 
    \[\lambda_1 + \dots + \lambda_{m+1} > 0\]
everywhere on $M_0$. (When $m = n - 1$ this condition simply says that $M_0$ has positive mean curvature.) It follows that $M_t$ is uniformly $(m+1)$-convex for all $t \in [0,T)$, in the sense that 
    \[\min_{M_t} \, \frac{\lambda_1 + \dots + \lambda_{m+1}}{H} \geq \min_{M_0} \, \frac{\lambda_1 + \dots + \lambda_{m+1}}{H} >0.\]
The maximal time $T < \infty$ is characterized by 
    \[\lim_{t\to T}\bigg(\max_{M_t} H\bigg) = \infty.\]
For these results and further background see e.g. \cite[Chapter~9]{Andrews_etc}.

We recall a well-known variant of the Neck Detection Lemma for 2-convex solutions. In short, wherever the curvature of $M_t$ is large, a certain pointwise inequality for its principal curvatures at a single point is enough to ensure the existence of a large almost cylindrical region. For the proof, see \cite{Haslhofer--Kleiner}. 

\begin{proposition}\label{Bsheet detection}
Given $L > 0$ and $\varepsilon > 0$ there exist positive constants $\delta = \delta(n, M_0, L, \varepsilon)$ and $K = K(n, M_0, L, \varepsilon)$ with the following property. If $x \in M_t$ is such that $H(x,t) \geq K$ and 
   \[\frac{\lambda_1 + \dots + \lambda_{m}}{H}(x,t) \leq \delta\]
then the shifted and scaled hypersurface
    \[\frac{H(x,t)}{n-m}(M_t - x) \cap B(0,L)\]
is $\varepsilon$-close in $\RC^{[1/\varepsilon]}$ to (a rotation of) $\mathbb{R}^{m} \times \S^{n-m}$.
\end{proposition}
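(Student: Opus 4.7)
The plan is a contradiction-and-compactness argument in the spirit of the classical neck-detection lemma. Suppose the claim fails for some fixed $L$ and $\varepsilon$: there exist sequences $\delta_j \downarrow 0$, $K_j \uparrow \infty$, times $t_j \in [0,T)$, and points $x_j \in M_{t_j}$ with $H(x_j, t_j) \geq K_j$ and $(\lambda_1 + \dots + \lambda_m)(x_j, t_j) \leq \delta_j H(x_j, t_j)$, yet the rescaled hypersurfaces $\tilde M^j := \tfrac{H(x_j, t_j)}{n-m}(M_{t_j} - x_j)$ stay at $\RC^{[1/\varepsilon]}(B(0,L))$-distance at least $\varepsilon$ from every rotated copy of $\R^m \times \S^{n-m}$. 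First I would parabolically rescale each flow about $(x_j, t_j)$ at scale $\tfrac{n-m}{H(x_j, t_j)}$, producing a sequence of mean curvature flows with mean curvature normalized to $n-m$ at the spacetime origin and defined on ever-larger backward time intervals.

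Next, by the Haslhofer--Kleiner global convergence theorem (extended from the 2-convex setting to uniformly $(m+1)$-convex flows via the Hamilton--Ivey-type pinching and noncollapsing estimates available in this regime), a subsequence would converge smoothly on compact sets to a noncollapsed ancient smooth flow $\widetilde{\mathcal M}^\infty$ which remains uniformly $(m+1)$-convex. Crucially, the pointwise inequality $\lambda_1 + \dots + \lambda_m \geq -\delta_j H$ passes to the limit as $\lambda_1 + \dots + \lambda_m \geq 0$ everywhere, with equality attained at the spacetime origin. Applying Hamilton's strong maximum principle to the Weingarten operator, this forces $\widetilde{\mathcal M}^\infty$ to split isometrically as $\R^m \times N$, with $N$ an ancient noncollapsed MCF in $\R^{n-m+1}$; the splitting then implies that $N$ inherits uniform strict convexity from the uniform $(m+1)$-convexity of $\widetilde{\mathcal M}^\infty$.

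Finally, the classification of compact noncollapsed uniformly convex ancient solutions to MCF --- for which the shrinking round sphere is the unique example --- combined with the normalization $H = n - m$ at the origin identifies the time-zero slice of $N$ as the unit round sphere $\S^{n-m}$. Hence, up to a rotation, $\widetilde{\mathcal M}^\infty$ at time zero is $\R^m \times \S^{n-m}$, contradicting the supposed failure. The hard part lies in the last two steps, namely the smooth subsequential extraction and the classification of the ancient limit: for $m = n-1$ both are classical neck results, but for general $m$ they rely on the refined pinching and noncollapsing estimates for $(m+1)$-convex flows together with the classification of compact uniformly convex noncollapsed ancient MCF solutions, which is precisely what the cited reference supplies.
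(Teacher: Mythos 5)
Your contradiction-and-compactness argument matches the standard Neck Detection Lemma strategy and is essentially what the paper relies on (it cites Haslhofer--Kleiner for the proof, and the same outline appears in a commented-out proof sketch in the source): rescale at the basepoints, extract a noncollapsed ancient limit, use the convexity/cylindrical estimates to force $\lambda_1+\dots+\lambda_m \geq 0$ everywhere in the limit, observe equality at the origin from the hypothesis, split off an $\R^m$ factor by the strong maximum principle, and classify the cross-section as a round shrinking $\S^{n-m}$. One small imprecision worth flagging: the inequality you write, $\lambda_1+\dots+\lambda_m \geq -\delta_j H$, is not the contradiction hypothesis (which gives only the opposite inequality $\lambda_1+\dots+\lambda_m \leq \delta_j H$ at the single point $(x_j,t_j)$) --- the global lower bound comes from the Huisken--Sinestrari/Haslhofer--Kleiner convexity estimate for uniformly $(m+1)$-convex flows, with an error term independent of $\delta_j$ that vanishes after rescaling; also, the relevant MCF pinching result is more accurately called the convexity estimate rather than ``Hamilton--Ivey,'' which is a Ricci flow term.
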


Let us consider a fixed $\varepsilon > 0$ and $L \geq 10^6$. We define 
    \[\CC_t = \bigg\{x \in M_t :  H(x,t) \geq K  \text{ and } \frac{\lambda_1 + \dots + \lambda_m}{H}(x,t) \leq \delta\bigg\},\]
where $K$ and $\delta$ are as in Theorem~\ref{Bsheet detection}. For any $\varepsilon' >0$, if $\varepsilon$ is sufficiently small relative to $\varepsilon'$, then $\CC_t$ is $(\varepsilon', L/10, n-m)$-cylindrical in the sense of Definition~\ref{cylindrical region}.

By Theorem~\ref{main local}, $\CC_t$ is contained in an open set $\CB_t$ which admits a canonical foliation by approximately vertical $(n-m)$-spheres with QPMC. Using the fact that $\CB_t$ is also a hypersurface in $\mathbb{R}^{n+1}$, we can associate with it a core in $\R^{n+1}$, defined as follows. Writing $\Sigma(x)$ for the QPMC leaf passing through $x \in \CB_t$, we set 
    \[\Psi_t(x) := \frac{1}{|\Sigma(x)|} \int_{\Sigma(x)} y \dvoll_{\Sigma(x)}.\]
That is, $\Psi_t$ sends $\Sigma(x)$ to its center of mass in $\mathbb{R}^{n+1}$. If $\varepsilon$ is sufficiently small then $\Gamma_t := \Psi_t(\CB_t)$ is a smooth $m$-dimensional submanifold of $\mathbb{R}^{n+1}$, the map $\Psi_t$ is a submersion from $\CB_t$ to $\Gamma_t$, and the curvature of $\Gamma_t$ at $\Psi_t(x)$ is much smaller than $H(x,t)$. 

The region $\CB_t$ is completely described by its QPMC foliation, center-of-mass map $\Psi_t$ and core $\Gamma_t$. Together, these pieces of data play the role of Hamilton's normal form for necks (see Section~3 of \cite{Hamilton_PIC}). 

We intend for our normal form description of bubblesheet regions to find many applications. We give only one example here---a mechanism for determining the topology of $M_0$ in case the curvature of $M_t$ becomes large everywhere at the same time. Let us write $\Omega_t$ for the region enclosed by $M_t$.

\begin{proposition}\label{bubblesheet singularities}
Suppose $\CC_t = M_t$. Then $\Gamma_t$ is closed and $\Omega_t$ is smoothly isotopic to a tubular neighbourhood of $\Gamma_t$. In particular, $\Omega_0$ is isotopic to a tubular neighbourhood of a closed $m$-dimensional submanifold of $\mathbb{R}^{n+1}$. 
\end{proposition}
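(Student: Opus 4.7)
The first step is to observe that the hypothesis $\CC_t = M_t$ combined with the inclusions $\CC_t \subset \CB_t \subset M_t$ from Theorem~\ref{main local} forces $\CB_t = M_t$, so the QPMC foliation and the center-of-mass submersion $\Psi_t$ are defined globally on $M_t$. As the image of a smooth map from a compact manifold, $\Gamma_t$ is compact; as the image of a submersion from a manifold without boundary, $\Gamma_t$ is itself boundaryless. Hence $\Gamma_t$ is closed, and Ehresmann's fibration theorem turns $\Psi_t : M_t \to \Gamma_t$ into a smooth fiber bundle with fiber $\S^{n-m}$.

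The main task is then to show that $\Omega_t$ is diffeomorphic to a tubular neighbourhood of $\Gamma_t$ via an ambient isotopy in $\R^{n+1}$. The plan is to first promote the sphere bundle $\Psi_t : M_t \to \Gamma_t$ to a smooth disk bundle $\bar\Psi_t : \Omega_t \to \Gamma_t$ with fibers $D_y$ satisfying $\partial D_y = \Psi_t^{-1}(y)$. Locally, each cylindrical chart $F_x$ is $\RC^{[1/\varepsilon]}$-close to a standard isometric embedding of $\R^m \times \S^{n-m}$ into $\R^{n+1}$ as a round cylinder (modulo ambient rigid motions), so it extends to an embedding $\hat F_x$ of the solid cylinder $B^m(0,L) \times B^{n-m+1}(0,1)$ onto the component of $\Omega_t$ adjacent to $F_x(B^m(0,L)\times\S^{n-m})$. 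Under $\hat F_x$, the QPMC leaves through $F_x(B^m(0,90)\times\S^{n-m})$ are close to the vertical spheres $\{z\}\times\S^{n-m}$ and bound disks in $\Omega_t$ close to the vertical balls $\{z\}\times B^{n-m+1}$. The uniqueness portion of Theorem~\ref{main local} applied to the sphere boundaries, combined with the fact that a $\RC^2$-small perturbation of the round cylinder encloses a $\RC^1$-small perturbation of the solid cylinder, implies that these bounding disks are canonically determined; patching them across overlapping charts then yields a smooth submersion $\bar\Psi_t : \Omega_t \to \Gamma_t$ whose fibers are smooth $(n-m+1)$-disks.

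Finally, to promote this disk bundle to a tubular neighbourhood of $\Gamma_t$, observe that in each cylindrical chart the core $\Gamma_t$ is close to the axis $\R^m \times \{0\}$ and each $D_y$ is close to the vertical disk through $y$; hence $D_y$ is a small perturbation of the affine normal disk to $\Gamma_t$ at $y$ of radius comparable to $r(x)$ for any $x \in \Psi_t^{-1}(y)$. Compactness of $\Gamma_t$ and a partition of unity upgrade this pointwise approximation to a smooth bundle isomorphism from $\Omega_t$ onto the normal disk bundle of $\Gamma_t$ of an appropriate radius, and isotopy extension in $\R^{n+1}$ then integrates this isomorphism into the desired ambient smooth isotopy. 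The conclusion for $\Omega_0$ follows by composing with the smooth isotopy of $\R^{n+1}$ provided by the mean curvature flow over the interval $[0,t]$. I expect the hardest step to be the patching argument in the second paragraph: verifying that the bounding disks in overlapping cylindrical charts agree requires strengthening the uniqueness statement of Theorem~\ref{main local} from the QPMC leaves to the disks they bound, reflecting the fact that the disk bundle structure on $\Omega_t$ is canonically determined by the QPMC foliation of $M_t$ together with the ambient embedding in $\R^{n+1}$.
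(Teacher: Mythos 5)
Your argument is correct and reaches the conclusion via the same essential geometry as the paper: the QPMC leaves sit close to round spheres inside the affine normal subspaces of $\Gamma_t$, so a small deformation of the foliation turns $\Omega_t$ into a tubular neighbourhood. The difference is procedural. You first build an explicit disk bundle structure on $\Omega_t$ itself, by extending the cylindrical charts to solid cylinders and patching the bounding disks — precisely the step you flag at the end as the delicate one, since it asks the uniqueness statement to carry information not just about the QPMC leaves but about the disks they bound. The paper avoids this by working purely at the level of the boundary hypersurface $M_t = \partial \Omega_t$: each leaf $\Sigma(x)$ is deformed onto the round normal sphere of $\Gamma_t$ at $\Psi_t(x)$, smoothly in the base parameter, giving an isotopy of $M_t$ onto the boundary of a tubular neighbourhood; the isotopy extension theorem then carries the enclosed region $\Omega_t$ onto the tube. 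In that route the disk bundle structure on $\Omega_t$ is an output of the ambient isotopy rather than a structure that must be built and patched beforehand, which sidesteps the issue you were worried about.
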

\begin{proof}
We have $\CB_t = M_t$. Since $M_t$ is closed and $\Psi_t$ is a submersion, $\Gamma_t$ is closed. Provided $\varepsilon$ is sufficiently small, each leaf of the QPMC foliation $\Sigma(x)$ is almost round and almost lies in the affine $(n-m+1)$-dimensional subspace normal to $\Gamma_t$ at $\Psi_t(x)$. Performing a small deformation of each leaf we see that $\Omega_t$ is isotopic to a tubular neighbourhood of $\Gamma_t$. This proves the first claim. The second claim follows since $\Omega_0$ is isotopic to $\Omega_t$. 
\end{proof}

It is natural to ask which tubular neighbourhoods in $\R^{n+1}$ actually arise in the conclusion of Proposition~\ref{bubblesheet singularities}. There is reason to believe that all of them do, up to isotopy. Indeed, a sufficiently thin tubular neighbourhood of \emph{any} closed $m$-submanifold of $\R^{n+1}$ is $(m+1)$-convex. Flowing the boundary of such a thin tubular neighbourhood might, in many cases, lead to a singularity where the normal fibres pinch off everywhere simultaneously. 

\appendix

\section{First variation of \texorpdfstring{$\Delta^\perp$}{the normal Laplacian}}\label{appendix commutators}

Let $(M,g)$ be a Riemannian $n$-manifold, and let $\Sigma$ be a compact codimension-$k$ submanifold of $M$. As in Section~\ref{section variations}, we consider a smooth variation of $\Sigma$, given by a smooth family of submanifolds $\Sigma_s$ such that $\Sigma_0 = \Sigma$. We parameterize the variation by a family of embeddings
    \[
    F : \Sigma \times (-\varepsilon, \varepsilon) \to M
    \]
such that $F(\cdot, 0)$ is the inclusion $\Sigma \hookrightarrow M$. We also assume that $\tfrac{\partial F}{\partial s}(\cdot, 0)$ is normal to $\Sigma_0$ and define
    \[V := \frac{\partial F}{\partial s}(\cdot, 0).\]

Let $W = W(x,s)$ be a smoothly varying normal vector field on $\Sigma_s$. In this appendix we give an explicit expression for the commutator    
    \[\Lambda(V, W(\cdot, 0)) := (\nabla_s^\perp \Delta^\perp W - \Delta^\perp \nabla_s^\perp W)_{s = 0},\]
which appears in Proposition~\ref{var P}. 

We compute at a point $p$ in $\Sigma$, which we may assume lies at the origin in a system of geodesic normal coordinates $x^i$ on $\Sigma$. Let $X_i = \frac{\partial F}{\partial x^i}$.

\begin{lemma}\label{commutator_gradient}
At the point $p$, when $s = 0$ we have 
    \begin{align*}
        \nabla_{s}^\perp(\nabla_i^\perp W)&-\nabla_i^\perp(\nabla_s^\perp W)=g(W, A^j_i)\nabla_j^\perp V - g(W, \nabla^\perp_j V) A^j_i +(R(V, X_i)W)^\perp.
    \end{align*}
\end{lemma}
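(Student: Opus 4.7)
The strategy is to expand each of $\nabla_s^\perp(\nabla_i^\perp W)$ and $\nabla_i^\perp(\nabla_s^\perp W)$ in terms of the ambient Levi-Civita connection, so that the standard commutator relation
\[
\nabla_s \nabla_i W - \nabla_i \nabla_s W = R(V, X_i) W
\]
(valid along the map $F$ because $[\partial_s, \partial_{x^i}] = 0$) can be applied directly, and then to track the tangential and normal parts of the various difference terms using the identities already recorded in Section~\ref{notation section}.

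Concretely, I would fix geodesic normal coordinates for $h$ centred at $p$, so that $h_{ij}(p) = \delta_{ij}$ and $\partial_k h_{ij}(p) = 0$, and begin from the decomposition
\[
\nabla_i^\perp W = \nabla_i W + g(A_i^j, W) X_j.
\]
Differentiating this in $s$, using $\nabla_s X_j = \nabla_j V$ at $s = 0$ (again a consequence of $[\partial_s,\partial_{x^j}] = 0$), and replacing $\nabla_s \nabla_i W$ by $\nabla_i \nabla_s W + R(V,X_i)W$, the perpendicular projection gives
\[
\nabla_s^\perp(\nabla_i^\perp W) = (\nabla_i \nabla_s W)^\perp + (R(V, X_i) W)^\perp + g(W, A_i^j)\,\nabla_j^\perp V
\]
at $p$, because the $\partial_s\bigl(g(A_i^j,W)\bigr) X_j$ term is tangential and $(\nabla_j V)^\perp = \nabla_j^\perp V$ since $V$ is normal at $s=0$.

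The remaining task is to re-express $(\nabla_i \nabla_s W)^\perp$ in terms of $\nabla_i^\perp \nabla_s^\perp W$. Since $W$ is normal along $\Sigma_s$, differentiating $g(W, X_j) = 0$ in $s$ yields
\[
g(\nabla_s W, X_j)\big|_{s=0} = -g(W, \nabla_j^\perp V),
\]
and hence $(\nabla_s W)^\top = -h^{jk} g(W, \nabla_j^\perp V) X_k$. Differentiating this tangential piece in the direction $X_i$ at $p$, the derivatives of $h^{jk}$ and of $g(W,\nabla_j^\perp V)$ produce purely tangential contributions (invisible after projecting to the normal bundle), while $(\nabla_i X_k)^\perp = A_{ik}$ contributes $-g(W,\nabla_j^\perp V) A_i^j$ after using $h^{jk}A_{ik} = A_i^j$. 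Combining the resulting identity
\[
(\nabla_i \nabla_s W)^\perp = \nabla_i^\perp \nabla_s^\perp W - g(W, \nabla_j^\perp V) A_i^j
\]
with the formula above yields the claim after subtraction.

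The only real bookkeeping difficulty is the last step: I need to be careful to take the normal projection only after expanding all products, so that each occurrence of $\nabla_i X_k$ generates the correct second fundamental form term with the right index placement. Working at the origin of normal coordinates on $\Sigma$ is exactly what makes this tractable, since all Christoffel-symbol derivatives on $\Sigma$ vanish at $p$ and the surviving terms are precisely the three on the right-hand side of the lemma.
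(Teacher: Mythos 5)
Your proof is correct and follows essentially the same strategy as the paper: expand the normal covariant derivatives in terms of ambient ones, apply the curvature commutation relation $\nabla_s\nabla_i W - \nabla_i\nabla_s W = R(V,X_i)W$, and track the second-fundamental-form contributions coming from the tangential parts of $\nabla_s X_j$, $\nabla_s W$, and $\nabla_{X_i}X_k$. Your bookkeeping via the identity $\nabla_i^\perp W = \nabla_{X_i} W + g(A_i^j,W)X_j$ and the splitting of $\nabla_s W$ is a bit more direct than the paper's, which introduces auxiliary normal and ambient frames $N_a$, $E_\alpha$; note also that the geodesic normal coordinates you invoke are not actually needed here (the derivatives of $h^{jk}$ multiply the tangential vector $X_k$ and so vanish after projection regardless), though they do matter for the companion Laplacian computation.
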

\begin{proof}
Let us fix a smooth orthonormal frame $N_a = N_a(s)$ for the normal space to $\Sigma_s$ at $F(p,s)$. We may assume that each $N_a$ satisfies 
    \[\nabla_s^\perp N_a = 0.\]
It then follows that
    \[\nabla_s N_a = (\nabla_s N_a)^\top = -g(N_a, \nabla_j^\perp V) h^{jk} X_k 
    \]
when $s = 0$. We may write
    \[\nabla_i^\perp W = \sum_{a} g(\nabla_{X_i} W, N_a) N_a.\]
Consequently, when $s = 0$ we have
 \begin{equation}   
    \begin{aligned}\label{commutator_gradient_1}
        \nabla_s^\perp(\nabla_i^\perp W) &= \bigg(\frac{\partial}{\partial s} \sum_{a} g(\nabla_{X_i} W, N_a) N_a\bigg)^\perp \\
        &= \bigg(\frac{\partial}{\partial s} \nabla_{X_i} W\bigg)^\perp + \sum_a g(\nabla_{X_i} W, \nabla_s N_a) N_a \\
        &= \bigg(\frac{\partial}{\partial s} \nabla_{X_i} W\bigg)^\perp + g(W, A_i^j)\nabla_j^\perp V.
    \end{aligned}
\end{equation}
We now compute the first term on the right-hand side of \eqref{commutator_gradient_1}. Let us fix a frame $E_\alpha$ for $TM$ in a neighbourhood of $p$ such that $\nabla_{E_\alpha} E_\beta = 0$ at $p$. We have 
    \begin{align*}
        \nabla_{X_i} W = \frac{\partial W}{\partial x^i} +  W^\alpha \nabla_{X_i} E_\alpha,
    \end{align*}
and hence when $s = 0$
    \begin{align*}
        \frac{\partial}{\partial s} \nabla_{X_i} W &= \frac{\partial}{\partial x^i} \frac{\partial W}{\partial s} + W^\alpha  \nabla_{\frac{\partial F}{\partial s}}  \nabla_{X_i} E_\alpha.
    \end{align*}
Inserting 
    \begin{align*}
        \frac{\partial W}{\partial s} &= \nabla_s W -  W^\alpha  \nabla_{\frac{\partial F}{\partial s}} E_\alpha\\
        &= \nabla_s^\perp W - g(W, \nabla_j^\perp V) h^{jk} X_k -  W^\alpha \nabla_{\frac{\partial F}{\partial s}} E_\alpha,
    \end{align*}
and projecting onto the normal space, we obtain 
    \begin{align*}
        \bigg(\frac{\partial}{\partial s} \nabla_{X_i} W\bigg)^\perp &= \nabla_i^\perp (\nabla_s^\perp W) - g(W, \nabla^\perp_j V) A^j_i +  W^\alpha( \nabla_{\frac{\partial F}{\partial s}} \nabla_{X_i} E_\alpha - \nabla_{X_i} \nabla_{\frac{\partial F}{\partial s}} E_\alpha)^\perp\\
        &= \nabla_i^\perp (\nabla_s^\perp W) - g(W, \nabla^\perp_j V) A^j_i +(R(V, X_i)W)^\perp.
    \end{align*}
Combining this with \eqref{commutator_gradient_1} gives the claim. 
\end{proof}

We now use Lemma~\ref{commutator_gradient} to compute $\Lambda(V,W(\cdot,0))$.

\begin{proposition}\label{commutator_Laplacian prop} 
At the point $p$, when $s = 0$ we have 
    \begin{align}\label{commutator_Laplacian eq}
        &\Lambda(V,W(\cdot,0)) \\
        &= 2g(V,A^{ij})\nabla_i^\perp \nabla_j^\perp W\\
        &+h^{ij}\nabla_i^\perp\Big(g(W, A_j^k)\nabla_k^\perp V - g(W, \nabla_k^\perp V) A_j^k + (R(V, X_j)W)^\perp\Big) \notag\\
        &+h^{ij}\Big(g(\nabla_j^\perp W, A_i^k) \nabla_k^\perp V - g(\nabla_j^\perp W, \nabla_k^\perp V)A_i^k + (R(V, X_i)\nabla_j^\perp W)^\perp\Big) \notag\\
        &+h^{ij}\Big(2g(\nabla_i^\perp A_j^k,V) + 2g(A_i^k, \nabla^\perp_j V)\Big)\nabla_k^\perp W \\
        &-h^{ij}\Big(g(\nabla_i^\perp H, V) + g(H, \nabla_i^\perp V)\Big)\nabla^\perp_j W.\notag
    \end{align}
\end{proposition}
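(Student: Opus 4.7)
The plan is to compute $\nabla_s^\perp \Delta^\perp W$ at $s = 0$ using the coordinate formula
\[
\Delta^\perp W = h^{ij}\bigl(\nabla_i^\perp \nabla_j^\perp W - \Gamma_{ij}^k(h)\,\nabla_k^\perp W\bigr),
\]
evaluate the $s$-derivative at the point $p$ (where the coordinates $x^i$ are geodesic normal), and then subtract $\Delta^\perp \nabla_s^\perp W = h^{ij}\nabla_i^\perp \nabla_j^\perp \nabla_s^\perp W$. Splitting by the product rule gives four contributions:
\[
(\partial_s h^{ij})\nabla_i^\perp \nabla_j^\perp W \ + \ h^{ij}\nabla_s^\perp(\nabla_i^\perp \nabla_j^\perp W) \ - \ h^{ij}(\partial_s \Gamma_{ij}^k)\,\nabla_k^\perp W \ - \ h^{ij}\Gamma_{ij}^k\,\nabla_s^\perp \nabla_k^\perp W,
\]
and the last term vanishes at $p$ since $\Gamma_{ij}^k(h) = 0$ there.

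First I would compute the metric variation. Since $V$ is normal and $\nabla_s X_i = \nabla_i V$ (torsion-freeness of $\nabla$), one obtains $\partial_s h_{ij}|_{s=0} = -2g(V, A_{ij})$, hence $\partial_s h^{ij}|_{s=0} = 2g(V, A^{ij})$. This accounts directly for the term $2g(V, A^{ij})\nabla_i^\perp \nabla_j^\perp W$. Next I would iterate Lemma~\ref{commutator_gradient} to move $\nabla_s^\perp$ past the two normal derivatives:
\[
\nabla_s^\perp \nabla_j^\perp W = \nabla_j^\perp \nabla_s^\perp W + g(W,A_j^k)\nabla_k^\perp V - g(W,\nabla_k^\perp V)A_j^k + (R(V,X_j)W)^\perp,
\]
and likewise for the outer $\nabla_i^\perp$ applied to $\nabla_j^\perp W$. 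Contracting with $h^{ij}$ and subtracting $\Delta^\perp \nabla_s^\perp W$ produces precisely the second and third lines of \eqref{commutator_Laplacian eq}: the outer $\nabla_i^\perp$ hits the commutator term coming from the inner commutation, while the pure outer commutator acts on $\nabla_j^\perp W$.

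The remaining task is the Christoffel contribution $-h^{ij}(\partial_s \Gamma_{ij}^k)\nabla_k^\perp W$. Using the standard intrinsic formula
\[
\partial_s \Gamma_{ij}^k = \tfrac12 h^{kl}\bigl(\nabla_i^\top \dot h_{jl} + \nabla_j^\top \dot h_{il} - \nabla_l^\top \dot h_{ij}\bigr)
\]
with $\dot h_{ij} = -2g(V,A_{ij})$, together with the identity $X_i\,g(V,A_{jl}) = g(\nabla_i^\perp V, A_{jl}) + g(V, \nabla_i^\perp A_{jl})$ (valid at $p$ in normal coordinates, since the tangential parts of $\nabla_{X_i}V$ pair to zero against normal vectors), one obtains at $p$
\[
\partial_s \Gamma_{ij}^k = -\bigl[g(\nabla_i^\perp V, A_j^k) + g(V,\nabla_i^\perp A_j^k) + g(\nabla_j^\perp V, A_i^k) + g(V,\nabla_j^\perp A_i^k) - h^{kl}g(\nabla_l^\perp V, A_{ij}) - h^{kl}g(V, \nabla_l^\perp A_{ij})\bigr].
\]
Contracting against $-h^{ij}\nabla_k^\perp W$ and using the symmetry $h^{ij} = h^{ji}$ together with $h^{ij}A_{ij} = H$ (and the compatibility $\nabla^\perp h^{ij} = 0$ as a scalar for $N\Sigma$), the first four bracketed terms combine into the fourth line of \eqref{commutator_Laplacian eq}, namely $h^{ij}(2g(\nabla_i^\perp A_j^k, V) + 2g(A_i^k, \nabla_j^\perp V))\nabla_k^\perp W$, while the last two combine into the final line $-h^{ij}(g(\nabla_i^\perp H, V) + g(H,\nabla_i^\perp V))\nabla_j^\perp W$ after relabelling indices.

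The main obstacle is purely bookkeeping: keeping the tangential vs.\ normal projections straight in the variation of the Christoffel symbols, and correctly folding the two applications of Lemma~\ref{commutator_gradient} so that all index symmetries line up. Once the $\partial_s h^{ij}$ and $\partial_s \Gamma_{ij}^k$ contributions are assembled and the iterated commutator is expanded, the five lines of the asserted formula drop out directly, completing the proof.
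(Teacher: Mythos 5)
Your proof is correct and follows essentially the same route as the paper: differentiate the coordinate formula for $\Delta^\perp$ at $p$ in normal coordinates, extract $\partial_s h^{ij}\big|_{s=0}=2g(V,A^{ij})$, apply Lemma~\ref{commutator_gradient} twice for the $h^{ij}\nabla_s^\perp\nabla_i^\perp\nabla_j^\perp W$ term, and handle the Christoffel variation via $\dot h_{ij}=-2g(V,A_{ij})$. The only cosmetic difference is that you invoke the intrinsic formula $\partial_s\Gamma_{ij}^k=\tfrac12 h^{kl}(\nabla_i^\top\dot h_{jl}+\nabla_j^\top\dot h_{il}-\nabla_l^\top\dot h_{ij})$, whereas the paper differentiates the local expression $\nabla_i^\top X_j^k=\tfrac12 h^{kl}(X_ih_{jl}+X_jh_{il}-X_lh_{ij})$ directly; at $p$ these coincide, so the computation is the same.
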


\begin{proof}
First note that when $s = 0$ we have 
    \[\nabla_s^\perp(\Delta^\perp W) = 2g(V,A^{ij})\nabla_i^\perp \nabla_j^\perp W + h^{ij} \nabla_s^\perp(\nabla_i^\perp(\nabla_j^\perp W)) - h^{ij} \frac{\partial}{\partial s} (\nabla_i^\top X_j^k) \nabla^\perp_k W.\]
We apply Lemma~\ref{commutator_gradient} twice to rewrite the term 
    \[h^{ij} \nabla_s^\perp(\nabla_i^\perp(\nabla_j^\perp W))\]
The result is $\Delta^\perp \nabla_s^\perp W$ plus the second and third lines on the right-hand side of \eqref{commutator_Laplacian eq}. Next we rewrite  
    \[- h^{ij} \frac{\partial}{\partial s} (\nabla_i^\top X_j^k) \nabla^\perp_k W\]
using
    \[\nabla_i^\top X_j^k = \frac{1}{2}h^{kl}(X_i(h_{jl}) + X_j(h_{il}) - X_l(h_{ij}))\]
and 
    \[\frac{\partial h_{ij}}{\partial s} = -2g(A_{ij}, \tfrac{\partial F}{\partial s}^\perp).\]
This leads to the final two lines on the right-hand side of \eqref{commutator_Laplacian eq}. 
\end{proof}

\bibliographystyle{alpha}
\bibliography{references}

\end{document}